\DeclareMathOperator*{\argmin}{\arg\!\min}
\renewcommand{\subsection}{%
  \@startsection{subsection}
    {2}
    {\z@}
    {-21dd plus-8pt minus-4pt}
    {10.5dd}
    {\normalsize\bfseries\boldmath}%
}
\newcommand{\vertiii}[1]{{\left\vert\kern-0.25ex\left\vert\kern-0.25ex\left\vert #1 \right\vert\kern-0.25ex\right\vert\kern-0.25ex\right\vert}}
\begin{document}
\title{A Distributed Algorithm for High-Dimension Convex Quadratically Constrained Quadratic Programs}
%
\titlerunning{A Distributed Algorithm for High-Dimension Convex QCQPs}        
\author{Run~Chen$^{1}$ \and Andrew~L.~Liu$^{1}$ 
}
\authorrunning{R. Chen \and A.~L.~Liu} 
\institute{Run~Chen \at
              \email{BigRunTheory@gmail.com} \\ 
           \and
           Andrew L.~Liu \at
              \email{andrewliu@purdue.edu} \\
           \and
           $^{1}$ \at
              School of Industrial Engineering, Purdue University, West Lafayette, IN 47906, USA
}
\date{Received: date / Accepted: date}
%
%
\maketitle
\begin{abstract}
We propose a Jacobi-style distributed algorithm to solve convex, quadratically constrained quadratic programs (QCQPs), which arise from a broad range of applications. While small to medium-sized convex QCQPs can be solved efficiently by interior-point algorithms, high-dimension problems pose significant challenges to traditional algorithms that are mainly designed to be implemented on a single computing unit. 
The exploding volume of data (and hence, the problem size), however, may overwhelm any such units. In this paper, we propose a distributed algorithm for general, non-separable, high-dimension convex QCQPs, using a novel idea of predictor-corrector primal-dual update with an adaptive step size. The algorithm enables distributed storage of data as well as parallel, distributed computing. We establish the conditions for the proposed algorithm to converge to a global optimum, and implement our algorithm on a computer cluster with multiple nodes using Message Passing Interface (MPI). The numerical experiments are conducted on data sets of various scales from different applications, and the results show that our algorithm exhibits favorable scalability for solving high-dimension problems. 
\keywords{Convex QCQP  \and Distributed algorithm  \and Proximal method \and Parallel computing }
\end{abstract}
%
%
%
%
\section{Introduction}
In this paper, we consider the following constrained optimization problem:
\begin{equation}\label{eq: Standard QCQP Problem Form}
\begin{aligned}
\underset{\mathbf{x} \in \mathbb{R}^{n_1}}{\text{minimize}} \quad &\frac{1}{2} \mathbf{x}^{T} P_0 \mathbf{x} + \mathbf{q}_0^T \mathbf{x} + r_0 \\
\text{subject to} \quad &\frac{1}{2} \mathbf{x}^{T} P_i \mathbf{x} + \mathbf{q}_i^T \mathbf{x} + r_i \leq 0, \quad i = 1, \dots, m_1,
\end{aligned}
\end{equation}
where $P_i \in \mathbb{R}^{n_1 \times n_1}$, $\mathbf{q}_i \in \mathbb{R}^{n_1}$, and $r_i \in \mathbb{R}$ for $i = 0, 1, \dots, m_1$ are all given. Such a problem is referred to as a quadratically constrained quadratic program (QCQP). (Note that  linear constraints are included with $P_i = \mathbf{0}$, a matrix of all 0's, for some $i$.) If additionally, $P_0, P_1, \dots, P_{m_1}$ are all  positive semidefinite (PSD) matrices, then the problem is convex.
Convex QCQPs arise from a wide range of application areas, including multiple kernel learning \cite{lanckriet2004learning}, ranking recommendations \cite{chatterjee2018constrained}, signal processing \cite{SignalProcessing_QCQP}, radar applications \cite{rabaste2015mismatched}, computer vision \cite{ComputerVision_QCQP}, and electric power system operation \cite{Low_QCQP}, to name a few. Small to medium-sized convex QCQPs can be solved efficiently by the well-established interior-point method (IPM) \cite{nesterov1994interior}, which has polynomial running time for solving convex optimization problems. However, in order to write out the barrier function in the IPM for the feasible domain of a QCQP, decomposition of matrices $P_i = F_i^T F_i$ for $i = 1, \dots, m_1$ is usually required \cite{nemirovski2004interior}, which may not be readily available through the input data. For example, in kernel-based learning applications, each quadratic constraint comprises a kernel matrix, whose components are directly defined by a kernel function: $K_{jj^{\prime}} = k(\mathbf{x}_j, \mathbf{x}_{j^{\prime}})$. The operations to obtain a matrix decomposition, such as through Cholesky decomposition,  typically have  computational complexity of $O(n^3)$, which could become very costly as the size of the matrices grows. When the dimension of the QCQPs increases dramatically due to huge amount of data, or when the data just cannot be all stored in a central location, a centralized algorithm, such as the IPM, may no longer be applicable. This directly motivates the proposed algorithm in this paper, which not only does not require any matrix decomposition , but also facilitates distributed storage of data to achieve memory efficiency and enables parallel computing even for QCQPs of non-separable constraints.
%
\par In addition to being a typical optimization problem, a convex QCQP is also a special instance of a second-order cone program (SOCP), which is in turn a special form of semi-definite program (SDP) \cite{lobo1998applications}. When using commercial solvers, such as CPLEX, to solve a convex QCQP, it is usually transformed into an SOCP through preprocessing \cite{manual1987ibm}, and then a barrier-method-based optimizer is applied. 
To solve large-scale conic programs, \cite{o2016conic} applies an operator splitting method (such as the well-known alternating direction method of multipliers, or ADMM) to the homogeneous self-dual embedding, which is an equivalent convex feasibility problem involving finding a nonzero point in the intersection of a subspace and a cone. There are also ADMM-based distributed algorithms for solving large-scale SDPs proposed in \cite{kalbat2015fast,pakazad2018distributed}; but they can only be applied to a class of decomposable SDPs with special graph representations (chordal graphs, for example). To either translate a convex QCQP to a standard SOCP or use the Schur Complement to rewrite each quadratic inequality as a linear matrix inequality (LMI) and hence translate a convex QCQP to an SDP , however, calls for matrix decomposition: $P_i = F_i^T F_i$ for $i = 1, \dots, m_1$. As mentioned before, such operations can be very expensive for large-scale matrices. 
There is another ADMM-based distributed algorithm that decomposes a general QCQP with $m$ constraints  into $m$ single-constrained QCQPs using a reformulated consensus optimization form \cite{huang2016consensus}. However, even the size of the single-constrained QCQP can be very large in many applications, which may still need further decomposition, making the overall algorithm's efficiency in doubt. There is also a recent approach to transform quadratic constraints into linear constraints by sampling techniques and then to apply ADMM-based algorithms to solve the resulting high-dimension quadratic programs (QPs) \cite{basu2017large}. This approach is studied only for QCQPs with all matrices being positive definite (PD), and all the test problems shown in \cite{basu2017large} are of a single constraint. How would the sampling approach perform with PSD matrices in the constraints or with multiple quadratic constraints is unknown.
%
%
\par To overcome the above-mentioned limitations of the existing algorithms, we propose a novel first-order distributed algorithm, which decomposes a convex QCQP by a method inspired by the idea of the predictor corrector proximal multiplier method (PCPM) \cite{chen1994proximal}. The advantages of our algorithm include the following: (i) non-separable, quadratic functions can become naturally separable after introducing the so-called predictor and corrector variables for both primal and dual variables, which greatly facilitates distributed computing (with Jacobi-style parallel updating, as opposed to Guass-Seidel style sequential updating); while ADMM-type algorithms cannot be directly applied to QCQPs without separable constraints; (ii)  both the primal/dual predictor variables and corrector variables can be updated component-wise, making the method well-suited for massively parallel computing, and each $n$-by-$n$ Hessian matrix can be stored column-wise in distributed computing units; (iii) no matrix decomposition or inversion is needed. 
\par Convergence of our algorithm to an optimal solution will be shown, along with various numerical results. We first test the algorithm on solving standard QCQPs with randomly generated data sets of different scales, and then apply it to solve high-dimension multiple kernel learning problems. Numerical experiments are conducted on a multi-node computer cluster through message passing interface (MPI), and multiple nodes are used to highlight the benefits of distributed implementation of our algorithm. Numerical results are compared with those obtained from the commercial solver CPLEX (version 12.8.0, using the barrier optimizer). The comparison will show that our algorithm can scale to very large problems at the cost of consuming more cheap iterations to reach a higher accuracy. With a modest accuracy, our algorithm exhibits favorable scalability for solving high-dimension QCQPs when CPLEX fails to provide a solution due to memory limit or other issues. 
%
\par The remainder of the paper is organized as follows. In Section 2, we briefly summarize the original PCPM algorithm and highlight the novel idea in our proposed algorithm. Section 3 provides convergence analyses of the algorithm, followed by discussions on how to implement the algorithm in a distributed framework in Section 4. Numerical performance of various testing problems is reported in Section 5. Finally, we conclude with some discussions in Section 6.
%
%
%
%
\section{Algorithm Description}
\subsection{PCPM Algorithm}
To present our distributed algorithm, we first briefly describe the original PCPM algorithm \cite{chen1994proximal} to make this paper self-contained. For this purpose, it suffices to consider a 2-block linearly constrained convex optimization problem:  
\begin{equation}\label{eq: 2_block CP Problem Form}
\begin{aligned}
\underset{\mathbf{x}_1 \in \mathbb{R}^{n_1},\ \mathbf{x}_2 \in \mathbb{R}^{n_2}}{\text{minimize}} \quad &f_1(\mathbf{x}_1) + f_2(\mathbf{x}_2) \\
\text{subject to} \quad &A_1 \mathbf{x}_1 + A_2 \mathbf{x}_2 = \mathbf{b}, \quad (\bm{\lambda})
\end{aligned}
\end{equation}
where $f_1: \mathbb{R}^{n_1} \to (-\infty, +\infty]$ and $f_2: \mathbb{R}^{n_2} \to (-\infty, +\infty]$ are closed proper convex functions, $A_1 \in \mathbb{R}^{m \times n_1}$ and $A_2 \in \mathbb{R}^{m \times n_2}$ are full row-rank matrices, $\mathbf{b} \in \mathbb{R}^m$ is a given vector, and $\bm{\lambda} \in \mathbb{R}^m$ is the corresponding  Lagrangian multiplier associated with the linear equality constraint. The classic Lagrangian function $\mathcal{L}: \mathbb{R}^{n_1} \times \mathbb{R}^{n_2} \times \mathbb{R}^m \to \mathbb{R}$ is defined as:
\begin{equation}\label{eq:L}
\mathcal{L}(\mathbf{x}_1, \mathbf{x}_2, \bm{\lambda}) = f_1(\mathbf{x}_1) + f_2(\mathbf{x}_2) + \bm{\lambda}^T (A_1 \mathbf{x}_1 + A_2 \mathbf{x}_2 - \mathbf{b}).  
\end{equation}
It is well-known that for a convex problem of the specific form in \eqref{eq: 2_block CP Problem Form} (where the linear constraint qualification automatically holds), finding an optimal solution is equivalent to finding a saddle point $(\mathbf{x}_1^*, \mathbf{x}_2^*, \bm{\lambda}^*)$ such that $\mathcal{L}(\mathbf{x}_1^*, \mathbf{x}_2^*, \bm{\lambda}) \leq \mathcal{L}(\mathbf{x}_1^*, \mathbf{x}_2^*, \bm{\lambda}^*) \leq \mathcal{L}(\mathbf{x}_1, \mathbf{x}_2, \bm{\lambda}^*)$. To find such a saddle point, a simple dual decomposition algorithm can be applied to $\mathcal{L}(\mathbf{x}_1, \mathbf{x}_2, \bm{\lambda})$. More specifically, at each iteration $k$, given a fixed Lagrangian multiplier $\bm{\lambda}^k$, the primal decision variables $(\mathbf{x}_1^{k+1}, \mathbf{x}_2^{k+1})$ can be  obtained, in parallel, by minimizing $\mathcal{L}(\mathbf{x}_1, \mathbf{x}_2, \bm{\lambda}^k)$. Then a dual update $\bm{\lambda}^{k+1} = \bm{\lambda}^k + \rho (A_1 \mathbf{x}_1^{k+1} + A_2 \mathbf{x}_2^{k+1} - \mathbf{}b)$ is performed. 
\par While the above algorithmic idea is simple, it is well-known that convergence cannot be established without more restrictive assumptions, such as strict convexity of $f_1$ and $f_2$ (e.g., Theorem~26.3 in \cite{rockafellar2015convex}). One approach to overcome such difficulties is the proximal point algorithm, which obtains $(\mathbf{x}_1^{k+1},\mathbf{x}_2^{k+1})$  by minimizing the proximal augmented Lagrangian function defined as $\mathcal{L}_{\rho}(\mathbf{x}_1, \mathbf{x}_2, \bm{\lambda}^k) := \mathcal{L}(\mathbf{x}_1, \mathbf{x}_2, \bm{\lambda}^k) + \frac{\rho}{2} \lVert A_1 \mathbf{x}_1 + A_2 \mathbf{x}_2 - \mathbf{b} \rVert_2^2 + \frac{1}{2 \rho} \lVert \mathbf{x}_1 - \mathbf{x}_1^k \rVert_2^2 + \frac{1}{2 \rho} \lVert \mathbf{x}_2 - \mathbf{x}_2^k \rVert_2^2$. The parameter $\rho$ is given, which determines the step-size for updating both primal and dual variables in each iteration, and plays a key role in the convergence of the overall algorithm.  The primal minimization step now becomes (with the Lagrangian function $\mathcal{L}(\mathbf{x}_1, \mathbf{x}_2, \bm{\lambda}^k)$ explicitly written out in the form of Equation \eqref{eq:L}):
\begingroup
\begin{align}
(\mathbf{x}_1^{k+1}, \mathbf{x}_2^{k+1}) = \underset{\mathbf{x}_1 \in \mathbb{R}^{n_1}, \mathbf{x}_2 \in \mathbb{R}^{n_2}}{\argmin} &f_1(\mathbf{x}_1) + f_2(\mathbf{x}_2) + (\bm{\lambda}^k)^T (A_1 \mathbf{x}_1 + A_2 \mathbf{x}_2 - \mathbf{b}) \nonumber \\
+ &\frac{\rho}{2} \lVert A_1 \mathbf{x}_1 + A_2 \mathbf{x}_2 - \mathbf{b} \rVert_2^2 \nonumber \\
+ &\frac{1}{2 \rho} \lVert \mathbf{x}_1 - \mathbf{x}_1^k \rVert_2^2 + \frac{1}{2 \rho} \lVert \mathbf{x}_2 - \mathbf{x}_2^k \rVert_2^2. \label{eq: proximal point primal minimization}
\end{align}
\endgroup
With \eqref{eq: proximal point primal minimization}, however, $\mathbf{x}_1^{k+1}$ and $\mathbf{x}_2^{k+1}$ can no longer be obtained in parallel due to the augmented term $\lVert A_1 \mathbf{x}_1 + A_2 \mathbf{x}_2 - \mathbf{b} \rVert_2^2$. To overcome this difficulty, the PCPM algorithm introduces a predictor variable $\bm{\mu}^{k+1}$:
\begin{equation}\label{eq:PCPM_Predictor}
\bm{\mu}^{k+1} \coloneqq \bm{\lambda}^k + \rho (A_1 \mathbf{x}_1^k + A_2 \mathbf{x}_2^k - \mathbf{b}).
\end{equation}
Using the predictor variable, the optimization in \eqref{eq: proximal point primal minimization} can be approximated as:
\begingroup
\begin{align}
(\mathbf{x}_1^{k+1}, \mathbf{x}_2^{k+1}) = \underset{\mathbf{x}_1 \in \mathbb{R}^{n_1}, \mathbf{x}_2 \in \mathbb{R}^{n_2}}{\argmin} &f_1(\mathbf{x}_1) + f_2(\mathbf{x}_2) + (\bm{\mu}^{k+1})^T (A_1 \mathbf{x}_1 + A_2 \mathbf{x}_2 - \mathbf{b}) \nonumber \\
+ &\frac{1}{2 \rho} \lVert \mathbf{x}_1 - \mathbf{x}_1^k \rVert_2^2 + \frac{1}{2 \rho} \lVert \mathbf{x}_2 - \mathbf{x}_2^k \rVert_2^2, \label{eq: PCPM primal minimization}
\end{align}
\endgroup
which allows $\mathbf{x}_1^{k+1}$ and $\mathbf{x}_2^{k+1}$ to be obtained in parallel again. After solving \eqref{eq: PCPM primal minimization}, the PCPM algorithm updates the dual variable as follows: 
\begin{equation}\label{eq:PCPM_Corrector}
\bm{\lambda}^{k+1} = \bm{\lambda}^k + \rho (A_1 \mathbf{x}_1^{k+1} + A_2 \mathbf{x}_2^{k+1} - \mathbf{b}),
\end{equation} 
which is referred to as a corrector update. 
\subsection{A Distributed Algorithm for High-Dimension Convex QCQPs}
Now consider a convex QCQP problem in the following form:
\begin{equation}\label{eq: QCQP Problem Form}
\begin{aligned}
\underset{\mathbf{x} \in \mathbb{X},\ \mathbf{u} \in \mathbb{R}^{n_2}}{\text{minimize}} \quad &\frac{1}{2} \mathbf{x}^{T} P_0 \mathbf{x} + \mathbf{q}_0^T \mathbf{x} + \mathbf{c}_0^T \mathbf{u} + r_0 \\
\text{subject to} \quad &\frac{1}{2} \mathbf{x}^{T} P_i \mathbf{x} + \mathbf{q}_i^T \mathbf{x} + \mathbf{c}_i^T \mathbf{u} + r_i \leq 0, \quad i = 1, \dots, m_1, \qquad (\lambda_i) \\
&A \mathbf{x} + B \mathbf{u} = \mathbf{b}, \qquad (\bm{\gamma}) 
\end{aligned}
\end{equation}
where  $P_i \in \mathbb{R}^{n_1 \times n_1}$, $\mathbf{q}_i \in \mathbb{R}^{n_1}$, $r_i \in \mathbb{R}$ for $i = 0, 1, \dots, m_1$, $A \in \mathbb{R}^{m_2 \times n_1}$, $B \in \mathbb{R}^{m_2 \times n_2}$ and $\mathbf{b} \in \mathbb{R}^{m_2}$ are all given. Note that we introduce a new variable $\mathbf{u} \in \mathbb{R}^{n_2}$ to explicitly write out the linear-only terms $\mathbf{c}_i^T \mathbf{u}$ with coefficients $\mathbf{c}_i \in \mathbb{R}^{n_2}$ for $i = 0, 1, \dots, m_1$, and also write out a linear equality constraint $A \mathbf{x} + B \mathbf{u} = \mathbf{b}$ separately. 
While $\mathbb{X} = \prod_{j=1}^{n^{\prime}} \mathbb{X}_j \subset \mathbb{R}^{n_1}$ and each $\mathbb{X}_j \subset \mathbb{R}^{n_j}$ can be any closed and convex set in $\mathbb{R}^{n_j}$ with $\sum_{j=1}^{n^{\prime}} n_j = n_1$, we consider specifically the one-dimension box constraint here; that is $\mathbb{X}_j = \{x_j \in \mathbb{R}^1 | 0 \leq x_j \leq \bar{X}_j\}$ for $j = 1, \dots, n_1$.
\par The specific QCQP formulation in \eqref{eq: QCQP Problem Form} is not more general than the standard form \eqref{eq: Standard QCQP Problem Form}. The reason that we write out a QCQP in this specific form is to emphasize the fact that when dealing with QCQPs with linear constraints (including box constraints), our algorithm does not require the problem to be reformulated into the standard form in \eqref{eq: Standard QCQP Problem Form}. This can be convenient from implementation perspective, as several applications, including multiple kernel learning, naturally lead to a QCQP in the form of \eqref{eq: QCQP Problem Form}. 
\par To avoid technical difficulties, we make the blanket assumption throughout this paper that the Slater's constraint qualification (CQ) holds. Consequently, if an optimal solution exists of \eqref{eq: QCQP Problem Form}, then there always exists a corresponding Lagrangian multiplier $(\bm{\lambda}, \bm{\gamma}) = (\lambda_1 \cdots \lambda_{m_1}, \gamma_1 \cdots \gamma_{m_2})^T$. To apply the PCPM algorithm to the QCQP in \eqref{eq: QCQP Problem Form}, at each iteration $k$, with a given primal-dual pair $(\mathbf{x}^k, \mathbf{u}^k, \bm{\lambda}^k, \bm{\gamma}^k)$, we start with a dual predictor update:
\begingroup
\begin{align}
&\bullet \enspace \text{dual predictor}: \nonumber \\
&\mu_i^{k+1} = \Pi_{\mathbb{R}_+}\Big(\lambda_i^k + \rho \big[\frac{1}{2} (\mathbf{x}^k)^T P_i \mathbf{x}^k + \mathbf{q}_i^T \mathbf{x}^k + \mathbf{c}_i^T \mathbf{u}^k + r_i\big]\Big), \quad i = 1, \dots, m_1, \nonumber \\
&\nu_i^{k+1} = \gamma_i^k + \rho \big[A \mathbf{x}^k + B \mathbf{u}^k - \mathbf{b}\big]_i, \quad i = 1, \dots, m_2, \label{eq: dual predictor update}
\end{align}
\endgroup
where $\Pi_{\mathbb{Z}}(\mathbf{z})$ denotes the projection of a vector $\mathbf{z} \in \mathbb{R}^n$ onto a closed and convex set $\mathbb{Z} \subset \mathbb{R}^n$, and $\mathbb{R}_+$ refers to the set of all non-negative real numbers. 
\par After the dual predictor update step \eqref{eq: dual predictor update}, we update the primal variables $(\hspace*{-1pt}\mathbf{x}^{k+1}\hspace*{-1pt}, \hspace*{-2pt}\mathbf{u}^{k+1}\hspace*{-1pt})$ by minimizing the Lagrangian function $\mathcal{L}(\mathbf{x}, \mathbf{u}, \bm{\mu}^{k+1}, \bm{\nu}^{k+1})$ evaluated at the dual predictor variable $(\bm{\mu}^{k+1}, \bm{\nu}^{k+1})$, plus the proximal terms. The primal minimization step can be written as
\begin{subequations}
\begin{align}
\begin{split}
\mathbf{x}^{k+1} = \underset{\mathbf{x} \in \mathbb{X}}{\argmin} \enspace &\frac{1}{2} \mathbf{x}^T P_0 \mathbf{x} + \mathbf{q}_0^T \mathbf{x} + \sum_{i=1}^{m_1} \mu_i^{k+1} \big(\frac{1}{2} \mathbf{x}^T P_i \mathbf{x} + \mathbf{q}_i^T \mathbf{x}\big) \\
+ &(\bm{\nu}^{k+1})^T A \mathbf{x} + \frac{1}{2 \rho} \lVert \mathbf{x} - \mathbf{x}^k \rVert_2^2,
\end{split} \label{eq: x primal minimization} \\
\mathbf{u}^{k+1} = \underset{\mathbf{u} \in \mathbb{R}^{n_2}}{\argmin} \enspace &\mathbf{c}_0^T \mathbf{u} + \sum_{i=1}^{m_1} \mu_i^{k+1} \mathbf{c}_i^T \mathbf{u} + (\bm{\nu}^{k+1})^T B \mathbf{u} + \frac{1}{2 \rho} \lVert \mathbf{u} - \mathbf{u}^k \rVert_2^2. \label{eq: u primal minimization}
\end{align}
\end{subequations}
Introducing the dual predictors $\bm{\mu}$ and $\bm{\nu}$ allows parallel updating of the primal variables $\mathbf{x}$ and $\mathbf{u}$, exactly as in the general PCPM algorithm. However, the primal variable $\mathbf{x} = (x_1 \dots x_j \dots x_{n_1})^T$ cannot be further decomposed into parallel updating of each component $x_j$,  due to the coupling terms $\mathbf{x}^TP_i\mathbf{x}$, $i = 0, 1, \dots, m_1$,  unless all $P_i$'s are diagonal matrices. To realize parallel updating of $x_j$'s, we propose a simple idea to use $P_i\mathbf{x}^k$ as a ``predictor" for $P_i\mathbf{x}$ in the optimization \eqref{eq: x primal minimization}. 
\par To illustrate the idea, it may be easier to consider the first-order optimality condition of \eqref{eq: x primal minimization}: 
\begin{equation}\label{eq: x coupled optimality condition}
\frac{1}{\rho} (\mathbf{x}^k - \mathbf{x}^{k+1}) \in \underbrace{P_0 \mathbf{x}^{k+1}}_{(\Delta_0)} + \mathbf{q}_0 + \sum_{i=1}^{m_1} \mu_i^{k+1} \big(\underbrace{P_i \mathbf{x}^{k+1}}_{(\Delta_i)} + \mathbf{q}_i\big) + A^T \bm{\nu}^{k+1} + \mathcal{N}_{\mathbb{X}}(\mathbf{x}^{k+1}),
\end{equation}
where $\mathcal{N}_{\mathbb{X}}(\mathbf{x}^{k+1})$ is the normal cone to the convex set $\mathbb{X} = \prod_{j=1}^{n_1} \mathbb{X}_j$ at the solution point $\mathbf{x}^{k+1}$. By approximating each $(\Delta_i)$ using the predictor $P_i \mathbf{x}^k$, $i = 0, 1, \ldots, m_1$, the first-order optimality condition now becomes
\begin{equation}\label{eq: x first decoupled optimality condition}
\frac{1}{\rho} (\mathbf{x}^k - \mathbf{x}^{k+1}) \in P_0 \mathbf{x}^k + \mathbf{q}_0 + \sum_{i=1}^{m_1} \mu_i^{k+1} \big(P_i \mathbf{x}^k + \mathbf{q}_i\big) + A^T \bm{\nu}^{k+1} + \mathcal{N}_{\mathbb{X}}(\mathbf{x}^{k+1}).
\end{equation}
With \eqref{eq: x first decoupled optimality condition}, it is easy to see that $\mathbf{x}^{k+1}$ can be obtained through component-wise calculations. (Note that the normal cone of box constraints has explicit algebraic expressions and can also be decomposed component-wise with respect to $\mathbf{x}^{k+1}$.) Unfortunately, this simple idea would not work theoretically in the sense that convergence to an optimal solution cannot be established. This is mainly due to the difficulty to bound the error of 
$\lVert P_i \mathbf{x}^{k+1} - P_i \mathbf{x}^{k} \rVert$ along the iterations. 
\par To overcome this hurdle, we propose a novel approach to split \eqref{eq: x first decoupled optimality condition} into two steps by first introducing  ``primal predictor" variable $\mathbf{y}^{k+1}$ 
for the primal decision variable $\mathbf{x}^k$, followed by a corrector update:
\begin{subequations} 
\begin{align}
\begin{split}
&\textbf{step 1 (predictor)}: \\
&\frac{1}{\rho} (\mathbf{x}^k - \mathbf{y}^{k+1}) \in P_0 \mathbf{x}^k + \mathbf{q}_0 + \sum_{i=1}^{m_1} \lambda_i^k \big(P_i \mathbf{x}^k + \mathbf{q}_i\big) + A^T \bm{\gamma}^k + \mathcal{N}_{\mathbb{X}}(\mathbf{y}^{k+1});
\end{split} \label{eq: x decoupled optimality condition 1} \\
\begin{split}
&\textbf{step 2 (corrector)}: \\
&\frac{1}{\rho} (\mathbf{x}^k - \mathbf{x}^{k+1}) \in P_0 \mathbf{y}^{k+1} \hspace*{-3pt}+ \mathbf{q}_0 + \sum_{i=1}^{m_1} \mu_i^{k+1} \big(P_i \mathbf{y}^{k+1} + \mathbf{q}_i\big) + A^T \bm{\nu}^{k+1} \hspace*{-3pt}+ \mathcal{N}_{\mathbb{X}}(\mathbf{x}^{k+1}).
\end{split} \label{eq: x decoupled optimality condition 2}
\end{align}
\end{subequations}
By focusing on box constraints for the generic set $\mathbb{X}_j$, and using the notation $[\mathbf{z}]_j$ to denote the $j$-th component of a vector $\mathbf{z}$, we can rewrite \eqref{eq: x decoupled optimality condition 1} and \eqref{eq: x decoupled optimality condition 2} component-wise as follows, for each $j = 1, \dots, n_1$:
\begin{subequations}
\begin{align}
\begin{split}
&\bullet \enspace \text{primal predictor of} \enspace x_j^k: \\
&y_j^{k+1} \coloneqq \Pi_{\mathbb{X}_j}\Big(x_j^k - \rho \big[P_0 \mathbf{x}^k + \mathbf{q}_0 + \sum_{i=1}^{m_1} \lambda_i^k \big(P_i \mathbf{x}^k + \mathbf{q}_i\big) + A^T \bm{\gamma}^k\big]_j\Big),
\end{split} \label{eq: x primal predictor update} \\
\begin{split}
&\bullet \enspace \text{primal corrector of} \enspace x_j^k: \\
&x_j^{k+1} = \Pi_{\mathbb{X}_j}\Big(x_j^k - \rho \big[P_0 \mathbf{y}^{k+1} + \mathbf{q}_0 + \sum_{i=1}^{m_1} \mu_i^{k+1} \big(P_i \mathbf{y}^{k+1} + \mathbf{q}_i\big) + A^T \bm{\nu}^{k+1}\big]_j \Big),
\end{split} \label{eq: x primal corrector update}
\end{align}
\end{subequations}
where the projection onto the box constraint set $\mathbb{X}_j$ can be expressed as:
\begin{equation}
\Pi_{\mathbb{X}_j}(x_j) \colon=
\left\{
\begin{array}{ll}
0, &\text{if } x_j < 0; \\[2pt]
x_j, &\text{if } 0 \leq x_j \leq \bar{X}_j; \\[2pt]
\bar{X}_j, &\text{if } x_j > \bar{X}_j.
\end{array}
\right.
\end{equation}
With \eqref{eq: x primal predictor update} and \eqref{eq: x primal corrector update}, in addition to the apparent benefits of updating the variables component-wise, the multiplications of $P_i \mathbf{x}^k$ and $P_i \mathbf{y}^{k+1}$, $i = 0, 1, \dots, m_1$ in \eqref{eq: x primal predictor update} and \eqref{eq: x primal corrector update} do not need to be carried out completely in each computing unit responsible for updating $y_j^{k+1}$ and $x_j^{k+1}$. The multiplications can be divided into multiple sub-tasks, and each of them only needs the $j$-th column of matrices $P_i$'s and can be accomplished locally by each computing unit. 
More detailed discussions of this point are provided in Section \ref{subsec: DS}.

%
\par The update of the other primal variable, $\mathbf{u}^{k+1}$, can be performed in a similar fashion, which is to split into two steps by first introducing a predictor variable $\mathbf{v}^{k+1}$ for $\mathbf{u}^k$, followed by a corrector update:
\begin{subequations}
\begin{align}
\begin{split}
&\bullet \enspace \text{primal predictor of} \enspace u_j^k: \\
&v_j^{k+1} \colon= u_j^k - \rho\big[\mathbf{c}_0 + \sum_{i=1}^{m_1} \lambda_i^k \mathbf{c}_i + B^T \bm{\gamma}^k\big]_j, \quad j = 1, \dots, n_2,
\end{split} \label{eq: u primal predictor update} \\
\begin{split}
&\bullet \enspace \text{primal corrector of} \enspace u_j^k: \\
&u_j^{k+1} = u_j^k - \rho\big[\mathbf{c}_0 + \sum_{i=1}^{m_1} \mu_i^{k+1} \mathbf{c}_i + B^T \bm{\nu}^{k+1}\big]_j, \quad j = 1, \dots, n_2.
\end{split} \label{eq: u primal corrector update}
\end{align}
\end{subequations} 
A dual corrector update is then performed for each Lagrangian multiplier $(\bm{\lambda}^{k+1}, \bm{\gamma}^{k+1})$: 
\begingroup
\begin{align}
&\bullet \enspace \text{dual corrector}: \nonumber \\
&\lambda_i^{k+1} = \Pi_{\mathbb{R}_+}\Big(\lambda_i^k + \rho \big[\frac{1}{2} (\mathbf{y}^{k+1})^T P_i \mathbf{y}^{k+1} + \mathbf{q}_i^T \mathbf{y}^{k+1} + \mathbf{c}_i^T \mathbf{v}^{k+1} + r_i\big]\Big), \nonumber \\[2pt]
&\qquad \qquad i = 1, \dots, m_1, \nonumber \\
&\gamma_i^{k+1} = \gamma_i^k + \rho \big[A \mathbf{y}^{k+1} + B \mathbf{v}^{k+1} - \mathbf{b}\big]_i, \quad i = 1, \dots, m_2. \label{eq: dual corrector update}
\end{align}
\endgroup
\par The overall structure of the proposed algorithm, which we name it 
PC$^2$PM, to reflect the fact that two sets of predictors and correctors are utilized, is presented in Algorithm \ref{alg: PC^2PM} below.
\begin{algorithm}
\caption{PC$^2$PM}\label{alg: PC^2PM}
\begin{algorithmic}[1]
\State \textbf{Initialization} choose an arbitrary starting point $(\mathbf{x}^0, \mathbf{u}^0, \bm{\lambda}^0, \bm{\gamma}^0)$.
\State $k \gets 0$.
\While{termination conditions are not met}
\State $\bullet$ Adaptive Step-size \par \textbf{update} the step-size $\rho^{k+1}$;
\State $\bullet$ Predictor Update \par \textbf{update} $(\bm{\mu}^{k+1}, \bm{\nu}^{k+1})$, $\mathbf{y}^{k+1}$, and $\mathbf{v}^{k+1}$ according to \eqref{eq: dual predictor update},  \eqref{eq: x primal predictor update} and \eqref{eq: u primal predictor update};   
\State $\bullet$ Corrector Update \par \textbf{update} $\mathbf{x}^{k+1}$, 
$\mathbf{u}^{k+1}$, and $(\bm{\lambda}^{k+1}, \bm{\gamma}^{k+1})$  according to \eqref{eq: x primal corrector update}, \eqref{eq: u primal corrector update} and \eqref{eq: dual corrector update};
\State $k \leftarrow k+1$
\EndWhile
\State \textbf{return} $(\mathbf{x}^k, \mathbf{u}^k, \bm{\lambda}^k, \bm{\gamma}^k)$.
\end{algorithmic}
\end{algorithm} 
\par Note that the starting point of the PC$^2$PM algorithm can be arbitrary, and is not required to be feasible. To establish convergence of the algorithm, the specific rules to update the step-size $\rho$ are crucial, which is the main focus of the next section. The implementation details, including distributed data storage, parallel computing through Message Passing Interface (MPI), and termination conditions, are provided in Section \ref{sec: Imp}.
%
%
%
%
\section{Convergence Analysis}
In this section, we establish sufficient conditions for the PC$^2$PM algorithm to converge to an optimal solution from any starting point. First, we make a standard assumption on \eqref{eq: QCQP Problem Form} about the existence of an optimal solution.
\begin{assumption}\label{as:Exist}[Existence of an Optimal Solution]
The convex QCQP \eqref{eq: QCQP Problem Form} is assumed to have an optimal solution, denoted by $(\mathbf{x}^*, \mathbf{u}^*)$. 
\end{assumption}

With Assumption \ref{as:Exist} and the assumption on Slater's CQ, we know that a saddle point exists for the convex QCQP \eqref{eq: QCQP Problem Form}; more specifically, $(\mathbf{x}^*, \mathbf{u}^*, \bm{\lambda}^*, \bm{\gamma}^*)$ is a saddle point of \eqref{eq: QCQP Problem Form} if for any $\mathbf{x} \in \mathbb{X}$, $\mathbf{u} \in \mathbb{R}^{n_2}$, $\bm{\lambda} \in \mathbb{R}_+^{m_1}$ and $\bm{\gamma} \in \mathbb{R}^{m_2}$, we have that 
\begin{equation}\label{eq: saddle point inequality}
	\mathcal{L}(\mathbf{x}^*, \mathbf{u}^*, \bm{\lambda}, \bm{\gamma}) \leq \mathcal{L}(\mathbf{x}^*, \mathbf{u}^*, \bm{\lambda}^*, \bm{\gamma}^*) \leq \mathcal{L}(\mathbf{x}, \mathbf{u}, \bm{\lambda}^*, \bm{\gamma}^*), 
\end{equation}
where $\mathcal{L}(\mathbf{x}, \mathbf{u}, \bm{\lambda}, \bm{\gamma})$ is the Lagrangian function of \eqref{eq: QCQP Problem Form}:
\begin{equation}\label{eq: Lagrangian}
\begin{aligned}
\mathcal{L}(\mathbf{x}, \mathbf{u}, \bm{\lambda}, \bm{\gamma}) \coloneqq &\frac{1}{2} \mathbf{x}^T P_0 \mathbf{x} + \mathbf{q}_0^T \mathbf{x} + \mathbf{c}_0^T \mathbf{u} + r_0 \\
+ &\sum_{i=1}^{m_1} \lambda_i \big(\frac{1}{2} \mathbf{x}^T P_i \mathbf{x} + \mathbf{q}_i^T \mathbf{x} + \mathbf{c}_i^T \mathbf{u} + r_i\big) + \bm{\gamma}^T (A \mathbf{x} + B \mathbf{u} - \mathbf{b}).
\end{aligned}
\end{equation}
The case when the convex QCQP is infeasible will be discussed in \ref{subsec: infea unbound}.

%
\par Next, we derive some essential lemmas for constructing the main convergence proof.
\begin{lemma}[Inequality of Proximal Minimization Point]\label{lem: Lemma1}
Given a closed, convex set $\mathbb{Z} \subset \mathbb{R}^n$, and a closed, convex differentiable function $F: \mathbb{Z} \to \mathbb{R}$. With a given  point $\bar{\mathbf{z}} \in \mathbb{Z}$ and a positive number $\rho > 0$, if $\widehat{\mathbf{z}}$ is a proximal minimization point; i.e. $\widehat{\mathbf{z}} \coloneqq \arg \underset{\mathbf{z} \in \mathbb{Z}}{\min}\ F(\mathbf{z}) + \frac{1}{2 \rho} \lVert \mathbf{z} - \bar{\mathbf{z}} \rVert_2^2$, then we have that
\begin{equation}
2 \rho [F(\widehat{\mathbf{z}}) - F(\mathbf{z})] \leq \lVert \bar{\mathbf{z}} - \mathbf{z} \rVert_2^2 - \lVert \widehat{\mathbf{z}} - \mathbf{z} \rVert_2^2 - \lVert \widehat{\mathbf{z}} - \bar{\mathbf{z}} \rVert_2^2, \quad \forall \mathbf{z} \in \mathbb{Z}.
\end{equation}
\end{lemma}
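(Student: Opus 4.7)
\medskip
\noindent\textbf{Proof proposal.} The plan is to combine the first-order optimality condition at the proximal minimizer $\widehat{\mathbf{z}}$ with the convexity (subgradient inequality) of $F$, and then re-express the resulting inner product through a polarization/three-point identity to obtain the squared-norm right-hand side.

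First, since $\widehat{\mathbf{z}}$ minimizes $F(\mathbf{z}) + \tfrac{1}{2\rho}\lVert \mathbf{z}-\bar{\mathbf{z}}\rVert_2^2$ over the closed convex set $\mathbb{Z}$, and $F$ is (closed, convex and) differentiable, I would invoke the standard variational inequality
\[
\bigl\langle \nabla F(\widehat{\mathbf{z}}) + \tfrac{1}{\rho}(\widehat{\mathbf{z}}-\bar{\mathbf{z}}),\; \mathbf{z}-\widehat{\mathbf{z}} \bigr\rangle \;\geq\; 0, \qquad \forall\, \mathbf{z}\in\mathbb{Z},
\]
which is equivalent to $-\tfrac{1}{\rho}(\widehat{\mathbf{z}}-\bar{\mathbf{z}})\in \nabla F(\widehat{\mathbf{z}})+\mathcal{N}_{\mathbb{Z}}(\widehat{\mathbf{z}})$. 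Rearranging gives
\[
\bigl\langle \nabla F(\widehat{\mathbf{z}}),\; \mathbf{z}-\widehat{\mathbf{z}}\bigr\rangle \;\geq\; -\tfrac{1}{\rho}\bigl\langle \widehat{\mathbf{z}}-\bar{\mathbf{z}},\; \mathbf{z}-\widehat{\mathbf{z}}\bigr\rangle.
\]

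Second, by convexity of $F$ on $\mathbb{Z}$, the subgradient inequality yields $F(\mathbf{z})\geq F(\widehat{\mathbf{z}})+\langle \nabla F(\widehat{\mathbf{z}}),\mathbf{z}-\widehat{\mathbf{z}}\rangle$, so combining with the previous display and multiplying by $2\rho>0$ produces
\[
2\rho\bigl[F(\widehat{\mathbf{z}})-F(\mathbf{z})\bigr] \;\leq\; 2\bigl\langle \widehat{\mathbf{z}}-\bar{\mathbf{z}},\; \mathbf{z}-\widehat{\mathbf{z}}\bigr\rangle.
\]

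Third, I would close the argument with the three-point identity, which follows from expanding $\lVert \bar{\mathbf{z}}-\mathbf{z}\rVert_2^2 = \lVert (\bar{\mathbf{z}}-\widehat{\mathbf{z}})+(\widehat{\mathbf{z}}-\mathbf{z})\rVert_2^2$:
\[
2\bigl\langle \widehat{\mathbf{z}}-\bar{\mathbf{z}},\;\mathbf{z}-\widehat{\mathbf{z}}\bigr\rangle \;=\; \lVert \bar{\mathbf{z}}-\mathbf{z}\rVert_2^2 - \lVert \widehat{\mathbf{z}}-\mathbf{z}\rVert_2^2 - \lVert \widehat{\mathbf{z}}-\bar{\mathbf{z}}\rVert_2^2.
\]
Substituting this equality into the previous inequality yields exactly the stated bound, for every $\mathbf{z}\in\mathbb{Z}$.

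There is no real obstacle: the only delicate point is ensuring that the optimality condition is written as a variational inequality on $\mathbb{Z}$ (rather than an unconstrained equation), so that the subsequent inner product involves $\mathbf{z}-\widehat{\mathbf{z}}$ with $\mathbf{z}\in\mathbb{Z}$ arbitrary. Once that is in place, the rest is mechanical: convexity provides the linearization, and the polarization identity converts the cross term into the three squared norms. The minus sign on $\lVert \widehat{\mathbf{z}}-\bar{\mathbf{z}}\rVert_2^2$ in the conclusion is precisely what this identity supplies, which is the feature that later lets the lemma be chained across PC$^2$PM iterations to produce a telescoping descent-type estimate.
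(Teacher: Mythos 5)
Your proof is correct and follows essentially the same route as the paper: the paper defines $\Phi(\mathbf{z})=F(\mathbf{z})+\tfrac{1}{2\rho}\lVert\mathbf{z}-\bar{\mathbf{z}}\rVert_2^2$ and invokes its strong convexity with modulus $\tfrac{1}{\rho}$ together with first-order optimality at $\widehat{\mathbf{z}}$, which, once $\Phi$ is expanded, is exactly your variational-inequality-plus-three-point-identity computation. If anything, your explicit variational inequality over $\mathbb{Z}$ is the more careful rendering of the paper's shorthand $\nabla_{\mathbf{z}}\Phi(\widehat{\mathbf{z}})=\mathbf{0}$, which literally holds only when the minimizer is unconstrained or lies in the interior of $\mathbb{Z}$.
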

\begin{proof}
Denote $\Phi(\mathbf{z}) = F(\mathbf{z}) + \frac{1}{2 \rho} \lVert \mathbf{z} - \bar{\mathbf{z}} \rVert_2^2$. By the definition of $\widehat{\mathbf{z}}$, we have $\nabla_{\mathbf{z}} \Phi(\widehat{\mathbf{z}}) = \mathbf{0}$. Since $\Phi(\mathbf{z})$ is strongly convex with modulus $\frac{1}{\rho}$, it follows that $2 \rho \big[\Phi(\mathbf{z}) - \Phi(\widehat{\mathbf{z}})\big] \geq \lVert \widehat{\mathbf{z}} - \mathbf{z} \rVert_2^2$ for any $\mathbf{z} \in \mathbb{Z}$. \qed
\end{proof}
For the ease of presenting the next two lemmas, we introduce a notation for the linear approximation of the Lagrangian function \eqref{eq: Lagrangian}.
\begin{definition}
With a given tuple $\left(\mathbf{x}^{\prime},\  \bm{\lambda}^{\prime},\ \bm{\gamma}^{\prime}  \right) \in \mathbb{X} \times  \mathbb{R}_+^{m_1} \times \mathbb{R}^{m_2} $, we define the following function $\mathcal{R}: \mathbb{X} \times \mathbb{R}^{n_2} \to \mathbb{R}$ as a linear approximation of the Lagrangian function $\mathcal{L}(\mathbf{x}, \mathbf{u}, \bm{\lambda}, \bm{\gamma})$ evaluated at $(\mathbf{x}^{\prime}, \bm{\lambda}^{\prime}, \bm{\gamma}^{\prime})$.
\begin{equation}\label{eq: R_func}
\begin{aligned}
\mathcal{R}(\mathbf{x}, \mathbf{u}; \mathbf{x}^{\prime}, \bm{\lambda}^{\prime}, \bm{\gamma}^{\prime}) \coloneqq &\ (P_0 \mathbf{x}^{\prime} + \mathbf{q}_0)^T \mathbf{x} + \mathbf{c}_0^T \mathbf{u} + r_0 \\
+ &\sum_{i=1}^{m_1} \lambda_i^{\prime} \big[(P_i \mathbf{x}^{\prime} + \mathbf{q}_i)^T \mathbf{x} + \mathbf{c}_i^T \mathbf{u} + r_i\big] + (\bm{\gamma}^{\prime})^T (A \mathbf{x} + B \mathbf{u} - \mathbf{b}),
\end{aligned}
\end{equation}
for any $\mathbf{x} \in \mathbb{X}$ and $\mathbf{u} \in \mathbb{R}^{n_2}$.
\end{definition}
\begin{lemma}\label{lem: Lemma2}
The update steps \eqref{eq: dual predictor update}, \eqref{eq: x primal predictor update}, \eqref{eq: x primal predictor update}, \eqref{eq: u primal predictor update}, \eqref{eq: u primal corrector update} and \eqref{eq: dual corrector update} are equivalent to obtaining proximal minimization points as follows:
\begin{subequations}
\begin{flalign}
 (\bm{\mu}^{k+1},\ \bm{\nu}^{k+1})  & =   \underset{\bm{\lambda} \in \mathbb{R}_+^{m_1},\ \bm{\gamma} \in \mathbb{R}^{m_2}}{\argmin} -\mathcal{L}(\mathbf{x}^k, \mathbf{u}^k, \bm{\lambda}, \bm{\gamma})  
\nonumber  \\
& \hspace*{10pt} + \frac{1}{2 \rho^{k+1}} \lVert \bm{\lambda} - \bm{\lambda}^k \rVert_2^2 + \frac{1}{2 \rho^{k+1}} \lVert \bm{\gamma} - \bm{\gamma}^k \rVert_2^2; \label{eq: mu_k+1}  \\[10pt]
(\mathbf{y}^{k+1},\ \mathbf{v}^{k+1})  & = \underset{\mathbf{x} \in \mathbb{X},\ \mathbf{u} \in \mathbb{R}^{n_2}}{\argmin} \mathcal{R}(\mathbf{x}, \mathbf{u}; \mathbf{x}^k, \bm{\lambda}^k, \bm{\gamma}^k) \nonumber \\
& \hspace*{10pt} + \frac{1}{2 \rho^{k+1}} \lVert \mathbf{x} - \mathbf{x}^k \rVert_2^2 + \frac{1}{2 \rho^{k+1}} \lVert \mathbf{u} - \mathbf{u}^k \rVert_2^2; \label{eq: y_k+1} \\[10pt]
(\mathbf{x}^{k+1},\ \mathbf{u}^{k+1}) & = \underset{\mathbf{x} \in \mathbb{X},\ \mathbf{u} \in \mathbb{R}^{n_2}}{\argmin}
\mathcal{R}(\mathbf{x}, \mathbf{u}; \mathbf{y}^{k+1}, \bm{\mu}^{k+1}, \bm{\nu}^{k+1}) \nonumber \\
& \hspace*{10pt} + \frac{1}{2 \rho^{k+1}} \lVert \mathbf{x} - \mathbf{x}^k \rVert_2^2 + \frac{1}{2 \rho^{k+1}} \lVert \mathbf{u} - \mathbf{u}^k \rVert_2^2; \label{eq: x_k+1} \\[10pt]
(\bm{\lambda}^{k+1},\ \bm{\gamma}^{k+1}) & = \underset{\bm{\lambda} \in \mathbb{R}_+^{m_1},\ \bm{\gamma} \in \mathbb{R}^{m_2}}{\argmin} -\mathcal{L}(\mathbf{y}^{k+1}, \mathbf{v}^{k+1}, \bm{\lambda}, \bm{\gamma}) \nonumber \\
& \hspace*{10pt} + \frac{1}{2 \rho^{k+1}} \lVert \bm{\lambda} - \bm{\lambda}^k \rVert_2^2 + \frac{1}{2 \rho^{k+1}} \lVert \bm{\gamma} - \bm{\gamma}^k \rVert_2^2. \label{eq: lambda_k+1}
\end{flalign}
\end{subequations}
\end{lemma}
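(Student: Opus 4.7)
The proof is essentially a verification argument: for each of the four subproblems I write down its first-order optimality conditions, exploit the fact that every subproblem is strongly convex in its decision variables (so the minimizer is unique and characterized by the variational inequality), and match the resulting projection formulas with the explicit updates \eqref{eq: dual predictor update}--\eqref{eq: dual corrector update}. The key structural observation is that $-\mathcal{L}(\mathbf{x}^k,\mathbf{u}^k,\bm{\lambda},\bm{\gamma})$ is \emph{linear} in the dual variables, and $\mathcal{R}(\mathbf{x},\mathbf{u};\mathbf{x}',\bm{\lambda}',\bm{\gamma}')$ is linear in $(\mathbf{x},\mathbf{u})$ by construction; hence each of the four proximal objectives decomposes as (linear term) $+$ (separable quadratic proximal term), and the minimization separates componentwise.

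The plan is to handle the four equations in order. For \eqref{eq: mu_k+1}, I would write the subproblem over $(\bm{\lambda},\bm{\gamma})\in\mathbb{R}_+^{m_1}\times\mathbb{R}^{m_2}$, note that the objective is separable across coordinates of $\bm{\lambda}$ and $\bm{\gamma}$, and apply the elementary fact that $\argmin_{\lambda\geq 0}\,\alpha\lambda+\tfrac{1}{2\rho}(\lambda-\bar\lambda)^2 = \Pi_{\mathbb{R}_+}(\bar\lambda-\rho\alpha)$ with $\alpha=-\bigl[\tfrac{1}{2}(\mathbf{x}^k)^TP_i\mathbf{x}^k+\mathbf{q}_i^T\mathbf{x}^k+\mathbf{c}_i^T\mathbf{u}^k+r_i\bigr]$; the $\bm{\gamma}$-subproblem is unconstrained and yields the closed-form update directly. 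For \eqref{eq: y_k+1}, since $\mathcal{R}(\mathbf{x},\mathbf{u};\mathbf{x}^k,\bm{\lambda}^k,\bm{\gamma}^k)$ is linear in $(\mathbf{x},\mathbf{u})$, the subproblem again separates across the coordinates $x_j\in\mathbb{X}_j$ (box-constrained) and $u_j\in\mathbb{R}$ (unconstrained), and a direct computation of the gradient recovers the projection onto $\mathbb{X}_j$ in \eqref{eq: x primal predictor update} and the affine update in \eqref{eq: u primal predictor update}. The correctors \eqref{eq: x_k+1} and \eqref{eq: lambda_k+1} are treated identically, except that the linearization is taken at $(\mathbf{y}^{k+1},\bm{\mu}^{k+1},\bm{\nu}^{k+1})$ in place of $(\mathbf{x}^k,\bm{\lambda}^k,\bm{\gamma}^k)$ and at $(\mathbf{y}^{k+1},\mathbf{v}^{k+1})$ in place of $(\mathbf{x}^k,\mathbf{u}^k)$, respectively; the same separability and closed-form projection argument applies.

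There is no real obstacle in this argument; it is mechanical bookkeeping. The only point that merits care is to keep the roles of $\mathcal{L}$ and its linearization $\mathcal{R}$ straight: in the dual subproblems the full Lagrangian is used (but it is already linear in $(\bm{\lambda},\bm{\gamma})$ so no linearization is needed), whereas in the primal subproblems the nonlinear-in-$\mathbf{x}$ terms $\tfrac{1}{2}\mathbf{x}^TP_i\mathbf{x}$ are replaced by the linear approximation $(P_i\mathbf{x}')^T\mathbf{x}$ inside $\mathcal{R}$, which is precisely what produces the componentwise decoupling that the updates \eqref{eq: x primal predictor update} and \eqref{eq: x primal corrector update} exploit. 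Once this correspondence is verified for each of the four equations, the lemma follows.
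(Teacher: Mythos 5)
Your proposal is correct and is exactly the argument the paper intends: the paper omits the proof precisely because, as you observe, each of the four subproblems is a strongly convex minimization of a (linear in the decision variables) function plus a separable proximal term, so the first-order optimality conditions decouple componentwise and reduce to the standard identity $\argmin_{z\in\mathbb{Z}}\,\alpha z+\tfrac{1}{2\rho}(z-\bar z)^2=\Pi_{\mathbb{Z}}(\bar z-\rho\alpha)$, which reproduces \eqref{eq: dual predictor update}, \eqref{eq: x primal predictor update}, \eqref{eq: x primal corrector update}, \eqref{eq: u primal predictor update}, \eqref{eq: u primal corrector update} and \eqref{eq: dual corrector update}. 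No gap; your bookkeeping of where $\mathcal{L}$ versus its linearization $\mathcal{R}$ is used matches the paper's construction.
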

\qed 
\par Since all the four optimization in \eqref{eq: mu_k+1} -- \eqref{eq: lambda_k+1} are convex optimization problems with linear constraints, the proof follows directly from the first-order optimality conditions of each of the optimization problems, and hence is omitted. 
\begin{lemma}\label{lem: Lemma3}
At a saddle point $(\mathbf{x}^*, \mathbf{u}^*, \bm{\lambda}^*, \bm{\gamma}^*)$ of the QCQP \eqref{eq: QCQP Problem Form}, the following inequality holds for any $\mathbf{x} \in \mathbb{X}$, $\mathbf{u} \in \mathbb{R}^{n_2}$, $\bm{\lambda} \in \mathbb{R}_+^{m_1}$ and $\bm{\gamma} \in \mathbb{R}^{m_2}$:
\begin{equation}
\begin{aligned}
&\mathcal{R}(\mathbf{x}^*, \mathbf{u}^*; \mathbf{x}, \bm{\lambda}, \bm{\gamma}) - \mathcal{R}(\mathbf{x}, \mathbf{u}; \mathbf{x}, \bm{\lambda}, \bm{\gamma}) \\
\leq &\sum_{i=1}^{m_1} (\lambda_i^* - \lambda_i) \big(\frac{1}{2} \mathbf{x}^T P_i \mathbf{x} + \mathbf{q}_i^T \mathbf{x} + \mathbf{c}_i^T \mathbf{u} + r_i\big) + (\bm{\gamma}^* - \bm{\gamma})^T (A \mathbf{x} + B \mathbf{u} - \mathbf{b}).
\end{aligned}
\end{equation}
\end{lemma}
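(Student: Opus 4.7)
The plan is to recognize the left-hand side as a first-order linearization of the Lagrangian and then chain it with the saddle-point inequality \eqref{eq: saddle point inequality}. Inspecting the definition in \eqref{eq: R_func}, the map $\mathcal{R}(\,\cdot\,,\,\cdot\,;\mathbf{x},\bm{\lambda},\bm{\gamma})$ is affine in its first two arguments, and a direct subtraction yields
\[
\mathcal{R}(\mathbf{x}^*, \mathbf{u}^*; \mathbf{x}, \bm{\lambda}, \bm{\gamma}) - \mathcal{R}(\mathbf{x}, \mathbf{u}; \mathbf{x}, \bm{\lambda}, \bm{\gamma}) \;=\; \nabla_{\mathbf{x}} \mathcal{L}(\mathbf{x},\mathbf{u},\bm{\lambda},\bm{\gamma})^T (\mathbf{x}^* - \mathbf{x}) + \nabla_{\mathbf{u}} \mathcal{L}(\mathbf{x},\mathbf{u},\bm{\lambda},\bm{\gamma})^T (\mathbf{u}^* - \mathbf{u}),
\]
i.e., the first-order increment of $\mathcal{L}(\,\cdot\,,\,\cdot\,,\bm{\lambda},\bm{\gamma})$ at the base point $(\mathbf{x},\mathbf{u})$ along the direction $(\mathbf{x}^* - \mathbf{x}, \mathbf{u}^* - \mathbf{u})$. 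This is the key structural identification; the remaining argument is a short chain of inequalities.

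Next I invoke convexity. Since each $P_i$ is PSD and each $\lambda_i \geq 0$ for $i \geq 1$, the map $(\mathbf{x},\mathbf{u}) \mapsto \mathcal{L}(\mathbf{x},\mathbf{u},\bm{\lambda},\bm{\gamma})$ is a nonnegative combination of convex quadratics together with an affine term, hence convex. The standard gradient inequality therefore upgrades the linearization in the previous display to
\[
\mathcal{R}(\mathbf{x}^*, \mathbf{u}^*; \mathbf{x}, \bm{\lambda}, \bm{\gamma}) - \mathcal{R}(\mathbf{x}, \mathbf{u}; \mathbf{x}, \bm{\lambda}, \bm{\gamma}) \;\leq\; \mathcal{L}(\mathbf{x}^*, \mathbf{u}^*, \bm{\lambda}, \bm{\gamma}) - \mathcal{L}(\mathbf{x}, \mathbf{u}, \bm{\lambda}, \bm{\gamma}).
\]

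To finish, I apply both halves of the saddle-point inequality \eqref{eq: saddle point inequality}, namely $\mathcal{L}(\mathbf{x}^*, \mathbf{u}^*, \bm{\lambda}, \bm{\gamma}) \leq \mathcal{L}(\mathbf{x}^*, \mathbf{u}^*, \bm{\lambda}^*, \bm{\gamma}^*) \leq \mathcal{L}(\mathbf{x}, \mathbf{u}, \bm{\lambda}^*, \bm{\gamma}^*)$, which sharpens the previous bound to $\mathcal{L}(\mathbf{x},\mathbf{u},\bm{\lambda}^*,\bm{\gamma}^*) - \mathcal{L}(\mathbf{x},\mathbf{u},\bm{\lambda},\bm{\gamma})$. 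Expanding this difference via \eqref{eq: Lagrangian}, the objective and constant pieces cancel and what survives is exactly
\[
\sum_{i=1}^{m_1} (\lambda_i^* - \lambda_i)\bigl[\tfrac{1}{2}\mathbf{x}^T P_i \mathbf{x} + \mathbf{q}_i^T \mathbf{x} + \mathbf{c}_i^T \mathbf{u} + r_i\bigr] + (\bm{\gamma}^* - \bm{\gamma})^T (A\mathbf{x} + B\mathbf{u} - \mathbf{b}),
\]
the target right-hand side.

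I do not expect a genuine obstacle. The only subtlety worth flagging is that $\mathcal{R}(\mathbf{x}, \mathbf{u}; \mathbf{x}, \bm{\lambda}, \bm{\gamma})$ is \emph{not} equal to $\mathcal{L}(\mathbf{x}, \mathbf{u}, \bm{\lambda}, \bm{\gamma})$: it double-counts the quadratic terms, producing $\mathbf{x}^T P_i \mathbf{x}$ instead of $\tfrac{1}{2}\mathbf{x}^T P_i \mathbf{x}$. So one should resist reading the left-hand side as a Taylor remainder of $\mathcal{L}$; the correct reading is as the difference of two affine evaluations of the same affine function, which is precisely why the quadratic-vs-double-quadratic mismatch cancels in the subtraction and leaves the clean gradient-inner-product form used above.
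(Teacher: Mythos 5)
Your proof is correct and is essentially the paper's argument: the gradient inequality you invoke for the convex function $(\mathbf{x},\mathbf{u})\mapsto\mathcal{L}(\mathbf{x},\mathbf{u},\bm{\lambda},\bm{\gamma})$ is exactly the paper's nonnegativity of $\tfrac{1}{2}(\mathbf{x}-\mathbf{x}^*)^T\bigl(P_0+\sum_{i=1}^{m_1}\lambda_i P_i\bigr)(\mathbf{x}-\mathbf{x}^*)$, and both proofs then finish with the same saddle-point inequality \eqref{eq: saddle point inequality} and the same cancellation when expanding $\mathcal{L}(\mathbf{x},\mathbf{u},\bm{\lambda}^*,\bm{\gamma}^*)-\mathcal{L}(\mathbf{x},\mathbf{u},\bm{\lambda},\bm{\gamma})$. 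Your identification of the $\mathcal{R}$-difference as the first-order increment of $\mathcal{L}$ (and the caveat that $\mathcal{R}(\mathbf{x},\mathbf{u};\mathbf{x},\bm{\lambda},\bm{\gamma})\neq\mathcal{L}(\mathbf{x},\mathbf{u},\bm{\lambda},\bm{\gamma})$) is accurate, so no gap remains.
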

\begin{proof}
For any $\mathbf{x} \in \mathbb{X}$, $\mathbf{u} \in \mathbb{R}^{n_2}$, $\bm{\lambda} \in \mathbb{R}_+^{m_1}$ and $\bm{\gamma} \in \mathbb{R}^{m_2}$,  we have that $\mathcal{L}(\mathbf{x}, \mathbf{u}, \bm{\lambda}^*, \bm{\gamma}^*) \geq \mathcal{L}(\mathbf{x}^*, \mathbf{u}^*, \bm{\lambda}, \bm{\gamma})$ by the saddle point inequality \eqref{eq: saddle point inequality}. We also have the inequality $\frac{1}{2} (\mathbf{x} - \mathbf{x}^*)^T P_0 (\mathbf{x} - \mathbf{x}^*) + \sum_{i=1}^{m_1} \lambda_i \big[\frac{1}{2} (\mathbf{x} - \mathbf{x}^*)^T P_i (\mathbf{x} - \mathbf{x}^*)\big] \geq 0$ due to the positive semi-definiteness of each matrix $P_0, P_1, \dots, P_{m_1}$. Adding the two inequalities together completes the proof. \qed
\end{proof}
\par We next establish fundamental estimates of the distance between the solution point $(\mathbf{x}^{k+1}, \mathbf{u}^{k+1}, \bm{\lambda}^{k+1}, \bm{\gamma}^{k+1})$ at each iteration $k$ and the saddle point $(\mathbf{x}^*, \mathbf{u}^*, \bm{\lambda}^*, \bm{\gamma}^*)$. 
\begin{proposition}\label{prp: Distance}
Let $(\mathbf{x}^*, \mathbf{u}^*, \bm{\lambda}^*, \bm{\gamma}^*)$ be a saddle point of the QCQP \eqref{eq: QCQP Problem Form}. 
For all $k \geq 0$, we have that
\begingroup
\begin{align}
&\ \lVert \mathbf{x}^{k+1} - \mathbf{x}^* \rVert_2^2 + \lVert \mathbf{u}^{k+1} - \mathbf{u}^* \rVert_2^2 \nonumber \\
\leq &\ \lVert \mathbf{x}^k - \mathbf{x}^* \rVert_2^2 + \lVert \mathbf{u}^k - \mathbf{u}^* \rVert_2^2 \nonumber \\[5pt]
- &\ \left(\lVert \mathbf{y}^{k+1} - \mathbf{x}^{k+1} \rVert_2^2 + \lVert \mathbf{v}^{k+1} - \mathbf{u}^{k+1} \rVert_2^2 + \lVert \mathbf{y}^{k+1} - \mathbf{x}^k \rVert_2^2 + \lVert \mathbf{v}^{k+1} - \mathbf{u}^k \rVert_2^2\right) \nonumber \\[3pt]
+ & \ 2 \rho^{k+1} \Bigg\{  (\mathbf{y}^{k+1} - \mathbf{x}^{k+1})^T P_0 (\mathbf{y}^{k+1} - \mathbf{x}^k) \nonumber \\[3pt]
&\hspace*{35pt} + \sum_{i=1}^{m_1} \mu_i^{k+1} (\mathbf{y}^{k+1} - \mathbf{x}^{k+1})^T P_i (\mathbf{y}^{k+1} - \mathbf{x}^k) \nonumber \\[3pt]
&\hspace*{35pt} + \sum_{i=1}^{m_1} (\lambda_i^* - \mu_i^{k+1})\left[\frac{1}{2} (\mathbf{y}^{k+1})^T P_i \mathbf{y}^{k+1} + \mathbf{q}_i^T \mathbf{y}^{k+1} + \mathbf{c}_i^T \mathbf{v}^{k+1} + r_i\right] \nonumber \\[3pt]
&\hspace*{35pt} + (\bm{\gamma}^* - \bm{\nu}^{k+1})^T (A \mathbf{y}^{k+1} + B \mathbf{v}^{k+1} - \mathbf{b}) \nonumber \\[3pt]
&\hspace*{35pt} + \sum_{i=1}^{m_1} (\mu_i^{k+1} - \lambda_i^{k}) \left[(P_i \mathbf{x}^k + \mathbf{q}_i)^T (\mathbf{y}^{k+1} - \mathbf{x}^{k+1}) + \mathbf{c}_i^T (\mathbf{v}^{k+1} - \mathbf{u}^{k+1})\right] \nonumber \\[3pt]
&\hspace*{35pt} + (\bm{\nu}^{k+1} - \bm{\gamma}^k)^T \left[A (\mathbf{y}^{k+1} - \mathbf{x}^{k+1}) + B (\mathbf{v}^{k+1} - \mathbf{u}^{k+1})\right]\Bigg\}, \label{eq: primal distance}
\end{align}
\endgroup
and
\begingroup
\begin{align}
&\ \lVert \bm{\lambda}^{k+1} - \bm{\lambda}^* \rVert_2^2 + \lVert \bm{\gamma}^{k+1} - \bm{\gamma}^* \rVert_2^2 \nonumber \\
\leq &\ \lVert \bm{\lambda}^k - \bm{\lambda}^* \rVert_2^2 + \lVert \bm{\gamma}^k - \bm{\gamma}^* \rVert_2^2 \nonumber \\[5pt]
- &\ \big(\lVert \bm{\mu}^{k+1} - \bm{\lambda}^{k+1} \rVert_2^2 + \lVert \bm{\nu}^{k+1} - \bm{\gamma}^{k+1} \rVert_2^2 + \lVert \bm{\mu}^{k+1} - \bm{\lambda}^k \rVert_2^2 + \lVert \bm{\nu}^{k+1} - \bm{\gamma}^k \rVert_2^2\big) \nonumber \\[3pt]
+ &\ 2 \rho^{k+1} \Bigg\{\sum_{i=1}^{m_1} (\lambda_i^{k+1} - \lambda_i^*) \big[\frac{1}{2} (\mathbf{y}^{k+1})^T P_i \mathbf{y}^{k+1} + \mathbf{q}_i^T \mathbf{y}^{k+1} + \mathbf{c}_i^T \mathbf{v}^{k+1} + r_i\big] \nonumber \\[3pt]
&\hspace*{33pt} + (\bm{\gamma}^{k+1} - \bm{\gamma}^*)^T (A \mathbf{y}^{k+1} + B \mathbf{v}^{k+1} - \mathbf{b}) \nonumber \\[3pt]
&\hspace*{33pt} + \sum_{i=1}^{m_1} (\mu_i^{k+1} - \lambda_i^{k+1}) \big[\frac{1}{2} (\mathbf{x}^k)^T P_i \mathbf{x}^k + \mathbf{q}_i^T \mathbf{x}^k + \mathbf{c}_i^T \mathbf{u}^k + r_i\big] \nonumber \\[3pt]
&\hspace*{33pt} + (\bm{\nu}^{k+1} - \bm{\gamma}^{k+1})^T (A \mathbf{x}^k + B \mathbf{u}^k - \mathbf{b})\Bigg\}. \label{eq: dual distance}
\end{align}
\endgroup
\end{proposition}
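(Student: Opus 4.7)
The plan is to exploit the four proximal-minimization characterizations from Lemma~\ref{lem: Lemma2}, applying Lemma~\ref{lem: Lemma1} twice for each of the two bounds — once to a predictor step and once to a corrector step — with the test point $\mathbf{z}$ chosen so that the common $\|\mathbf{x}^{k+1}-\mathbf{x}^k\|_2^2 + \|\mathbf{u}^{k+1}-\mathbf{u}^k\|_2^2$ (respectively its dual analogue) telescopes away when the two inequalities are added. Lemma~\ref{lem: Lemma3} is then invoked in the primal case to convert the resulting ``linear-approximation gap'' $\mathcal{R}(\mathbf{x}^*, \mathbf{u}^*;\,\cdot\,) - \mathcal{R}(\mathbf{y}^{k+1}, \mathbf{v}^{k+1};\,\cdot\,)$ into the saddle-point constraint-value terms that must appear on the right-hand side.

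Concretely, for~\eqref{eq: primal distance} I would apply Lemma~\ref{lem: Lemma1} to~\eqref{eq: y_k+1} with test point $(\mathbf{x}^{k+1}, \mathbf{u}^{k+1})$, and to~\eqref{eq: x_k+1} with test point $(\mathbf{x}^*, \mathbf{u}^*)$. Summing the two inequalities cancels $\|\mathbf{x}^{k+1}-\mathbf{x}^k\|_2^2 + \|\mathbf{u}^{k+1}-\mathbf{u}^k\|_2^2$, places $\|\mathbf{x}^{k+1}-\mathbf{x}^*\|_2^2 + \|\mathbf{u}^{k+1}-\mathbf{u}^*\|_2^2$ on the left with the four desired negative squared residuals, and leaves on the right a signed combination $E$ of four $\mathcal{R}$-values. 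I would then split $E$ into three brackets by inserting $\pm\,\mathcal{R}(\mathbf{y}^{k+1}, \mathbf{v}^{k+1}; \mathbf{y}^{k+1}, \bm{\mu}^{k+1}, \bm{\nu}^{k+1})$: the first bracket, $\mathcal{R}(\mathbf{x}^*, \mathbf{u}^*; \mathbf{y}^{k+1}, \bm{\mu}^{k+1}, \bm{\nu}^{k+1}) - \mathcal{R}(\mathbf{y}^{k+1}, \mathbf{v}^{k+1}; \mathbf{y}^{k+1}, \bm{\mu}^{k+1}, \bm{\nu}^{k+1})$, is exactly what Lemma~\ref{lem: Lemma3} bounds (producing the $(\lambda_i^* - \mu_i^{k+1})$ and $(\bm{\gamma}^* - \bm{\nu}^{k+1})$ contributions), while the other two brackets are linear in $(\mathbf{y}^{k+1} - \mathbf{x}^{k+1}, \mathbf{v}^{k+1} - \mathbf{u}^{k+1})$ by the linearity of $\mathcal{R}$ in its first two arguments.

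Regrouping the $P_i$ pieces of the two linear brackets yields $\mu_i^{k+1}(\mathbf{y}^{k+1} - \mathbf{x}^{k+1})^T P_i (\mathbf{y}^{k+1} - \mathbf{x}^k)$ together with the cross term $(\mu_i^{k+1} - \lambda_i^k)(P_i \mathbf{x}^k + \mathbf{q}_i)^T (\mathbf{y}^{k+1} - \mathbf{x}^{k+1})$; the $P_0$ piece gives the analogous expression with $P_0$; the $\mathbf{c}_i$ contributions combine into $(\mu_i^{k+1} - \lambda_i^k)\mathbf{c}_i^T(\mathbf{v}^{k+1} - \mathbf{u}^{k+1})$; the $\mathbf{q}_0$ and $\mathbf{c}_0$ coefficients cancel; and the linear-constraint contributions merge into $(\bm{\nu}^{k+1} - \bm{\gamma}^k)^T[A(\mathbf{y}^{k+1} - \mathbf{x}^{k+1}) + B(\mathbf{v}^{k+1} - \mathbf{u}^{k+1})]$. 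These match exactly the remaining terms in the curly brace of~\eqref{eq: primal distance}.

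The dual bound~\eqref{eq: dual distance} follows from the same two-application strategy applied to~\eqref{eq: mu_k+1} with test point $(\bm{\lambda}^{k+1}, \bm{\gamma}^{k+1})$ and to~\eqref{eq: lambda_k+1} with test point $(\bm{\lambda}^*, \bm{\gamma}^*)$; since $\mathcal{L}$ is linear in $(\bm{\lambda}, \bm{\gamma})$ for fixed $(\mathbf{x}, \mathbf{u})$, the four resulting Lagrangian differences expand directly into the constraint-value sums on the right of~\eqref{eq: dual distance}, without needing Lemma~\ref{lem: Lemma3}. The main obstacle is therefore the primal case — specifically, tracking signs in the three-bracket split of $E$ so that the $P_i$ regrouping produces precisely $\mu_i^{k+1}(\cdot)^T P_i (\cdot)$ (at the corrector multiplier evaluated at the predictor point) rather than, say, $\lambda_i^k(\cdot)^T P_i(\cdot)$; everything else is a mechanical consequence of Lemmas~\ref{lem: Lemma1}, \ref{lem: Lemma2}, and~\ref{lem: Lemma3}.
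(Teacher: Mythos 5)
Your proposal is correct and follows essentially the same route as the paper's own proof: Lemma~\ref{lem: Lemma1} applied to the predictor problem \eqref{eq: y_k+1} with test point $(\mathbf{x}^{k+1},\mathbf{u}^{k+1})$ and to the corrector problem \eqref{eq: x_k+1} with test point $(\mathbf{x}^*,\mathbf{u}^*)$, plus Lemma~\ref{lem: Lemma3} at $(\mathbf{y}^{k+1},\bm{\mu}^{k+1},\bm{\nu}^{k+1})$ to handle the $\mathcal{R}(\mathbf{x}^*,\mathbf{u}^*;\cdot)-\mathcal{R}(\mathbf{y}^{k+1},\mathbf{v}^{k+1};\cdot)$ gap, and the analogous two-application argument on \eqref{eq: mu_k+1} and \eqref{eq: lambda_k+1} for the dual bound. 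Your three-bracket split of $E$ is just the paper's "add three inequalities" step written as an insertion of $\pm\,\mathcal{R}(\mathbf{y}^{k+1},\mathbf{v}^{k+1};\mathbf{y}^{k+1},\bm{\mu}^{k+1},\bm{\nu}^{k+1})$, and your regrouping of the $P_i$, $\mathbf{c}_i$, and linear-constraint terms reproduces the curly-brace expression in \eqref{eq: primal distance} exactly.
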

\begin{proof}
The details of the proof are provided in Appendix~\ref{app: Proof_Sec3}. \qed
\end{proof}
\par Now we are ready to present the main convergence result. A key to the proof depends on the rules to adaptively update the step-size. The rules, however, are lengthy and purely technical, and hence their details are deferred to Appendix \ref{app:StepRule}.
\begin{theorem}[Global Convergence]\label{thm: Converge}
Assume that the Slater's CQ and Assumption 1 hold. 
At each iteration $k$ of Algorithm~\ref{alg: PC^2PM}, let the step-size $\rho^{k+1}$ be updated according to the update rules in Appendix~\ref{app:StepRule}. Then with an arbitrary starting point $(\mathbf{x}^0, \mathbf{u}^0, \bm{\lambda}^0, \bm{\gamma}^0) \in \mathbb{R}^{n_1} \times \mathbb{R}^{n_2} \times \mathbb{R}^{m_1} \times \mathbb{R}^{m_2}$, the sequence $\{(\mathbf{x}^k, \mathbf{u}^k, \bm{\lambda}^k, \bm{\gamma}^k)\}$  generated by Algorithm~\ref{alg: PC^2PM} converges to a saddle point $(\mathbf{x}^*, \mathbf{u}^*, \bm{\lambda}^*, \bm{\gamma}^*)$ of the QCQP \eqref{eq: QCQP Problem Form}.
\end{theorem}
\begin{proof}
Please see Appendix \ref{app: Proof_Sec3} for details. \qed
\end{proof}

A point we want to emphasize here is that the convergence result is quite strong in the sense that the entire iterative sequence, not just a subsequence, can be shown to converge to an optimization solution, with an arbitrary starting point. Such a result can help alleviate a strong assumption we made, which is to assume that a given convex QCQP has an optimal solution. While the algorithm or its convergence proof does not handle infeasible or unbounded cases, we will show in Section~\ref{subsec: infea unbound}  that from a practical perspective, our algorithm can just be blindly applied to a convex QCQP, and either infeasibility or unboundedness can be inferred from observing the behavior of the residuals we use for the algorithm's stopping criteria, which are to be defined in Section~\ref{subsec: Stopping}.

%
%
%
%
\section{Implementation}\label{sec: Imp}
In this section, we discuss how to efficiently implement the PC$^2$PM algorithm, especially within a distributed framework. 
\subsection{Distributed Storage of Data and Parallel Computing}\label{subsec: DS}
As mentioned in the introduction section, one key feature of the PC$^2$PM algorithm for solving convex QCQPs is that when implemented across multiple computing units, each computing unit does not need to store entire matrices. Instead, only each primal computing unit needs to store certain columns of the matrices (that is, the Hessian matrices in the objective function and the constraints). To illustrate this point, we use the primal predictor update \eqref{eq: x primal predictor update} as an example. 
Assume that ideally we have $n_1$ primal computing units dedicated to updating $y_j$, $j = 1, \dots, n_1$. To ease the argument, we write out the updating rule again here: 
\begin{equation}\label{eq:Update_y}
y_j^{k+1} = \Pi_{\mathbb{X}_j} \left(x_j^k - \rho \left[P_0 \mathbf{x}^k + \mathbf{q}_0 + \sum_{i=1}^{m_1} \lambda_i^k \big(P_i \mathbf{x}^k + \mathbf{q}_i\big) + A^T \bm{\gamma}^k\right]_j\right). 
\end{equation}
In each unit $j$, only the values of $x_j^k$, $[P_i]_j$, $[\mathbf{q}_i]_j$ for $i = 0, 1, \dots, m_1$ and $[A]_j$ are needed to be stored locally. 
To calculate $[P_i \mathbf{x}^k]_j$ for $i = 0, 1, \dots, m_1$, there is no need to store the entire $P_i$ matrices on each computing unit. 
Instead, the value of $[P_i \mathbf{x}^k]_j$ can be obtained using MPI to communicate among all primal computing units, where only one column of the $P_i$ matrices (and $x_j^k$) is stored locally. Here we use a simple example to illustrate the mechanism. Let $n_1 = 3$, Fig~\ref{fig: matrix_vector_multiplication}
\begin{figure}[htb]
\begin{subfigure}{.556\textwidth}
\centering
\includegraphics[width=0.99\linewidth]{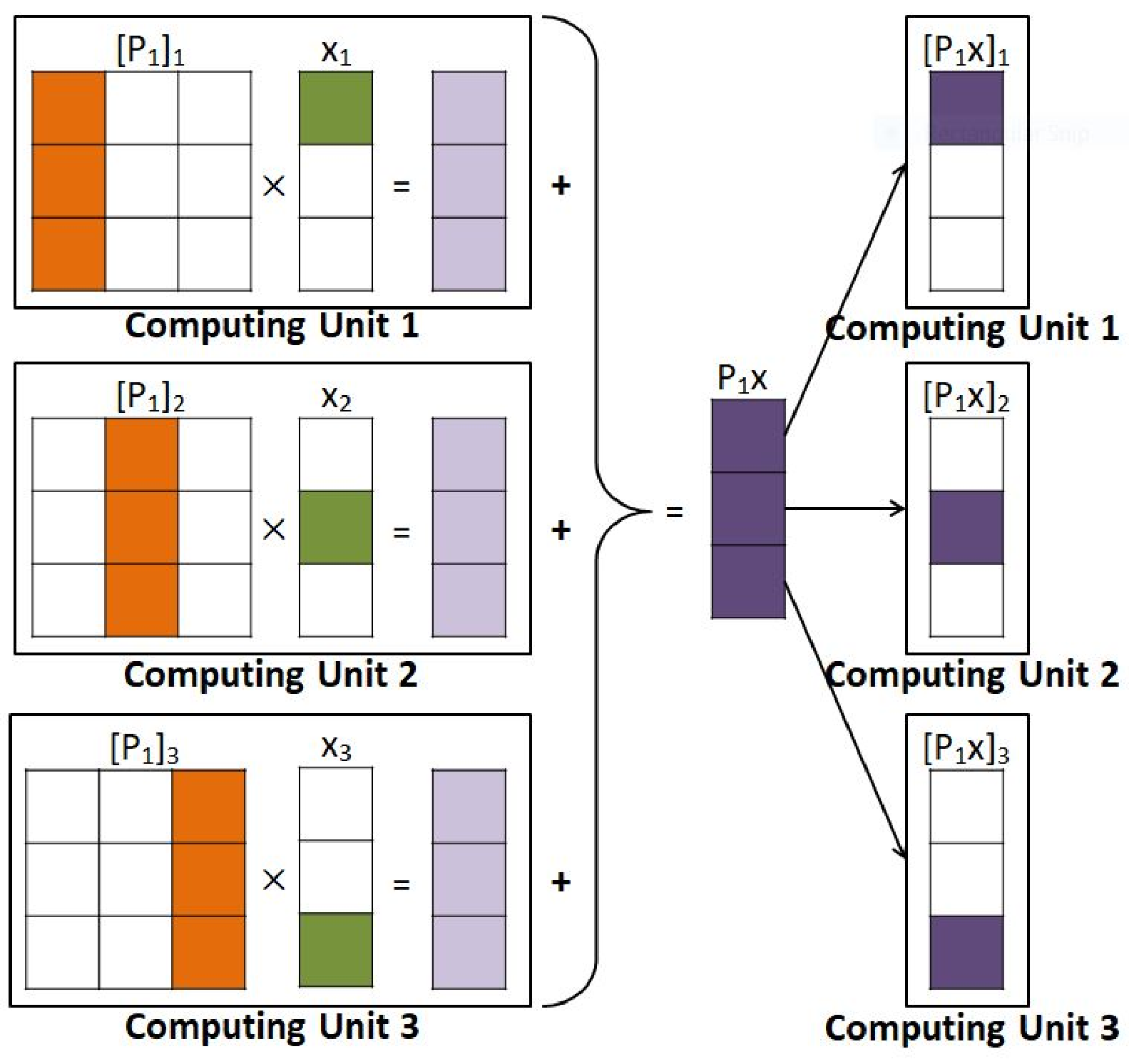}  
\caption{Calculating $[P_1 \mathbf{x}^k]_j$ for each computing unit $j$.}
\label{fig: matrix_vector_multiplication}
\end{subfigure}
\begin{subfigure}{.436\textwidth}
\centering
\includegraphics[width=0.99\linewidth]{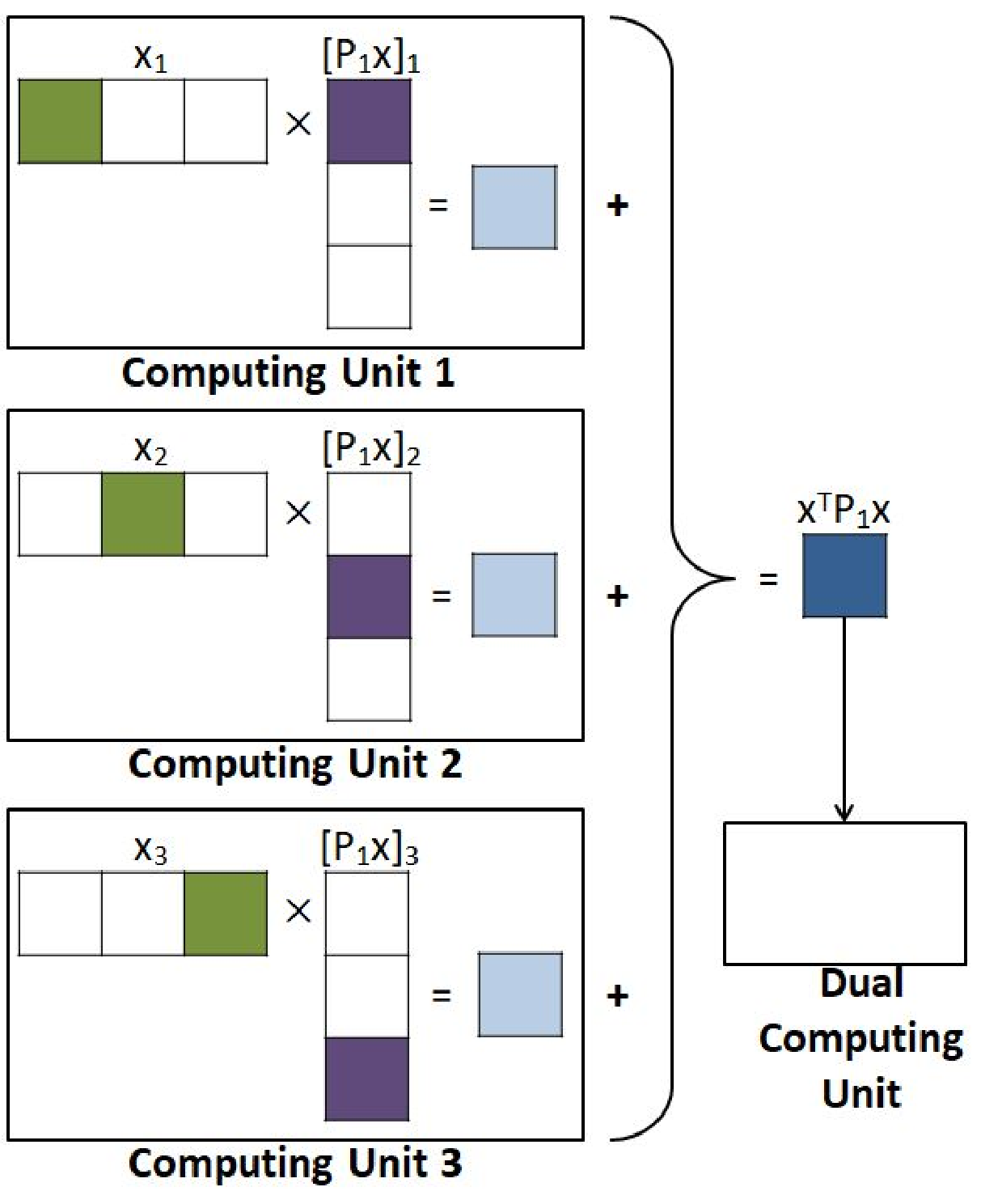}  
\caption{Calculating $(\mathbf{x}^k)^T P_1 \mathbf{x}^k$.}
\label{fig: vector_matrix_vector_multiplication}
\end{subfigure}
\caption{Illustrations of matrix-vector multiplications using MPI functions.}
\end{figure}
shows how $[P_1 \mathbf{x}^k]_j$, $j = 1, 2, 3$ are calculated in a distributed fashion through MPI. First, each computing unit $j$ completes a subtask of multiplying $[P_1]_j$ and $x_j^k$ using their locally stored information; then the intermediate results are summed up using the MPI\_Reduce function in a root process  to get the value of $P_1 \mathbf{x}^k$. Each component of the vector $P_1 \mathbf{x}^k$ is then sent back to the corresponding computing unit $j$ using the MPI\_Scatter function. After obtaining the values of $[P_i \mathbf{x}^k]_j$ for $i = 0, 1, \dots, m_1$ in this way, the update step \eqref{eq:Update_y} can be carried out upon receiving the values of $(\lambda_1^k, \dots, \lambda_{m_1}^k)$ and $\bm{\gamma}^k$ from other dual computing units dedicated for updating the dual variables using MPI\_Send and MPI\_Recv functions, with the fact that $[A^T \bm{\gamma}^k]_j = [A]_j^T \bm{\gamma}^k$. Such a feature will be particularly beneficial for solving high-dimension QCQPs from real world applications, as in many such cases the number of variables ($n_1$ for $\mathbf{x}$ and $n_2$ for $\mathbf{u}$) can be enormous. 
\par In the $3$-dimension example shown in Fig~\ref{fig: matrix_vector_multiplication}, once each $[P_1 \mathbf{x}^k]_j$ is received by computing unit $j$ for $j = 1, 2, 3$, a subtask of multiplying $x_j^k$ and $[P_1 \mathbf{x}^k]_j$ is needed to calculate the value of $(\mathbf{x}^k)^T P_1 \mathbf{x}^k$ for dual update, such as in \eqref{eq: dual predictor update}:
\begin{equation}
\mu_i^{k+1} = \Pi_{\mathbb{R}_+}\left(\lambda_i^k + \rho \left[\frac{1}{2} (\mathbf{x}^k)^T P_i \mathbf{x}^k + \mathbf{q}_i^T \mathbf{x}^k + \mathbf{c}_i^T \mathbf{u}^k + r_i\right]\right).
\end{equation}
Such a process is illustrated in Fig~\ref{fig: vector_matrix_vector_multiplication}, which shows that the locally calculated intermediate results are summed up using the MPI\_Reduce function and sent to the corresponding dual computing unit. Other matrix-vector (and vector-vector) multiplications in the update steps of Algorithm~\ref{alg: PC^2PM} can all be calculated in a similar fashion.\footnote{For more information, we refer the readers to our implementation codes programmed in C available online at \href{https://github.com/BigRunTheory/A-Distributed-Algorithm-for-Large-scale-Convex-QCQPs}{https://github.com/BigRunTheory/A-Distributed-Algorithm-for-Large-scale-Convex-QCQPs}.}
\par Next, we examine the speedup of using multiple compute nodes for parallel distributed computing. We run the PC$^2$PM algorithm on a multi-node computer cluster, where each node has multiple cores ($24$ cores in our case). MPI is used to communicate among all parallel processes mapped to cores belonging to different nodes. For illustration purpose, we focus on a single-constraint convex QCQP:
\begin{equation}\label{eq: Single-Constraint QCQP Problem Form}
\begin{aligned}
\underset{\mathbf{x} \in \mathbb{R}^{n_1}}{\text{minimize}} \quad &\frac{1}{2} \mathbf{x}^{T} P_0 \mathbf{x} + \mathbf{q}_0^T \mathbf{x} + r_0 \\
\text{subject to} \quad &\frac{1}{2} \mathbf{x}^{T} P_1 \mathbf{x} + \mathbf{q}_1^T \mathbf{x} + r_1 \leq 0, \qquad (\lambda_1)
\end{aligned}
\end{equation}
which does not contain the block of decision variable $\mathbf{u}$ or linear constraint $A \mathbf{x} + B \mathbf{u} = \mathbf{b}$. We test the PC$^2$PM algorithm for solving \eqref{eq: Single-Constraint QCQP Problem Form} on a randomly generated data set with $P_i \in \mathbb{R}^{n_1 \times n_1}$, $\mathbf{q}_i \in \mathbb{R}^{n_1}$ and $r_i \in \mathbb{R}$ for $i = 0, 1$. The dimension $n_1$ is set at $2^{14} \approx 1.6 \times 10^4$. Each matrix $P_i$ is randomly generated as a symmetric PSD matrix in the form of $P_i = Q^T D Q$, where $Q \in \mathbb{R}^{n_1 \times n_1}$ is a randomly generated orthogonal matrix, and $D = diag(d_1, \dots, d_{n_1})$ is a randomly generated diagonal matrix with all non-negative entries. Since $d_1, \dots, d_{n_1}$ are also the eigenvalues of each $P_i$, we denote the largest eigenvalue as $d_{\max}$ and the smallest eigenvalue as $d_{\min}$, and hence make the condition number of each matrix as $\kappa(P_i) = \frac{d_{max}}{d_{min}}$. Then, the remaining diagonal entries are randomly generated from the range $[d_{\min}, \ d_{\max}]$. We test different condition numbers for all matrices, increasing from $10^2$ to $10^6$. The values of the smallest eigenvalue $d_{min}$ and the largest eigenvalue $d_{max}$ are listed in Table~\ref{tab: value_d}.
\begin{table}[htb]
\renewcommand{\arraystretch}{1.4}
\centering
\footnotesize
\setlength\tabcolsep{10.0pt}
\begin{tabular}{c|cc} \hline
\textbf{cond. num.}&\textbf{smallest eigenvalue}&\textbf{largest eigenvalue} \\
$\Big(\kappa = \frac{d_{max}}{d_{min}}\Big)$&$\big(d_{min}\big)$&$\big(d_{max}\big)$ \\ \hline
$10^2$&$0.1$&$10.0$ \\ 
$10^4$&$0.003$&$30.0$ \\ 
$10^6$&$0.00002$&$20.0$ \\ \hline
\end{tabular}
\caption{Values of the smallest eigenvalue $d_{min}$ and the largest eigenvalue $d_{max}$ for different condition numbers.}
\label{tab: value_d}
\end{table}
The components of each vector $\mathbf{q}_i$ are randomly generated from the range $[-1.0, \ 1.0]$, and each scalar $r_i$ is randomly generated from the range $[-1.0, \ 0.0]$ to guarantee the feasibility of the constraint sets. 
\par Since the number of Lagrangian multipliers is $1$, the number of dual computing units $n_{\text{dual-comp}}$ is also fixed as $1$. The tasks of updating $n_1$ components of the primal decision variables $\mathbf{x}$ and $\mathbf{y}$ are evenly distributed among all the primal computing units with the number $n_{\text{primal-comp}}$ varying from $1$ to $256$ for comparison purpose. Since each computing unit occupies a single core, the total number of cores used is $n_{\text{core}} = n_{\text{primal-comp}} + n_{\text{dual-comp}}$. The number of nodes needed is calculated as $n_{\text{node}} = \lceil n_{\text{core}}/24 \rceil$ (where $24$ is the number of cores per node). The elapsed wall-clock time $T$ used by the PC$^2$PM algorithm to converge with a tolerance $\tau^{\text{PC}^2\text{PM}} = 10^{-3}$, corresponding to different condition numbers, is listed in Table~\ref{tab: Speedup}, along with the calculated objective function values. (The specific stopping criteria are given in Section~\ref{subsec: Stopping}.)
\begin{table}[htb]
\renewcommand{\arraystretch}{1.2}
\centering
\footnotesize
\setlength\tabcolsep{18.0pt}
\begin{tabular}{c|c|c|c|c} \hline
\multicolumn{5}{c}{\textbf{PC$^2$PM using multiple nodes} (max. 24 cores per node)} \\
\multicolumn{5}{c}{($\tau^{\text{PC}^2\text{PM}} = 10^{-3}$)} \\ \hline
\multirow{2}{*}{\textbf{$\mathbf{n}_{\text{node}}$}}&\multirow{2}{*}{\textbf{$\mathbf{n}_{\text{core}}$}}&\multicolumn{3}{c}{\textbf{elapsed wall-clock time} (hour)} \\
&&$\kappa = 10^2$&$\kappa = 10^4$&$\kappa = 10^6$ \\ \hline
$1$&$1 + 1$&$42.08$&$51.72$&$70.45$ \\
$1$&$2 + 1$&$22.79$&$28.11$&$38.28$ \\
$1$&$4 + 1$&$14.17$&$17.48$&$23.89$ \\
$1$&$8 + 1$&$8.85$&$10.87$&$14.70$ \\
$1$&$16 +1$&$5.60$&$6.91$&$9.62$ \\
$2$&$32 + 1$&$4.82$&$6.13$&$8.26$ \\
$3$&$64 + 1$&$4.33$&$5.42$&$7.51$ \\
$6$&$128 + 1$&$3.94$&$4.96$&$6.95$ \\
$11$&$256 + 1$&$5.51$&$6.84$&$9.32$ \\ \hline
\multicolumn{2}{c}{\textbf{obj. val.}}&$-420.621$&$-214.389$&$-324.428$ \\ \hline
\end{tabular}
\caption{Elapsed wall-clock time used by PC$^2$PM for solving the single-constraint convex QCQP \eqref{eq: Single-Constraint QCQP Problem Form} with different condition numbers ($\kappa$).}
\label{tab: Speedup}
\end{table} 
\par The computational speedup $S$ is defined as the ratio of the elapsed run time taken by a serial code to that taken by a parallel code for solving the same problem. More specifically, $S$ is defined as
\begin{equation}
S \coloneqq \frac{T(1+1)}{T(n_{\text{primal-comp}} + 1)}, \quad  n_{\text{primal-comp}} \geq 2.
\end{equation}
The speedup of solving \eqref{eq: Single-Constraint QCQP Problem Form} is shown in Fig~\ref{fig: Speedup}.
\begin{figure}[htb]
\centering
\begin{subfigure}{.33\textwidth}
\centering
\includegraphics[width=1.0\linewidth]{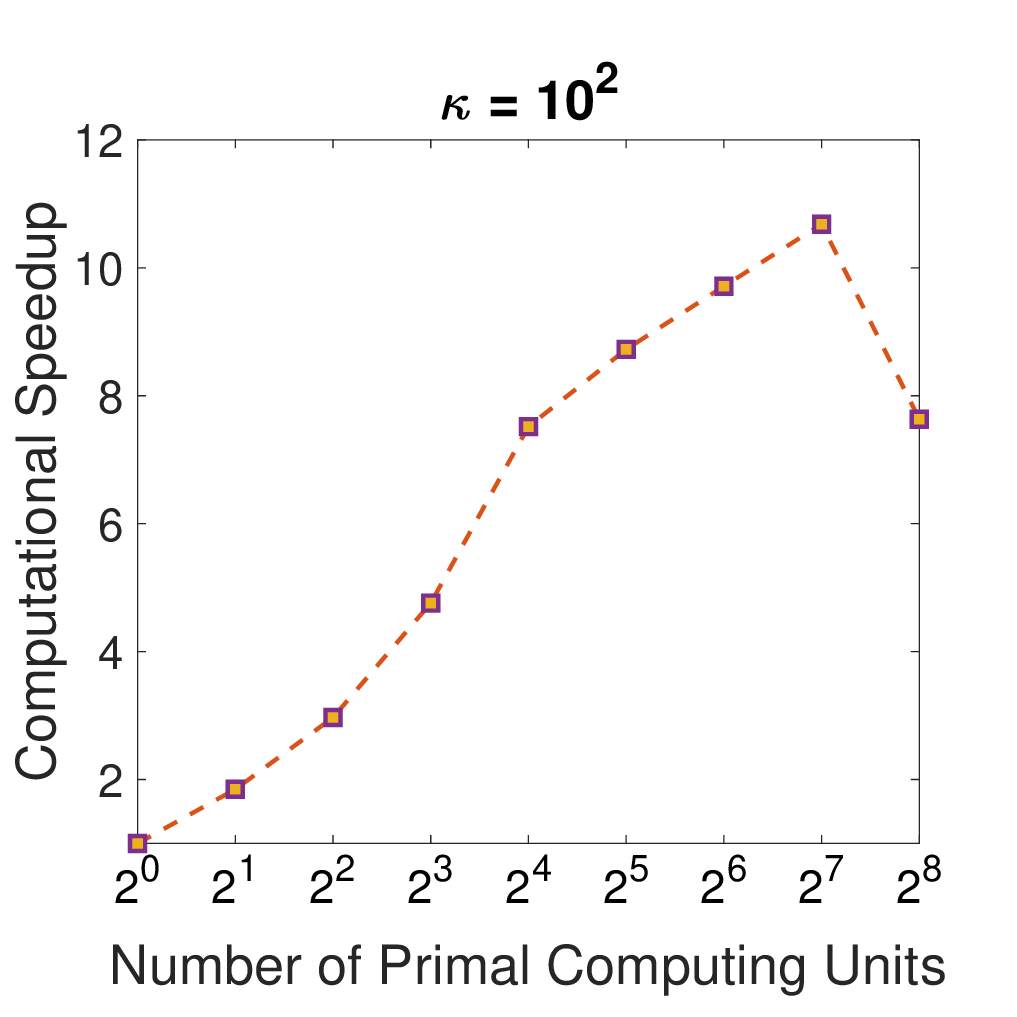}
\end{subfigure}%
\begin{subfigure}{.33\textwidth}
\centering
\includegraphics[width=1.0\linewidth]{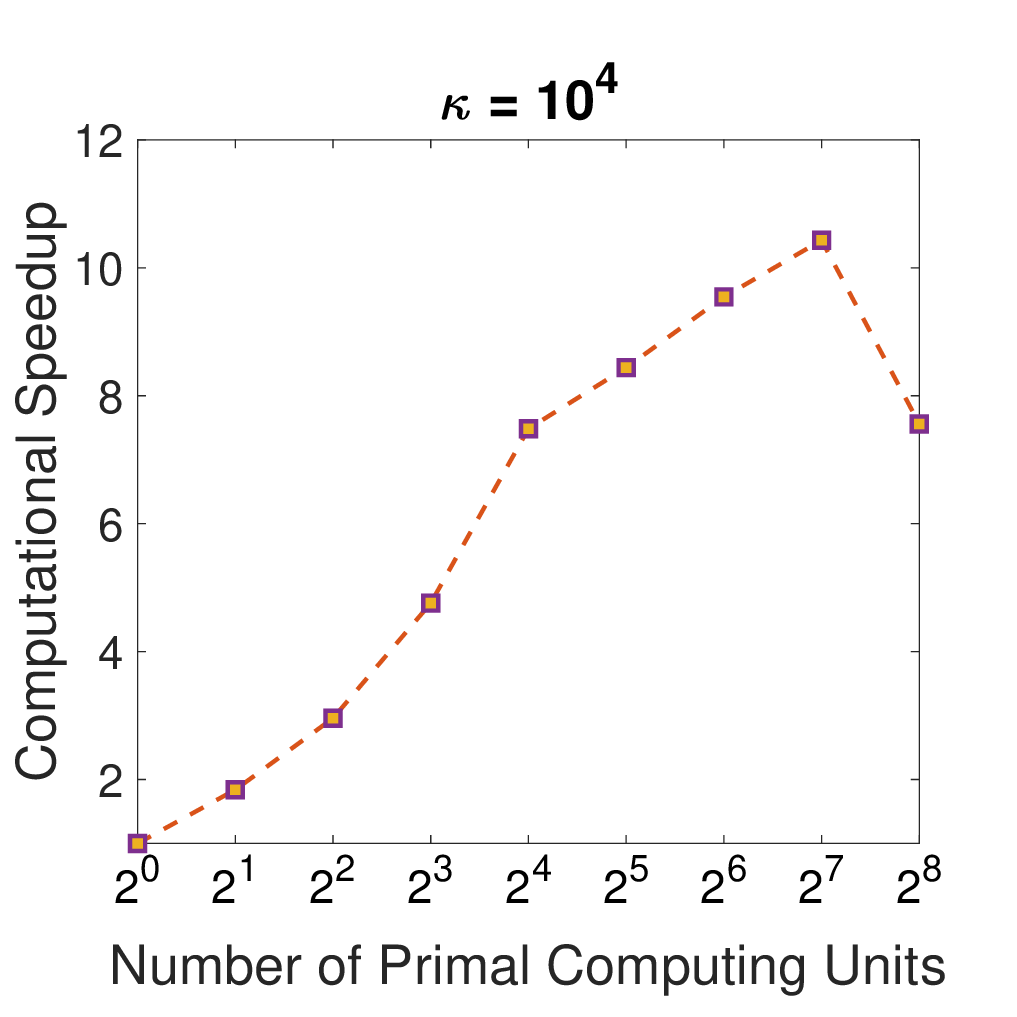}
\end{subfigure}%
\begin{subfigure}{.33\textwidth}
\centering
\includegraphics[width=1.0\linewidth]{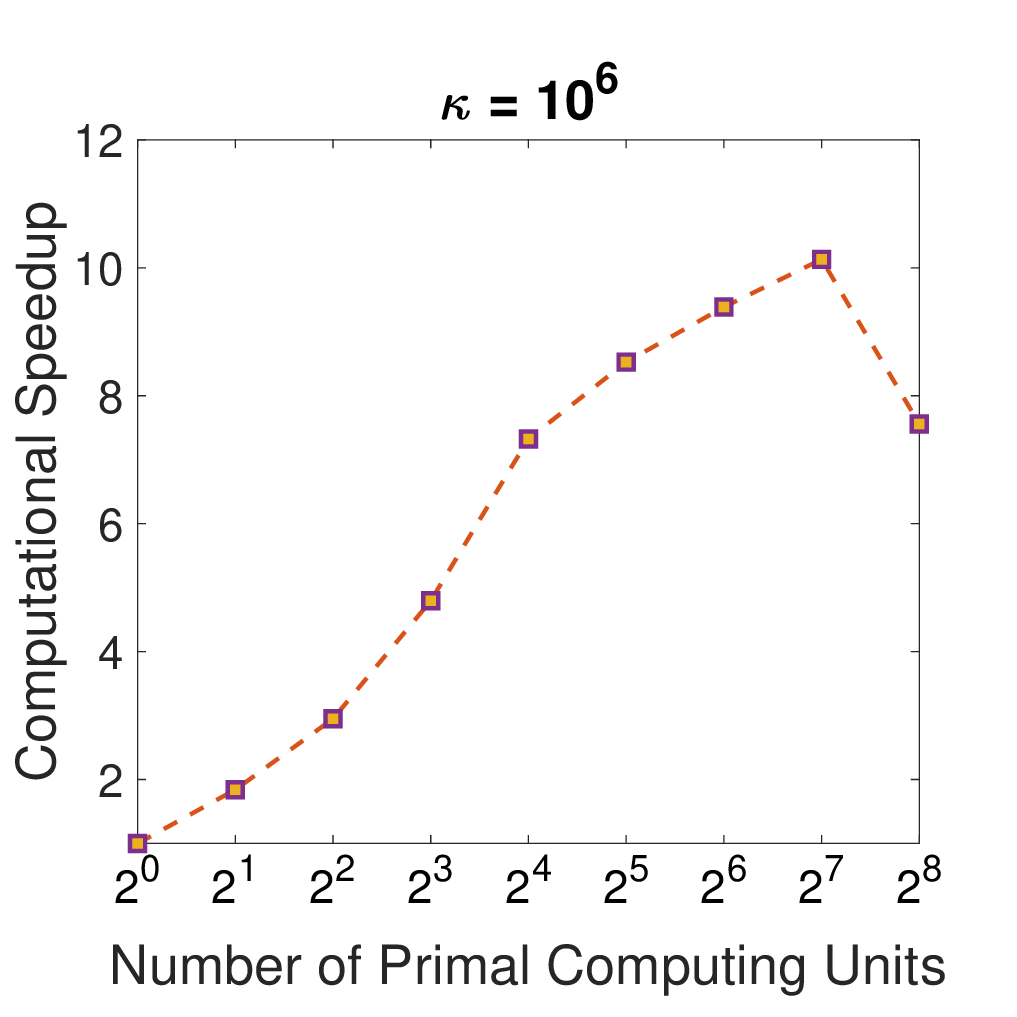}
\end{subfigure}%
\caption{Computational speedup of PC$^2$PM for solving the single-constraint convex QCQP \eqref{eq: Single-Constraint QCQP Problem Form} with different condition numbers ($\kappa$).}
\label{fig: Speedup}
\end{figure}
For this specific case, parallel computing achieved linear speedup initially. However, due to communication overhead, the speedup plateaued (or even decreased) when the number of computing units is too high. As such, we suggest that a proper number of computing units needs to be carefully chosen when implementing the PC$^2$PM algorithm to balance between computational speedup and communication overhead.
\subsection{Adaptive Step Size with Auto-learned Allocation Weights}\label{subsec: adp_step_size}
In establishing the global convergence of the PC$^2$PM algorithm, it is not specified how the values of $\epsilon_s$, $s=1, \ldots, 8$ are chosen in order to calculate the eight components $\rho_1$ -- $\rho_8$ (see the update rules in Appendix~\ref{app:StepRule}). 
Here we develop a practical rule to help determine the values of $\epsilon_s$'s along the iterations. 
The rule may also help accelerate the algorithm's performance, based on our numerical experiments.  
\par Generally speaking, for first-order algorithms, to which the PC$^2$PM algorithm also belongs,  the larger value a step size could take, the fewer number of iterations the algorithms would need to converge. For the step-size formula \eqref{eq: adaptive step size}, it is easy to observe that the value of each $\rho_s$ increases when the corresponding $\epsilon_s$ increases. However, the $\epsilon_s$'s cannot be too large as their summation is bounded by $1 - \epsilon_0 \leq 1$. A naive way to allocate the value of each $\epsilon_s$ is to evenly distribute the upper bound of their summation; that is, $\epsilon_s = \frac{1}{8} (1 - \epsilon_0)$ for $s = 1, \dots, 8$, throughout the convergence. Alternatively, we introduce a weight variable $w_s > 0$ for each $\epsilon_s$. At the beginning of the algorithm, they are all initialized as $1$, indicating an even allocation of the values of the $\epsilon_s$'s. At each iteration $k = 1, 2, \ldots$, we calculate the values of $\epsilon_1^k, \dots, \epsilon_8^k$ based on the following formulation:
\begin{equation}\label{eq: epsilon update}
\epsilon_s^k = \frac{w_s^k}{\sum_{s=1}^8 w_s^k} (1 - \epsilon_0), \quad s = 1, \dots, 8.
\end{equation}
After the step size $\rho^{k+1}$ is determined by $\min\{\rho_1^k, \dots, \rho_8^k\}$ according to \eqref{eq: adaptive step size}, the values of the weights $w_1^{k+1}, \dots, w_8^{k+1}$ are updated based on the ratio of $\rho^{k+1}$ to each $\rho_s^k$: 
\begin{equation}\label{eq: weight update}
w_s^{k+1} = \frac{\rho^{k+1}}{\rho_s^k} w_s^k, \quad s = 1, \dots, 8.
\end{equation}
The idea of the above updating rule is to make sure that all the values of the $\rho_s$'s will have a chance to be increased, avoiding the possibility that a particularly small $\rho_s$ would always be chosen to determine the step size $\rho^{k+1}$, which would slow down the whole algorithm. 
\par For illustration purpose, we use the PC$^2$PM algorithm with the step-size updating rule of \eqref{eq: adaptive step size}, \eqref{eq: epsilon update} and \eqref{eq: weight update} to solve the same single-constraint convex QCQP \eqref{eq: Single-Constraint QCQP Problem Form} in the previous subsection. We test the algorithm on a data set of matrix $P_i$, vector $\mathbf{q}_i$ and scalar $r_i$ randomly generated in the same way as in the previous subsection for $i = 0, 1$, but with $n_1 = 1024$ and $\kappa(P_i) = 100$. The algorithm is implemented on a single core as a serial code. (Note that the parallel computing has nothing to do with the number of iterations for the PC$^2$PM algorithm to converge.) Since it contains neither the decision variable $\mathbf{u}$ nor the linear equality constraint $A \mathbf{x} + B \mathbf{u} = \mathbf{b}$, only $\rho_1^k, \dots, \rho_5^k$ need to be calculated at each iteration. We compare the performance of the algorithm  using equal weights versus using the adaptive weights in \eqref{eq: weight update}. The number of iterations and the elapsed wall-clock time used by the algorithm to converge with a tolerance $\tau^{\text{PC}^2\text{PM}} = 10^{-4}$ are listed in Table~\ref{tab: Comparison of Allocations of Epsilons}. 
\begin{table}[htb]
\renewcommand{\arraystretch}{1.2}
\centering
\footnotesize
\setlength\tabcolsep{10.0pt}
\begin{tabular}{c|cc|cc} \hline
&\multicolumn{2}{c}{\textbf{Equal Weight Allocation}}&\multicolumn{2}{c}{\textbf{Adaptive Weight Allocation}} \\ \hline
\multirow{2}{*}{\textbf{value of} $\epsilon_0$}&\multirow{2}{*}{\textbf{num. iter.}}&\textbf{time}&\multirow{2}{*}{\textbf{num. iter.}}&\textbf{time} \\
&&(second)&&(second) \\ \hline
$0.5$&$59496$&$123$&$28272$&$62$ \\ \hline
$10^{-1}$&$33055$&$74$&$15713$&$36$ \\ \hline
$10^{-2}$&$30050$&$68$&$14287$&$32$ \\ \hline
$10^{-3}$&$29780$&$68$&$14158$&$32$ \\ \hline
$10^{-4}$&$29753$&$68$&$14145$&$32$ \\ \hline
$10^{-5}$&$29750$&$67$&$14143$&$31$ \\ \hline
$10^{-6}$&$29750$&$67$&$14143$&$31$ \\ \hline
$0$&$29750$&$66$&$14143$&$31$ \\ \hline
diminishing&$30094$&$70$&$14378$&$33$ \\ \hline
\end{tabular}
\caption{Number of iterations and elapsed wall-clock time used by PC$^2$PM for solving a single-constraint convex QCQP \eqref{eq: Single-Constraint QCQP Problem Form} with different settings of $(\epsilon_0, \dots, \epsilon_5)$.}
\label{tab: Comparison of Allocations of Epsilons}
\end{table}
We also test using different values of $\epsilon_0$, including a fixed value varying from $0.5$, $10^{-1}, \dots, 10^{-6}$ to $0$ and a diminishing value of $\frac{1}{\sqrt{k+1}}$. The numerical results of this specific instance suggest that by using auto-learned allocation weights, the number of iterations for the algorithm to converge is cut by more than half. Comparing each rows, we also observe that the smaller the value of $\epsilon_0$ is, the faster the algorithm converges. Additionally, for the row of $\epsilon_0 = 0$, we plot out the comparison of values of the resulting step size $\rho^k$ at each iteration $k$ using different weight allocations, shown in Fig~\ref{fig: rhos}.
\begin{figure}[htb]
\centering
\includegraphics[scale = 0.32]{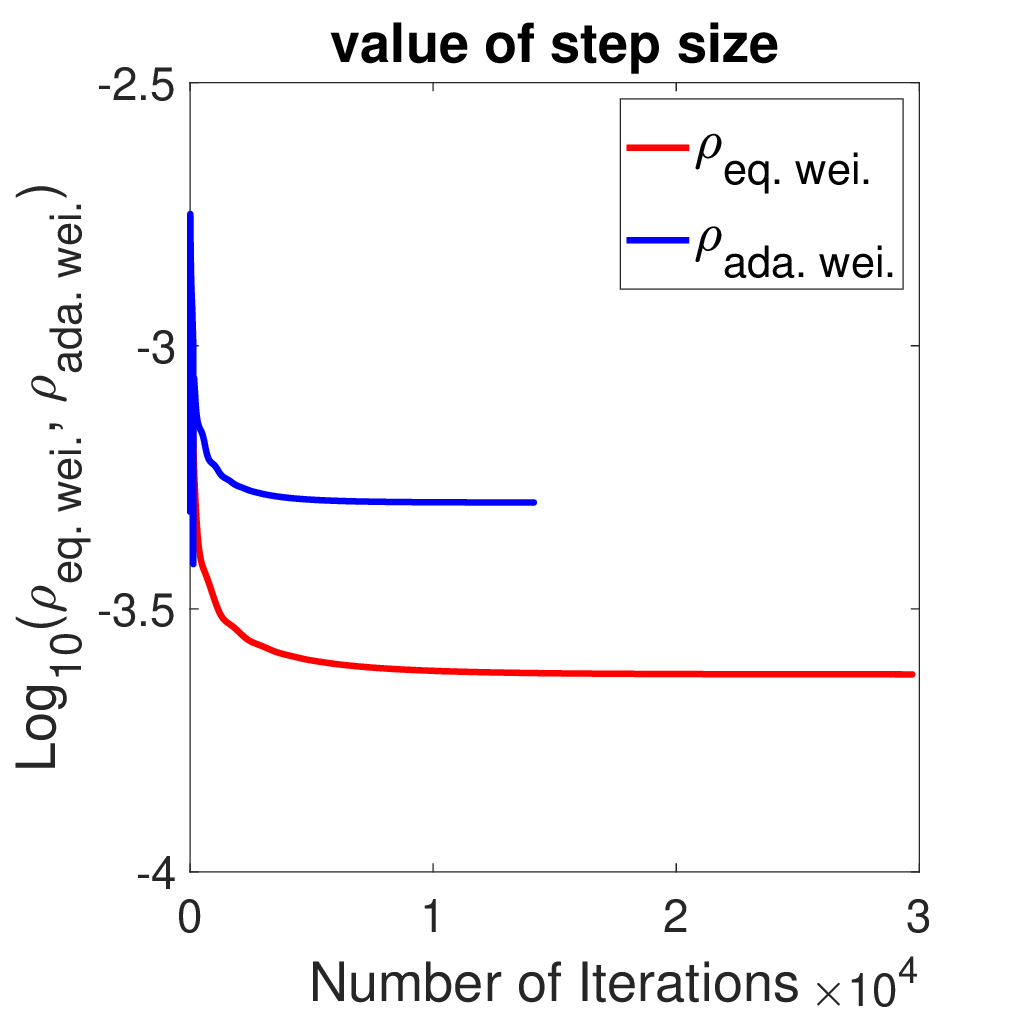}
\caption{Comparison of values of the step size using different weight allocations when $\epsilon_0 = 0$.}
\label{fig: rhos}
\end{figure}
We observe that using the adaptive weights, the step size quickly converges to a much larger value than using the equal weights, which explains the greatly reduced number of iterations.
\subsection{Stopping Criteria}\label{subsec: Stopping}
Since we consider a convex QCQP and assume that the Slater's CQ holds, the first-order optimality conditions (aka the KKT conditions) are both necessary and sufficient. More specifically, for an optimal solution $(\mathbf{x}^*, \mathbf{u}^*)$ of the QCQP \eqref{eq: QCQP Problem Form} and its corresponding dual solution $(\bm{\lambda}^*, \bm{\gamma}^*)$, the following conditions are satisfied: \\
$\bullet$ \textbf{Stationarity}:
\begin{subequations}
\begin{align}
-\big[P_0 \mathbf{x}^* + \mathbf{q}_0 + \sum_{i=1}^{m_1}  		\lambda_i^* (P_i \mathbf{x}^* + \mathbf{q}_i) + A^T   \bm{\gamma}^*\big]_j &\in  \mathcal{N}_{\mathbb{X}_j}(x_j^*), \quad j = 1, \dots, n_1 \label{eq:Opt_x} \\
\mathbf{c}_0 + \sum_{i=1}^{m_1} \lambda_i^* \mathbf{c}_i + B^T \bm{\gamma}^* &=  \mathbf{0} \label{eq:Opt_u}
\end{align}
\end{subequations}
$\bullet$ \textbf{Complementarity}:
\begin{equation}
\lambda_i^* \Big[\frac{1}{2} (\mathbf{x}^*)^T P_i \mathbf{x}^* + \mathbf{q}_i^T \mathbf{x}^* + \mathbf{c}_i^T \mathbf{u}^* + r_i\Big] = 0, \quad i = 1, \dots, m_1 \label{eq:complementary}
\end{equation}
$\bullet$ \textbf{Primal Feasibility}:
\begin{subequations}
\begin{align}
\frac{1}{2} (\mathbf{x}^*)^T P_i \mathbf{x}^* + \mathbf{q}_i^T \mathbf{x}^* + \mathbf{c}_i^T \mathbf{u}^* + r_i &\leq 0, \quad \quad i = 1, \dots, m_1 \label{eq:pri_fea_1} \\
A \mathbf{x}^* + B \mathbf{u}^* - \mathbf{b} &=  \mathbf{0} \label{eq:pri_fea_2}
\end{align}
\end{subequations}
$\bullet$ \textbf{Dual Feasibility}:
\begin{equation}
\lambda_i^* \geq 0, \quad i = 1, \dots, m_1 \label{eq:dual_fea}
\end{equation}

\noindent Conversely, any primal-dual pair $(\mathbf{x}^*, \mathbf{u}^*;  \bm{\lambda}^*, \bm{\gamma}^*)$ satisfying the above conditions is optimal to the primal and dual of the QCQP \eqref{eq: QCQP Problem Form}, respectively.  Based on the optimality conditions \eqref{eq:Opt_x} -- \eqref{eq:dual_fea}, we choose stopping criteria for our algorithm to measure stationarity, as well as complementarity and primal feasibility. (Note that dual feasibility is always maintained by the projection step in each iteration, as shown in \eqref{eq: dual corrector update}.) More specifically, at each iteration $k$, we measure the following two residuals:
\begin{align}
res_1^k = &\sqrt{\frac{1}{n_1 + n_2} \Big[\sum_{j=1}^{n_1} \big(res_1^k\_x_j\big)^2 + \lVert \mathbf{c}_0 + \sum_{i=1}^{m_1} \lambda_i^k \mathbf{c}_i + B^T \bm{\gamma}^k \rVert_2^2\Big]},\ \mathrm{and} \label{eq:res_1}\\
res_2^k = &\sqrt{
\begin{aligned}
&\frac{1}{m_1 + m_2} \Big[\sum_{i=1}^{m_1} \big[\lambda_i^k \big\lvert \frac{1}{2} (\mathbf{x}^k)^T P_i \mathbf{x}^k + \mathbf{q}_i^T \mathbf{x}^k + \mathbf{c}_i^T \mathbf{u}^k + r_i \big\rvert \big]^2 \\
&+ \lVert A \mathbf{x}^k + B \mathbf{u}^k - \mathbf{b} \rVert_2^2\Big]
\end{aligned}
}, \label{eq:res_2}
\end{align}
where $res_1^k\_x_j$ in \eqref{eq:res_1} depends on the actual form of the constraint set $\mathbb{X}$. Again, for box constrains $0 \leq x_j \leq  \bar{X}_j$, $j = 1, \ldots, n_1$, we have that  
\begin{equation}
res_1^k\_x_j \colon=
\left\{
\begin{array}{ll}
\min\{0, \big[\mathbf{grad}_{\mathbf{x}}\big]_j\}, \quad &\text{if } x_j^k = 0 \\[5pt]
\big[\mathbf{grad}_{\mathbf{x}}\big]_j, \quad &\text{if } 0 < x_j^k < \bar{X}_j \\[5pt]
\max\{0, \big[\mathbf{grad}_{\mathbf{x}}\big]_j\}, \quad &\text{if } x_j^k = \bar{X}_j
\end{array}
\right.,
\end{equation}
and $\mathbf{grad}_{\mathbf{x}} = P_0 \mathbf{x}^k + \mathbf{q}_0 + \sum_{i=1}^{m_1} \lambda_i^k (P_i \mathbf{x}^k + \mathbf{q}_i) + A^T \bm{\gamma}^k$. This comes from the rewriting of the optimality condition \eqref{eq:Opt_x} using $\mathbf{grad}_{\mathbf{x}}$ as:
\begin{equation}
-\big[\mathbf{grad}_{\mathbf{x}}^*\big]_j \in \mathcal{N}_{\mathbb{X}_j}(x_j^*),
\end{equation}
where 
\begin{equation}
\mathcal{N}_{\mathbb{X}_j}(x_j^*) \colon=
\left\{
\begin{array}{ll}
(-\infty, 0], \quad &\text{if } x_j^* = 0 \\[5pt]
\{0\}, \quad &\text{if } 0 < x_j^* < \bar{X}_j \\[5pt]
[0, +\infty), \quad &\text{if } x_j^* = \bar{X}_j
\end{array}
\right..
\end{equation}

We terminate our algorithm  when both of the two residuals drop below a pre-specified tolerance $\tau$. Note that the residuals defined in \eqref{eq:res_1} and \eqref{eq:res_2} are based on the average residuals of all the constraints. Other forms of residual metric, such as using the maximum residual of all the constraints, can also be used.
\subsection{Infeasibility and Unboundedness}\label{subsec: infea unbound}
Lastly, we examine how the PC$^2$PM algorithm could computationally detect infeasibility or unboundedness of a convex QCQP.
\par First, we construct an infeasible QCQP as follows:
\begin{equation}\label{eq: 2-Constraint QCQP Problem Form}
\begin{aligned}
\underset{\mathbf{x} \in \mathbb{R}^{n_1}}{\text{minimize}} \quad &\frac{1}{2} \mathbf{x}^{T} P_0 \mathbf{x} + \mathbf{q}_0^T \mathbf{x} + r_0 \\
\text{subject to} \quad &\frac{1}{2} \mathbf{x}^{T} P_1 \mathbf{x} + \mathbf{q}_1^T \mathbf{x} + r_1 \leq 0, \qquad (\lambda_1) \\
&\underbrace{\frac{1}{2} \big(\mathbf{x} + \mathbf{q}_2\big)^T \big(\mathbf{x} + \mathbf{q}_2\big)}_{\geq 0} + \underbrace{\Delta}_{> 0} \leq 0. \qquad (\lambda_2)
\end{aligned}
\end{equation}
All the matrices $P_i$'s, vectors $\mathbf{q}_i$'s and scalars $r_i$'s in \eqref{eq: 2-Constraint QCQP Problem Form} are randomly generated in the same way as in Section~\ref{subsec: DS}, but with $n_1 = 1024$ and $\kappa(P_i) = 100$. Letting $\Delta$ denote a positive scalar in the second quadratic constraint in \eqref{eq: 2-Constraint QCQP Problem Form} apparently makes the problem infeasible. We decrease $\Delta$ from $100$ to $0.01$ and apply the PC$^2$PM algorithm to solve the resulting problems. The corresponding residuals are shown in Fig~\ref{fig: res_infea}. 
\begin{figure}[htb]
\centering
\begin{subfigure}{.33\textwidth}
\centering
\includegraphics[width=1.0\linewidth]{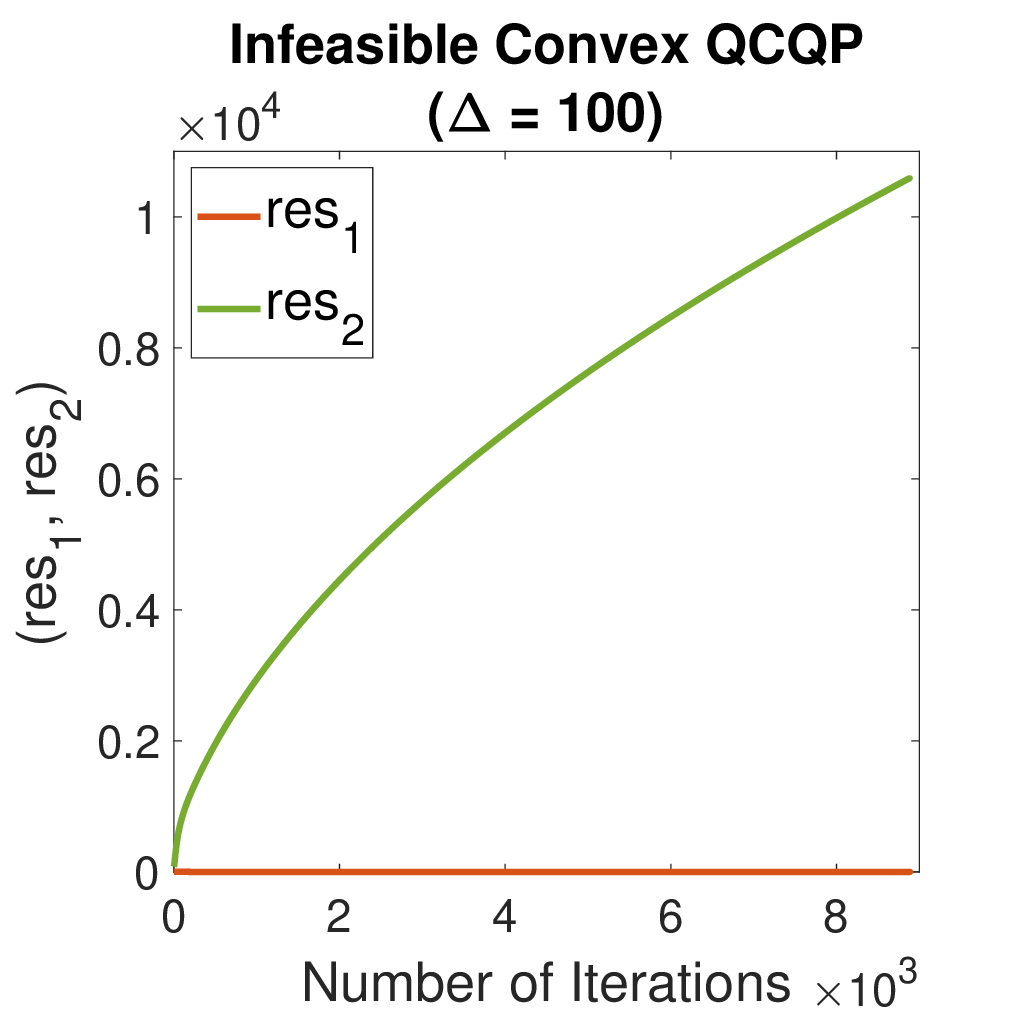}  
\end{subfigure}%
\begin{subfigure}{.33\textwidth}
\centering
\includegraphics[width=1.0\linewidth]{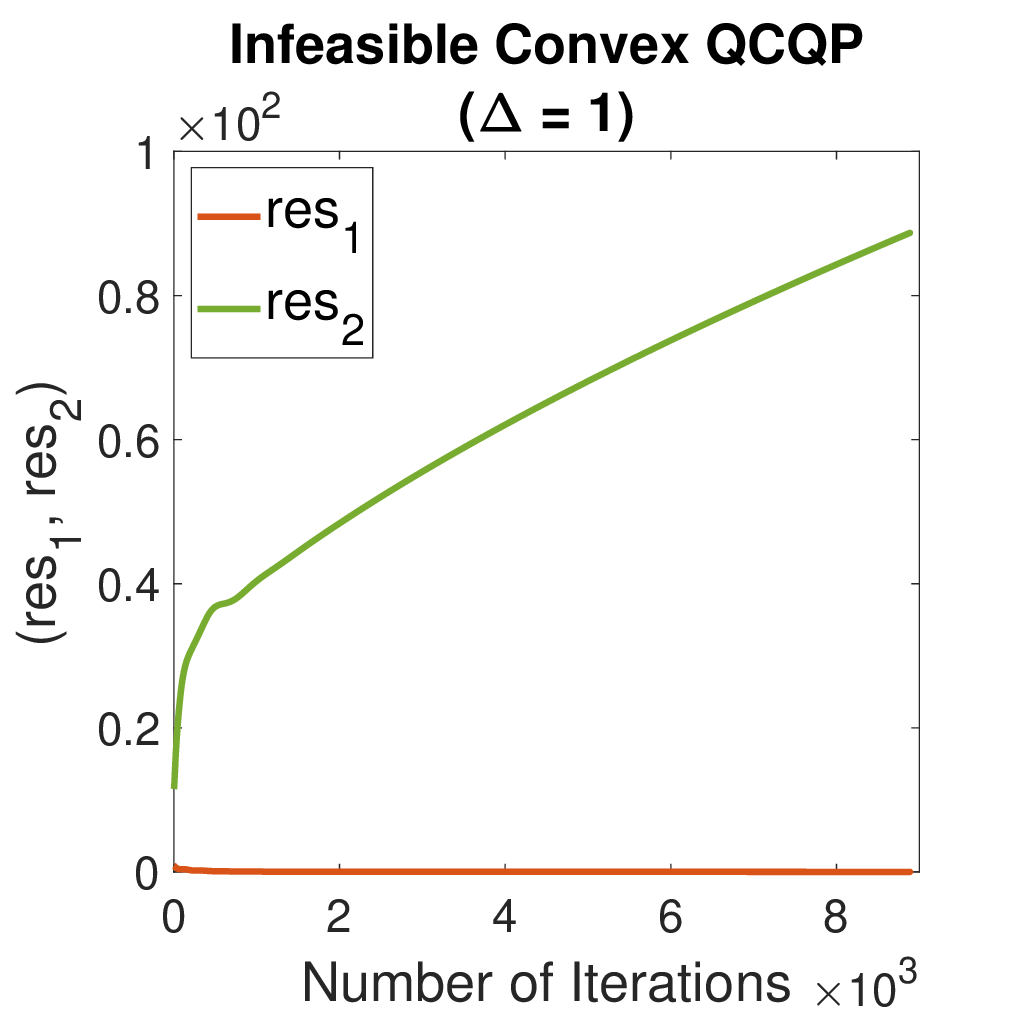}  
\end{subfigure}%
\begin{subfigure}{.33\textwidth}
\centering
\includegraphics[width=1.0\linewidth]{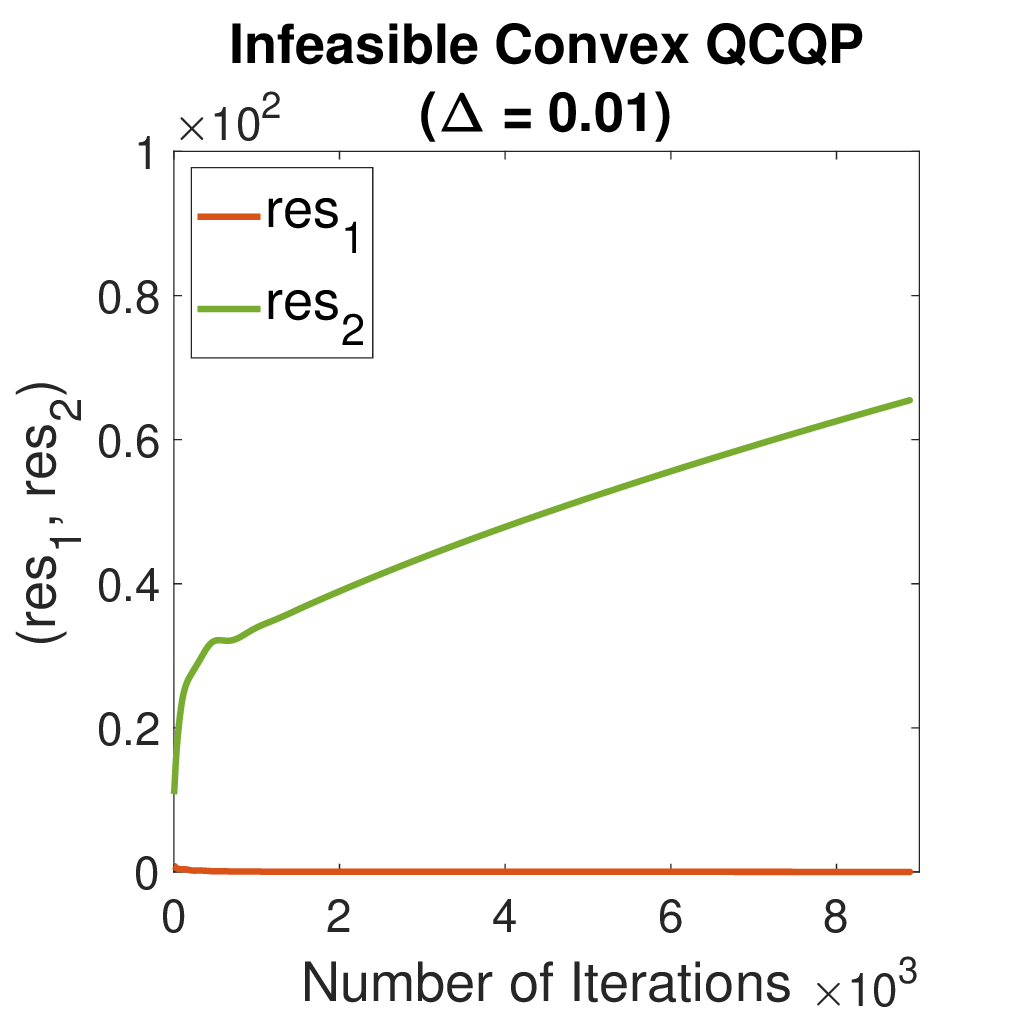}  
\end{subfigure}
\caption{Residuals of applying PC$^2$PM to solve an infeasible convex QCQP. The residual $res_2$ diverges when applying PC$^2$PM to solve \eqref{eq: 2-Constraint QCQP Problem Form}.}
\label{fig: res_infea}
\end{figure}
We observe that the residual $res_2$, measuring the complementarity and the primal feasibility as defined in \eqref{eq:res_2}, diverges for all $\Delta$'s, while the other residual $res_1$ still converges. This is a strong indication that the original problem is infeasible. 
%
\par Next, we construct an unbounded convex QCQP as follows:
\begin{equation}\label{eq: 1-Constraint QCQP Unbounded}
\begin{aligned}
\underset{\mathbf{x} \in \mathbb{R}^{n_1}}{\text{minimize}} \quad &\frac{1}{2} \mathbf{x}^{T} D_0 \mathbf{x} + \mathbf{e}_0^T \mathbf{x} + r_0 \\
\text{subject to} \quad &\frac{1}{2} \mathbf{x}^{T} D_0 \mathbf{x} + \mathbf{e}_1^T \mathbf{x} + r_1 \leq 0. \qquad (\lambda_1)
\end{aligned}
\end{equation}
The matrix $D_0 \in \mathbb{R}^{n_1 \times n_1}$ is a diagonal matrix with all but the last diagonal entry being $1$, and its last diagonal entry is set as 0, hence making it a PSD matrix. The vector $\mathbf{e}_0$ is an $n_1$-dimension vector in the form of $\mathbf{e}_0 = (0 \dots 0 \ 1)^T$. Conversely, the vector $\mathbf{e}_1$ is also $n_1$-dimension but in the form of $\mathbf{e}_1 = (1 \dots 1 \ 0)$. The dimension $n_1$ is also set as $1024$. All scalars $r_i$'s are randomly generated in the same way as in Section~\ref{subsec: DS}. It can be easily seen that the convex QCQP \eqref{eq: 1-Constraint QCQP Unbounded} is unbounded along the direction $(0, \dots, 0, -1)$. As shown in Fig~\ref{fig: res_unbound}, 
\begin{figure}[htb]
\centering
\includegraphics[scale = 0.32]{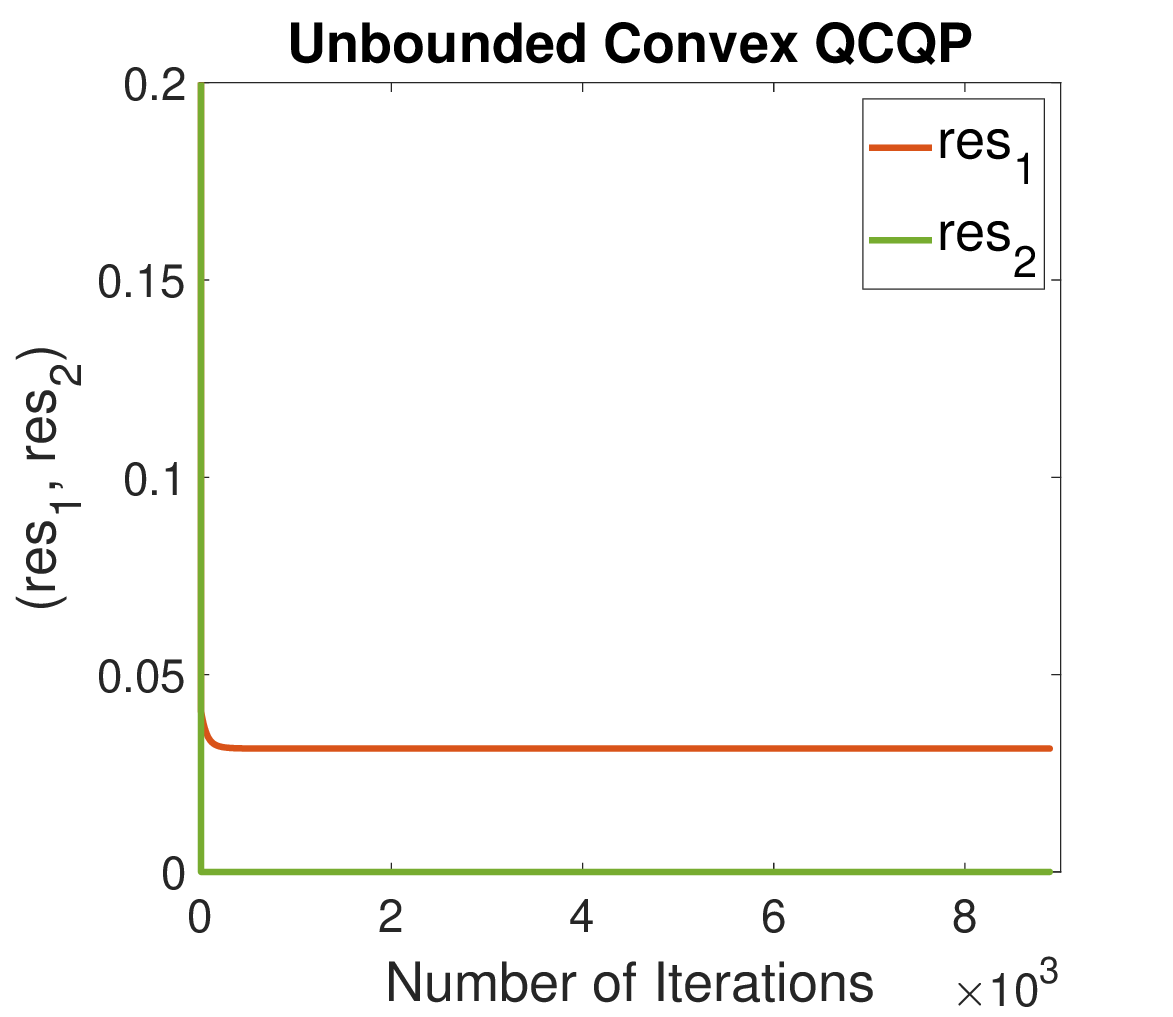}
\caption{Residuals of applying PC$^2$PM to solve an unbounded convex QCQP. The residual $res_1$ converges to a non-zero value when applying PC$^2$PM to solve \eqref{eq: 1-Constraint QCQP Unbounded}.}
\label{fig: res_unbound}
\end{figure}
when applying PC$^2$PM to solve \eqref{eq: 1-Constraint QCQP Unbounded}, we observe that both of the  residuals converge, but the residual $res_1$, measuring stationarity as defined in \eqref{eq:res_1}, converges to a non-zero value. This means that an optimal solution is not found. 
If it is known that a feasible point exists to a convex QCQP (as the example given by \eqref{eq: 1-Constraint QCQP Unbounded}) (and assume that a constraint qualification holds at the feasible point), then by Theorem \ref{thm: Converge}, if the algorithm does not find an optimal solution, it must mean that Assumption 1 is violated, which then implies that the original problem is unbounded (as an optimal solution does not exist). 
%
%
%
%
\section{Numerical Experiments}
In this section, we present more numerical results for solving high-dimension convex QCQPs using our algorithm. We first conduct numerical experiments of applying the PC$^2$PM algorithm to solve convex QCQPs of the standard form \eqref{eq: Standard QCQP Problem Form}, with randomly generated data sets of various sizes. We then solve convex QCQPs with explicit linear constraints as in \eqref{eq: QCQP Problem Form}, which naturally arise from multiple kernel learning applications. For both sets of experiments, we compare the performance of our algorithm with the current state-of-the-art commercial solver CPLEX 12.8.0, which uses the barrier optimizer for solving convex QCQPs. We implement PC$^2$PM with multiple compute nodes on Purdue University's Brown cluster using MPI, called from a C program. Each node on the cluster has two $12$-core Intel Xeon Gold ``Sky Lake" processors (that is, $24$ cores per node) and $96$ GB of memory. CPLEX 12.8.0 is also called using a C program and implemented on a single compute node (with $24$ cores). Note that CPLEX alone, as a centralized algorithm, cannot be run on multiple compute nodes using MPI, but it does allow multiple parallel threads that can be invoked by the barrier optimizer. More specifically, CPLEX has a parameter, CPXPARAM\_Threads, to call for multithread computing \cite{reference1987ibm}. When CPXPARAM\_Threads is set to be $1$, CPLEX is single threaded; when it is set to be $0$, CPLEX can use up to 32 threads, or the number of cores of the machine (with each core being a thread),  whichever is smaller. In our experiments, we always set CPXPARAM\_Threads as $0$, which gives CPLEX 24 threads (since each of our compute node has 24 cores).
\subsection{Solving Standard-Form Convex QCQPs}
We first apply PC$^2$PM to solve convex QCQPs of the standard form \eqref{eq: Standard QCQP Problem Form}, without the decision variables $\mathbf{u}$ or the explicit linear constraints $A \mathbf{x} + B \mathbf{u} - \mathbf{b} = \mathbf{0}$. The input data consist of matrix $P_i$, vector $\mathbf{q}_i$ and scalar $r_i$ for $i = 0, 1, \dots, m_1$, all of which are randomly generated in the same way as in Section~\ref{subsec: DS}. The decision variable's dimension $n_1$ is fixed as $2^{14} \approx 1.6 \times 10^4$, and the number of constraints $m_1$ increases from $1$ to $16$.
\par To balance between the computation speedup and communication overhead, we implement our algorithm with 128 cores allocated for primal variables' updating: \eqref{eq: x primal predictor update} \eqref{eq: u primal predictor update}, \eqref{eq: x primal corrector update} and \eqref{eq: u primal corrector update}, and 
$m_1$ (the number of quadratic constraints) cores for dual updating: \eqref{eq: dual predictor update} and \eqref{eq: dual corrector update}. The total number of compute nodes needed is calculated as $n_{\text{node}}$ = $\lceil \frac{n_{\text{core}}}{24} \rceil$ = $\lceil \frac{128 + m_1}{24} \rceil$. The stopping criteria we used are defined in \eqref{eq:res_1} and \eqref{eq:res_2}, with the tolerance $\tau^{\text{PC}^2\text{PM}}$ set to be $10^{-3}$. Table \ref{tab: Standard QCQP}
\begin{table}[htb]
\renewcommand{\arraystretch}{1.2}
\centering
\footnotesize
\setlength\tabcolsep{3.5pt}
\begin{tabular}{ccc|c|ccccc} \hline
\multirow{2}{*}{$\mathbf{n}_{\mathbf{1}}$}&\multirow{2}{*}{$\mathbf{m}_{\mathbf{1}}$}&\multirow{2}{*}{$\bm{\kappa}$}&\multirow{2}{*}{}&\multirow{2}{*}{\textbf{$\text{n}_{\text{node}}$}}&\multirow{2}{*}{\textbf{$\text{n}_{\text{core}}$}}&\textbf{mem./node}&\textbf{time}&\textbf{obj.} \\
&&&&&&(GB)&(hour)&\textbf{val.} \\ \hline
&\multirow{6}{*}{$\mathbf{1}$}&$10^2$&\multirow{2}{*}{\textbf{PC$^2$PM}}&\multirow{3}{*}{$6$}&\multirow{3}{*}{$128 + 1$}&\multirow{3}{*}{\textcolor{blue}{$1.6$/node}}&$3.94$&$-420.621$ \\
&&$10^4$&&&&&$4.96$&$-214.389$ \\
$1.6$&&$10^6$&$(\tau^{\text{PC}^2\text{PM}} = 10^{-3})$&&&&$6.95$&$-324.428$ \\ \cdashline{3-9}
$\times 10^4$&&$10^2$&\multirow{2}{*}{\textbf{CPLEX 12.8.0}}&\multirow{3}{*}{$1$}&\multirow{3}{*}{$24$}&\multirow{3}{*}{\textcolor{red}{$41.1$}}&$4.97$&$-420.645$ \\
&&$10^4$& &&&&$5.02$&$-214.423$ \\
&&$10^6$&$(\tau^{\text{Barrier}} = 10^{-3})$&&&&$5.25$&$-324.465$ \\ \hline
&\multirow{6}{*}{$\mathbf{2}$}&$10^2$&\multirow{2}{*}{\textbf{PC$^2$PM}}&\multirow{3}{*}{$6$}&\multirow{3}{*}{$128 + 2$}&\multirow{3}{*}{\textcolor{blue}{$1.9$/node}}&\textcolor{blue}{$2.28$}&$-322.232$ \\
&&$10^4$&&&&&\textcolor{blue}{$2.41$}&$-161.960$ \\
$1.6$&&$10^6$&$(\tau^{\text{PC}^2\text{PM}} = 10^{-3})$&&&&\textcolor{blue}{$1.87$}&$-244.048$ \\ \cdashline{3-9}
$\times 10^4$&&$10^2$&\multirow{2}{*}{\textbf{CPLEX 12.8.0}}&\multirow{3}{*}{$1$}&\multirow{3}{*}{$24$}&\multirow{3}{*}{\textcolor{red}{$74.0$}}&\textcolor{red}{$10.82$}&$-322.213$ \\
&&$10^4$& &&&&\textcolor{red}{$10.83$}&$-161.910$ \\
&&$10^6$&$(\tau^{\text{Barrier}} = 10^{-3})$&&&&\textcolor{red}{$10.35$}&$-244.035$ \\ \hline
&\multirow{6}{*}{$\mathbf{4}$}&$10^2$&\multirow{2}{*}{\textbf{PC$^2$PM}}&\multirow{3}{*}{$6$}&\multirow{3}{*}{$128 + 4$}&\multirow{3}{*}{\textcolor{blue}{$2.6$/node}}&\textcolor{blue}{$1.46$}&$-243.154$ \\
&&$10^4$&&&&&\textcolor{blue}{$1.23$}&$-126.184$ \\
$1.6$&&$10^6$&$(\tau^{\text{PC}^2\text{PM}} = 10^{-3})$&&&&\textcolor{blue}{$1.46$}&$-189.230$ \\  \cdashline{3-9}
$\times 10^4$&&$10^2$&\multirow{2}{*}{\textbf{CPLEX 12.8.0}}&\multirow{3}{*}{$1$}&\multirow{3}{*}{$24$}&\multirow{3}{*}{\textcolor{red}{O.O.M.}}&\multirow{3}{*}{\textcolor{red}{N.A.}}&\multirow{3}{*}{N.A.} \\
&&$10^4$& &&&&& \\
&&$10^6$&$(\tau^{\text{Barrier}} = 10^{-3})$&&&\textcolor{red}{($> 96$ GB)}&& \\ \hline
&\multirow{6}{*}{$\mathbf{8}$}&$10^2$&\multirow{2}{*}{\textbf{PC$^2$PM}}&\multirow{3}{*}{$6$}&\multirow{3}{*}{$128 + 8$}&\multirow{3}{*}{\textcolor{blue}{$4.3$/node}}&\textcolor{blue}{$2.13$}&$-189.945$ \\
&&$10^4$&&&&&\textcolor{blue}{$1.33$}&$-97.974$ \\
$1.6$&&$10^6$&$(\tau^{\text{PC}^2\text{PM}} = 10^{-3})$&&&&\textcolor{blue}{$1.54$}&$-144.916$ \\  \cdashline{3-9}
$\times 10^4$&&$10^2$&\multirow{2}{*}{\textbf{CPLEX 12.8.0}}&\multirow{3}{*}{$1$}&\multirow{3}{*}{$24$}&\multirow{3}{*}{\textcolor{red}{O.O.M.}}&\multirow{3}{*}{\textcolor{red}{N.A.}}&\multirow{3}{*}{N.A.} \\
&&$10^4$& &&&&& \\
&&$10^6$&$(\tau^{\text{Barrier}} = 10^{-3})$&&&\textcolor{red}{($> 96$ GB)}&& \\ \hline
&\multirow{6}{*}{$\mathbf{16}$}&$10^2$&\multirow{2}{*}{\textbf{PC$^2$PM}}&\multirow{3}{*}{$6$}&\multirow{3}{*}{$128 + 16$}&\multirow{3}{*}{\textcolor{blue}{$7.7$/node}}&\textcolor{blue}{$3.72$}&$-147.310$ \\
&&$10^4$&&&&&\textcolor{blue}{$2.34$}&$-74.854$ \\
$1.6$&&$10^6$&$(\tau^{\text{PC}^2\text{PM}} = 10^{-3})$&&&&\textcolor{blue}{$3.04$}&$-111.490$ \\  \cdashline{3-9}
$\times 10^4$&&$10^2$&\multirow{2}{*}{\textbf{CPLEX 12.8.0}}&\multirow{3}{*}{$1$}&\multirow{3}{*}{$24$}&\multirow{3}{*}{\textcolor{red}{O.O.M.}}&\multirow{3}{*}{\textcolor{red}{N.A.}}&\multirow{3}{*}{N.A.} \\
&&$10^4$& &&&&& \\
&&$10^6$&$(\tau^{\text{Barrier}} = 10^{-3})$&&&\textcolor{red}{($> 96$ GB)}&& \\ \hline
\end{tabular}
\caption{Comparison of PC$^2$PM with CPLEX 12.8.0 for solving standard-form, high-dimension convex QCQPs.}
\label{tab: Standard QCQP}
\end{table}
reports the elapsed wall-clock time used by the PC$^2$PM algorithm, along with the amount of memory used by each compute node and the final objective function value, with respect to the increasing condition number $\kappa$. The performance of CPLEX 12.8.0 with the same convergence tolerance is also presented in Table \ref{tab: Standard QCQP} for comparison. In the first two groups of tests with $m_1 = 1 \text{ and } 2$, our algorithm compares favorably to CPLEX and uses much less memory. For the rest groups of test cases, CPLEX fails to provide a solution (actually fails to complete even a single iteration) due to running out of memory; while PC$^2$PM still converges within a reasonable amount of time. As the scale of the problem increases, our algorithm exhibits favorable scalability, due to its distributed storage of data and the capability of massively parallel computing. Another interesting observation from Table \ref{tab: Standard QCQP}, though we do not know the underlying reason, is that when the number of quadratic constraints ($m_1$) is small, PC$^2$PM's run time appears to be sensitive to the condition number of matrices (i.e., the Hessian matrices of the objective function and the constraints); yet when $m_1$ becomes larger, the effect of condition numbers on the run time appears to be subdued.
\par We also plot the two residuals $res_1^k$ and $res_2^k$ in Fig~\ref{fig: Residuals},
\begin{figure}[htb]
\centering
\begin{subfigure}{.33\textwidth}
\centering
\includegraphics[width=1.0\linewidth]{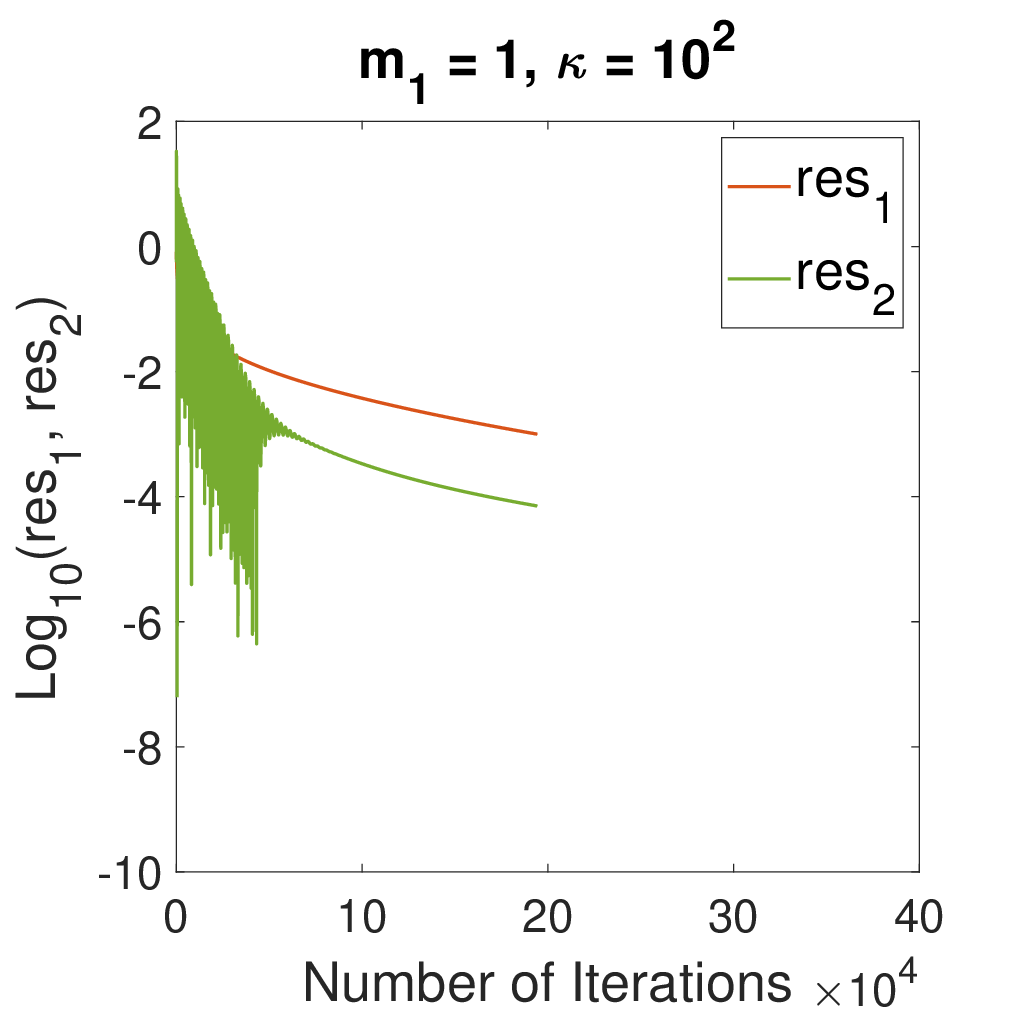}
\end{subfigure}%
\begin{subfigure}{.33\textwidth}
\centering
\includegraphics[width=1.0\linewidth]{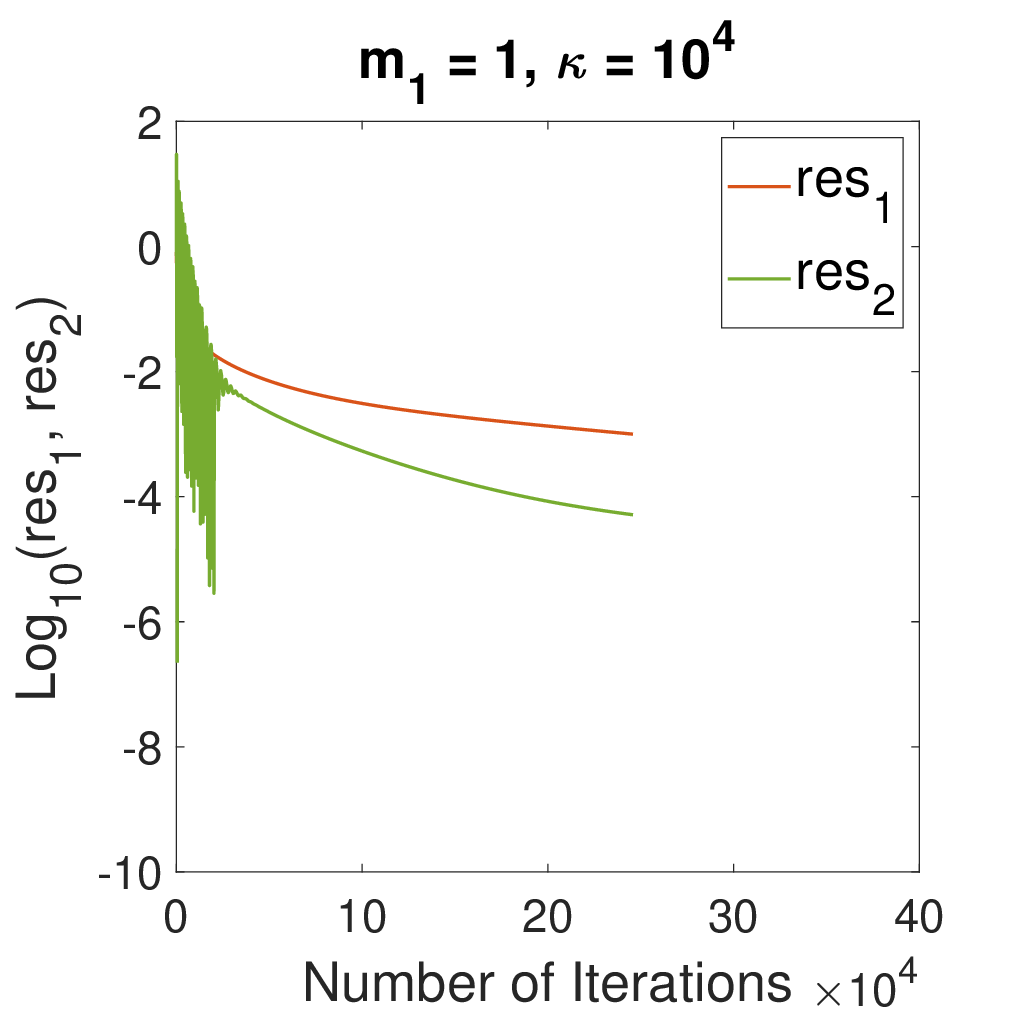}
\end{subfigure}%
\begin{subfigure}{.33\textwidth}
\centering
\includegraphics[width=1.0\linewidth]{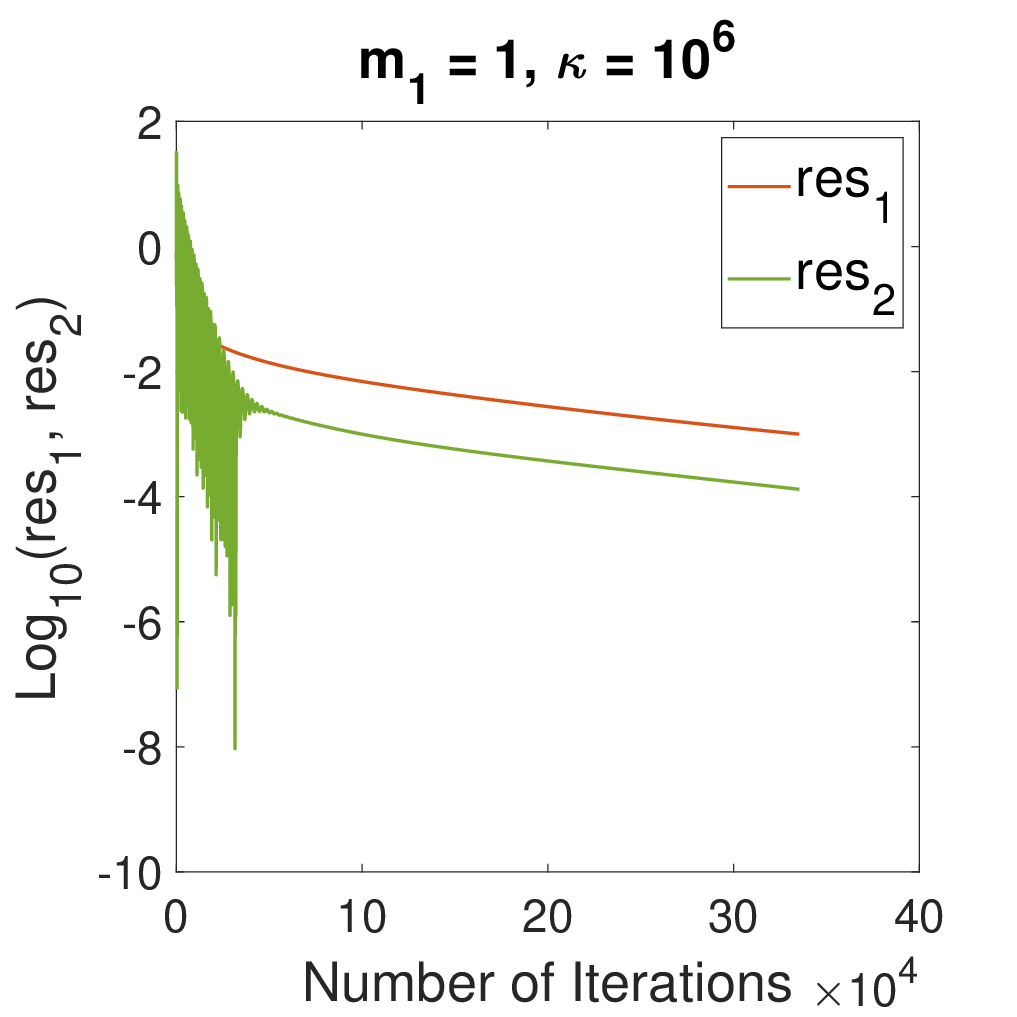}
\end{subfigure}
\begin{subfigure}{.33\textwidth}
\centering
\includegraphics[width=1.0\linewidth]{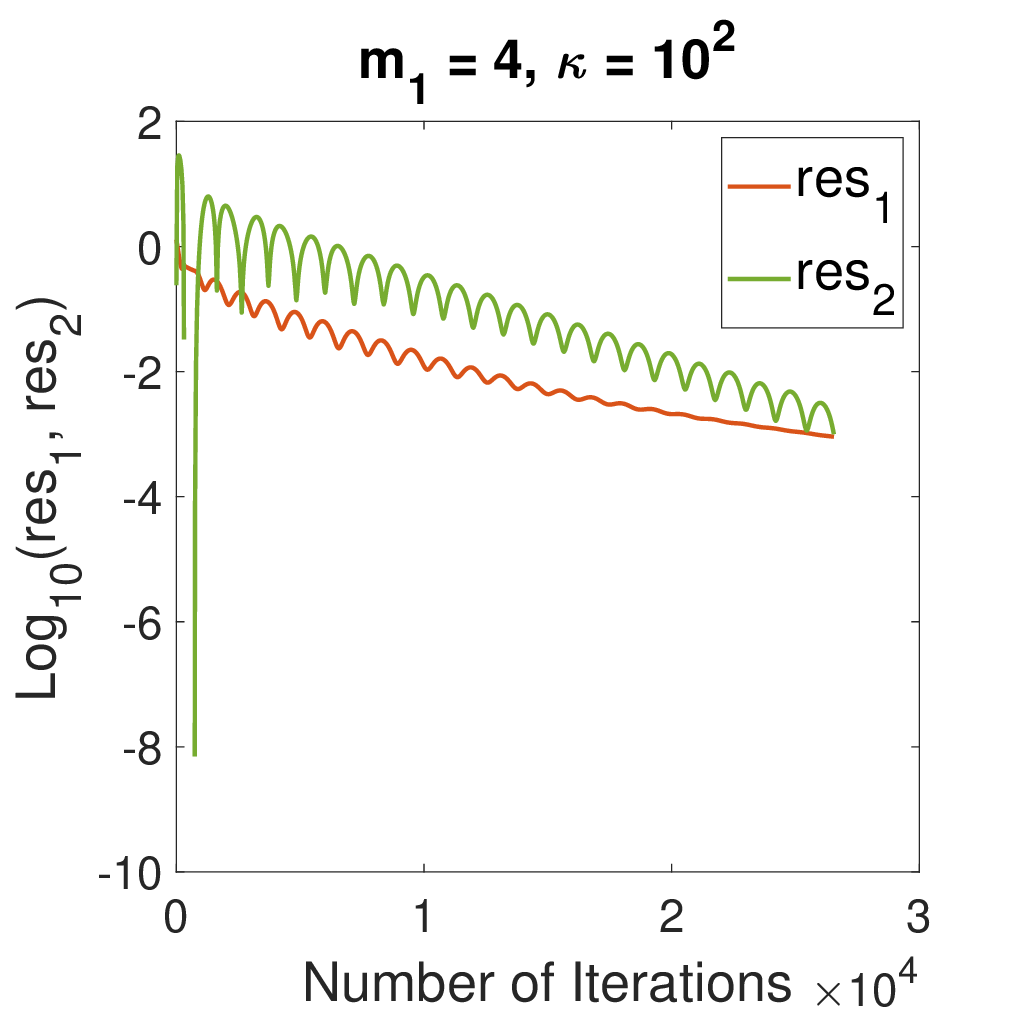}
\end{subfigure}%
\begin{subfigure}{.33\textwidth}
\centering
\includegraphics[width=1.0\linewidth]{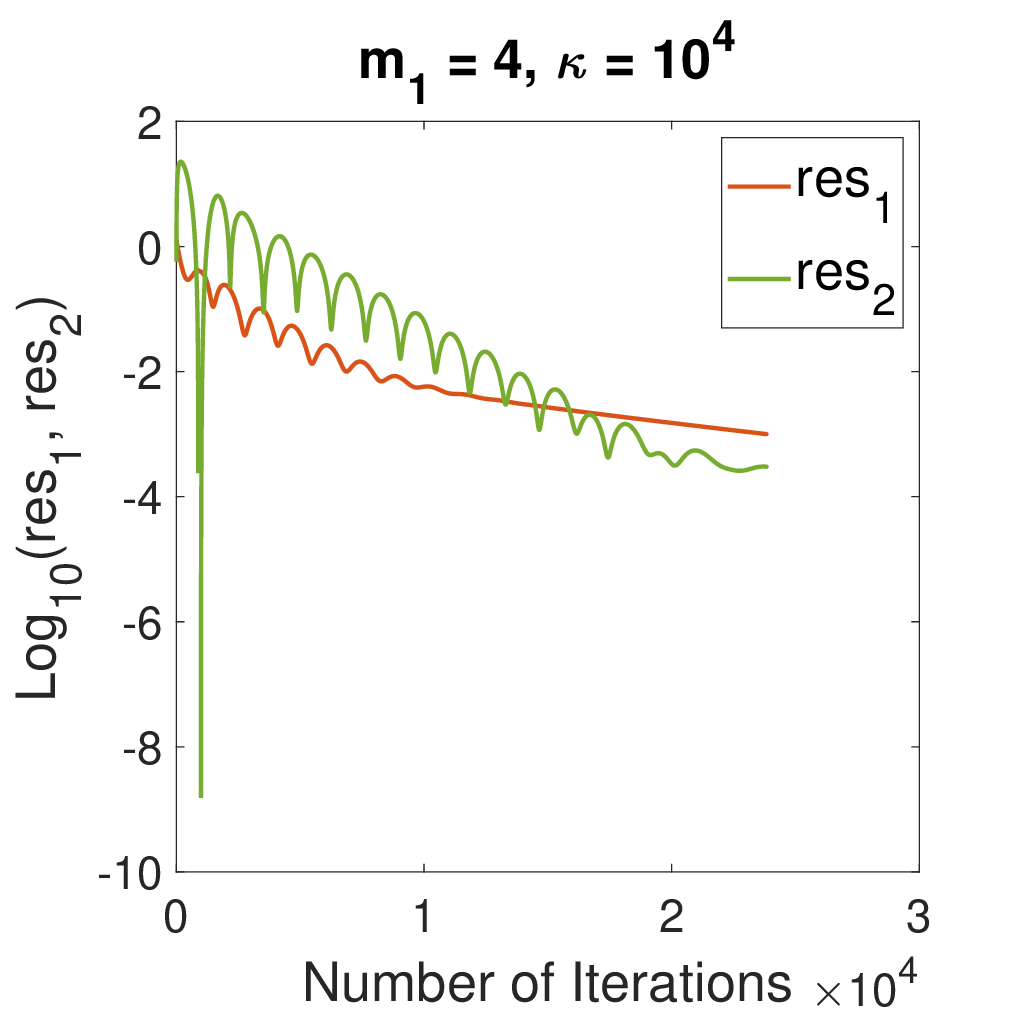}
\end{subfigure}%
\begin{subfigure}{.33\textwidth}
\centering
\includegraphics[width=1.0\linewidth]{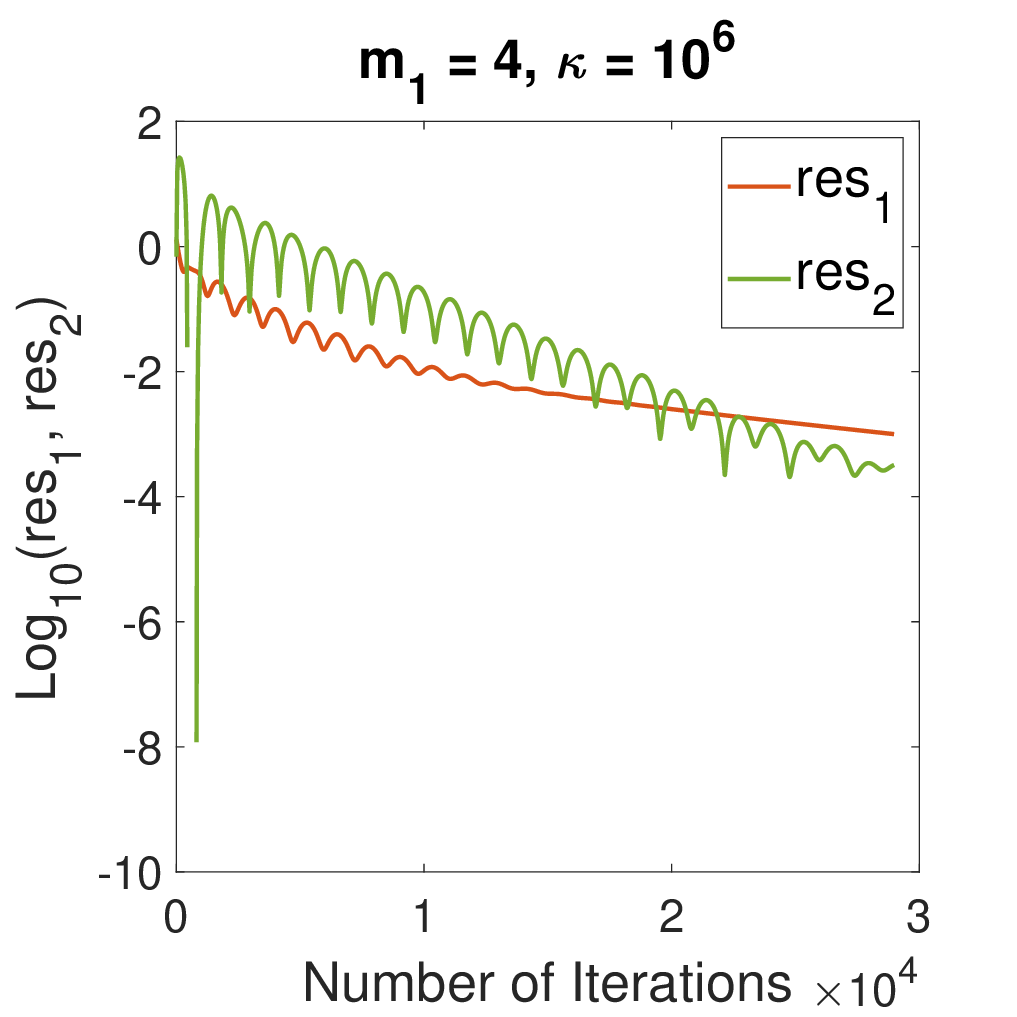}
\end{subfigure}
\begin{subfigure}{.33\textwidth}
\centering
\includegraphics[width=1.0\linewidth]{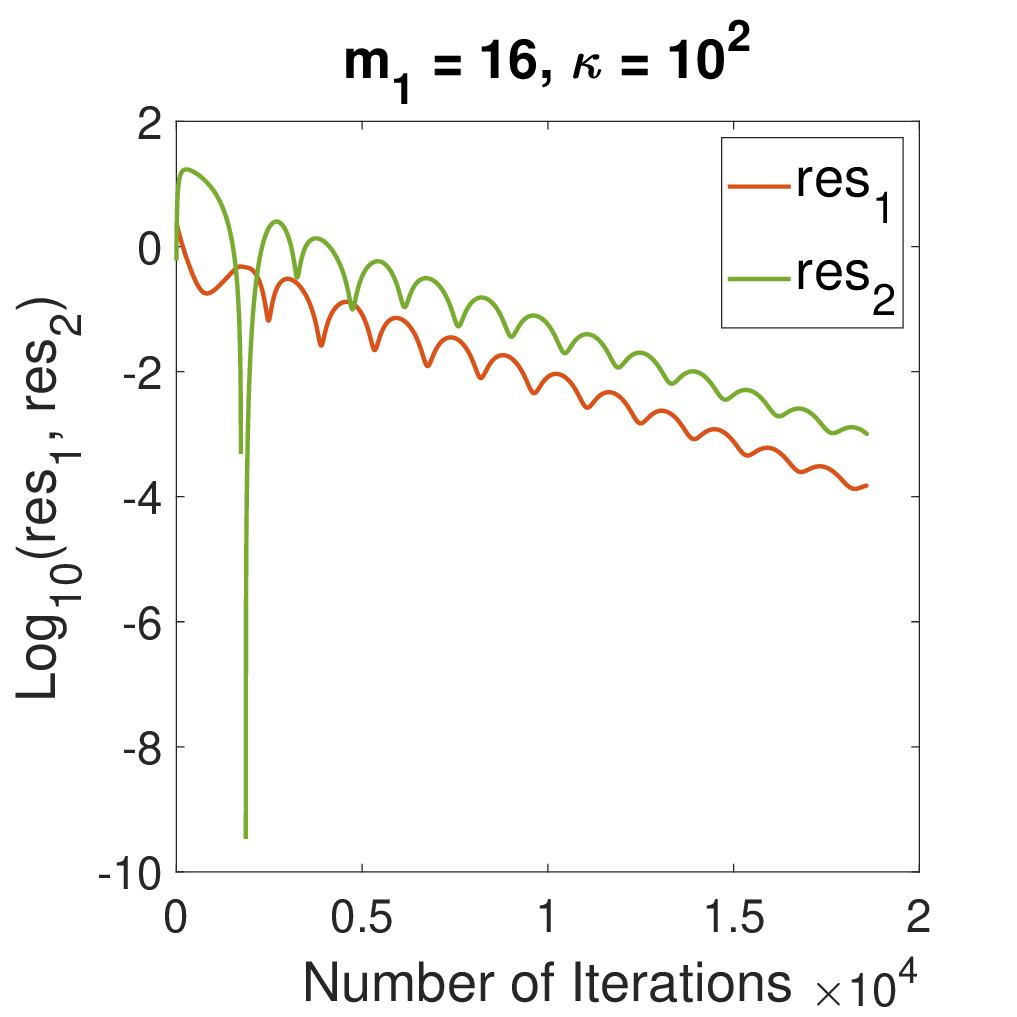}
\end{subfigure}%
\begin{subfigure}{.33\textwidth}
\centering
\includegraphics[width=1.0\linewidth]{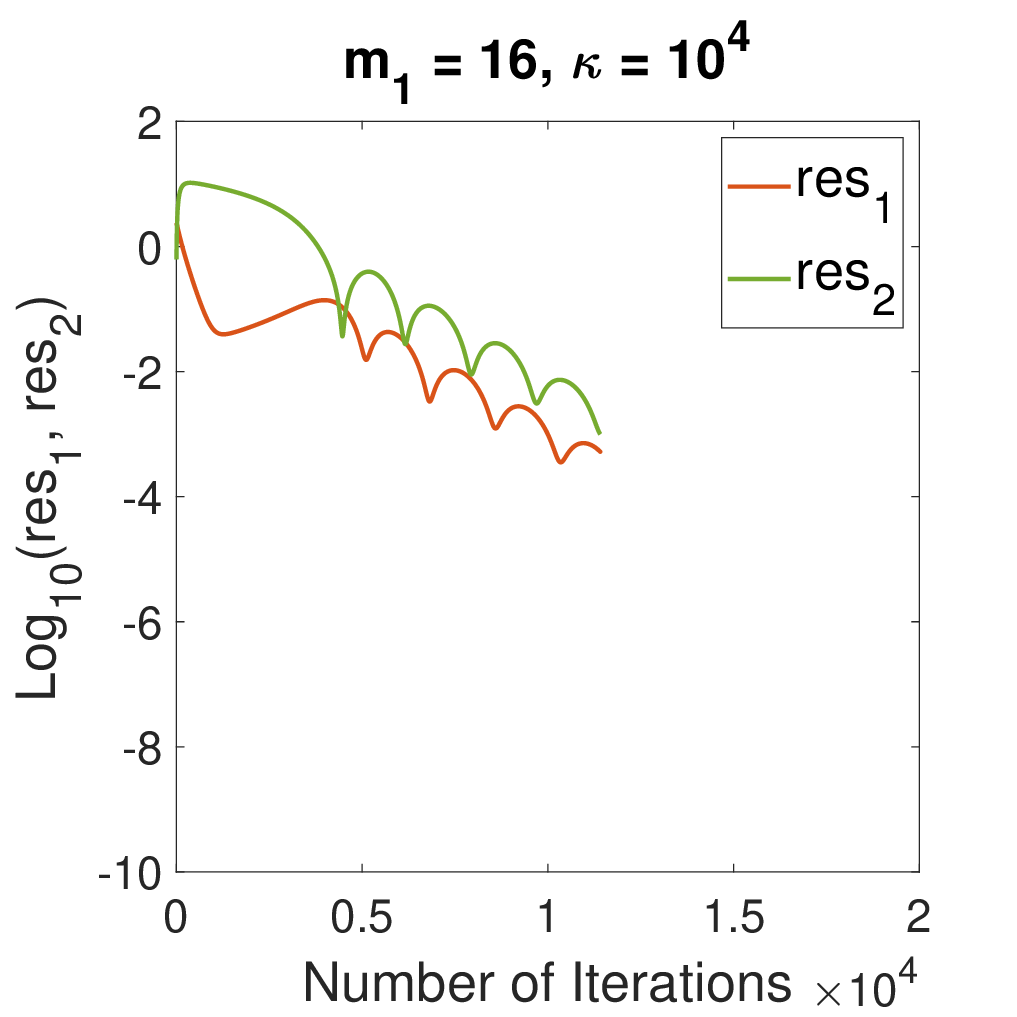}
\end{subfigure}%
\begin{subfigure}{.33\textwidth}
\centering
\includegraphics[width=1.0\linewidth]{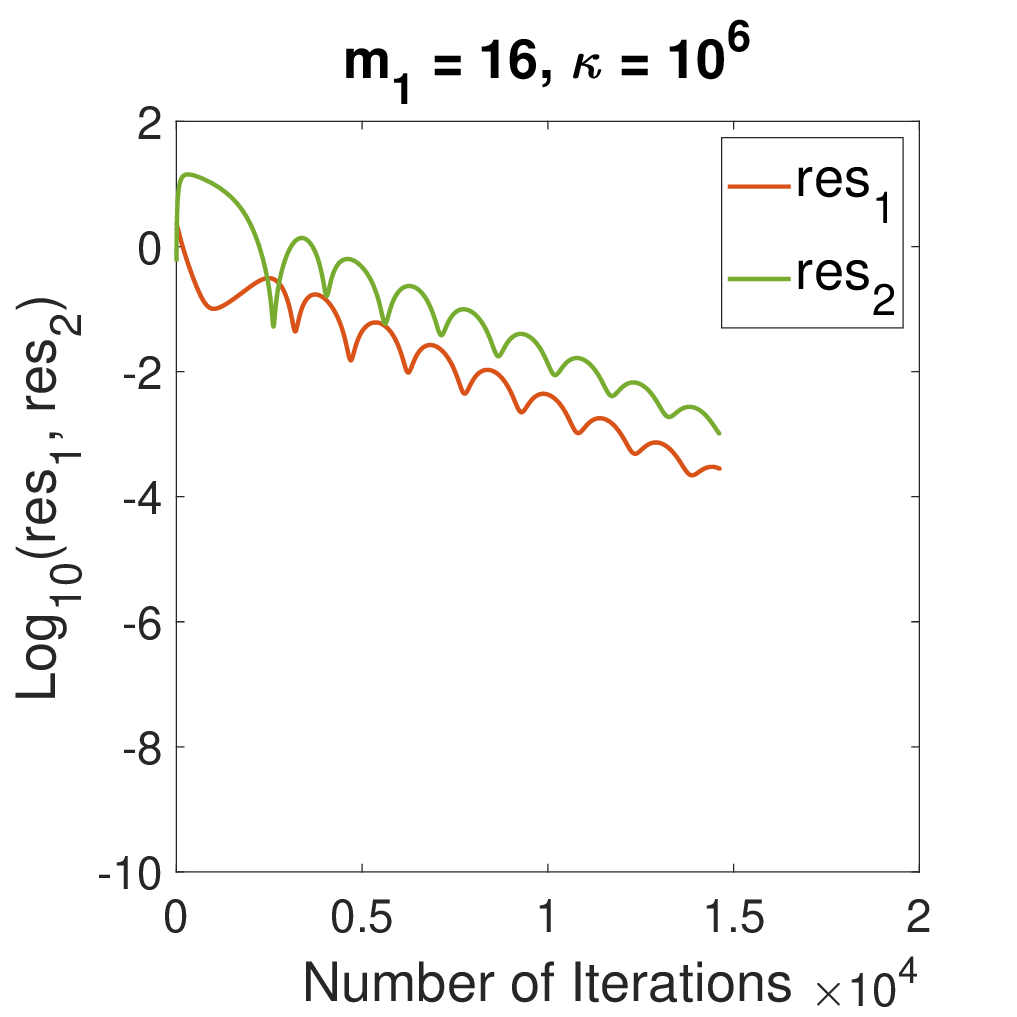}
\end{subfigure}
\caption{Convergence of residuals.}
\label{fig: Residuals}
\end{figure}
with $res_1^k$ corresponding to the gradient of the Lagrangian function, and $res_2^k$ corresponding to the feasibility and complementarity conditions.
The three plots in a same row are with the same number of constraints $m_1$, but with different condition numbers of the Hessian matrices. 
As seen in Fig~\ref{fig: Residuals}, from left to right, when $m_1$ is small, as the condition number $\kappa$ increases, more iterations are required for the PC$^2$PM algorithm to converge; yet when $m_1$ becomes larger, the number of iterations depends more on the absolute value of the objective function than the condition number. Another observation is that when the number of constraints increases (i.e., from top to bottom), the convergence of the residuals becomes more smooth.
\subsection{Multiple Kernel Learning in Support Vector Machine}
In this subsection, we briefly introduce how the Support Vector Machine (SVM) with multiple kernel learning can be formulated as a convex QCQP, and present numerical results of applying our algorithm to solve high-dimension instances. As discussed in \cite{hastie2009elements}, SVM is a discriminative classifier proposed for binary classification problems. Given a set of $n_{tr}$ pairs of independently and identically distributed training data points $\{(\mathbf{d}_j, l_j)\}_{j=1}^{n_{tr}}$, where $\mathbf{d}_j \in \mathbb{R}^{n_d}$ is the $n_d$-dimension input vector and $l_j \in \{-1, 1\}$ is its class label, SVM searches for a hyperplane that can best separate the points from two classes. The hyperplane is defined as $\{\mathbf{d} \in \mathbb{R}^{n_d} | f(\mathbf{d}) = \bm{\beta}^T \mathbf{d} + \beta_0 = 0\}$,  where $\bm{\beta} \in \mathbb{R}^{n_d}$ is a unit vector with $\lVert \bm{\beta} \rVert_2 = 1$, and $\beta_0 \in \mathbb{R}$ is a scalar. The points belonging to either class should be separated as far away from the hyperplane as possible, while still remain on the correct side. When the data points cannot be clearly separated in the original space $\mathbb{R}^{n_d}$, we instead search in a feature space $\mathbb{R}^{n_f}$, by mapping the input data space $\mathbb{R}^{n_d}$ to the feature space through a function $\Phi: \mathbb{R}^{n_d} \to \mathbb{R}^{n_f}$. For example, a 2-dimension data space can be lifted to a 3-dimension feature space. Using the function $\Phi$, we can define  a \textit{kernel function} $k: \mathbb{R}^{n_d} \times \mathbb{R}^{n_d} \to \mathbb{R}$ as $k(\mathbf{d}, \mathbf{d}^{\prime}) \coloneqq \ \langle\Phi(\mathbf{d}),\ \Phi(\mathbf{d}^{\prime})\rangle$ for any $\mathbf{d}, \mathbf{d}^{\prime} \in \mathbb{R}^{n_d}$, where $\langle,\rangle$ denotes an inner product. The resulting discriminant function $\mathcal{G}: \mathbb{R}^{n_d} \to \{-1, 1\}$, which the SVM searches for, can be expressed as:
\begin{equation}\label{eq: Induction_SKL}
\mathcal{G}(\mathbf{d}) = \text{sign}\Big(\sum_{j=1}^{n_{tr}} \alpha_j l_j k(\mathbf{d}_j, \mathbf{d}) + b\Big), \quad \forall \mathbf{d} \in \mathbb{R}^{n_d},
\end{equation}
where $\bm{\alpha} \equiv (\alpha_1, \ldots, \alpha_{n_{tr}})^T$ is the weight vector and $b$ is the bias. The popular choices of kernel functions in the SVM literature include the linear kernel function $k_{LIN}$, the polynomial kernel function $k_{POL}$ and the Gaussian kernel function $k_{GAU}$:
\begin{subequations}
\begin{align}
&k_{LIN}(\mathbf{d}, \mathbf{d}^{\prime}) \coloneqq \mathbf{d}^T \mathbf{d}^{\prime}, \quad \forall \mathbf{d}, \mathbf{d}^{\prime} \in \mathbb{R}^{n_d} \label{eq: Linear_kernel} \\ 
&k_{POL}(\mathbf{d}, \mathbf{d}^{\prime}) \coloneqq (1 + \mathbf{d}^T \mathbf{d}^{\prime})^2, \quad \forall \mathbf{d}, \mathbf{d}^{\prime} \in \mathbb{R}^{n_d} \label{eq: Polynomial_kernel} \\
&k_{GAU}(\mathbf{d}, \mathbf{d}^{\prime})) \coloneqq e^{-\frac{\lVert \mathbf{d} - \mathbf{d}^{\prime} \rVert_2^2}{2 \sigma^2}}, \quad \sigma > 0, \forall \mathbf{d}, \mathbf{d}^{\prime} \in \mathbb{R}^{n_d}. \label{eq: Gaussian_kernel}
\end{align}
\end{subequations}
\par Instead of using a single kernel function, \cite{lanckriet2004learning} explores SVM using a kernel function that can be expressed as a non-negative combination of a pre-specified set of kernel functions $\{k_1, \dots, k_m\}$, with the non-negative coefficients $\lambda_1, \dots, \lambda_m$ to be allocated; that is, $k(\mathbf{d}, \mathbf{d}^{\prime}) = \sum_{i=1}^m \lambda_i k_i(\mathbf{d}, \mathbf{d}^{\prime})$ for any $\mathbf{d}, \mathbf{d}^{\prime} \in \mathbb{R}^{n_d}$ with $\lambda_1, \ldots, \lambda_m\geq 0$. The allocation process can be expressed as solving a convex QCQP, where each $\lambda_i$ is the Lagrangian multiplier corresponding to each quadratic constraint. The formulation of the convex QCQP, as provided in \cite{lanckriet2004learning}, is as follows:
\begin{enumerate}[label=(\roman*)]
\item \textbf{1-norm Soft Margin SVM} learns the coefficients through solving the following convex QCQP:
\begin{equation}\label{eq: SM1 MKL SVM Dual}
\begin{aligned}
\underset{\bm{\alpha} \in \mathbb{R}^{n_{tr}}, \alpha_0 \in \mathbb{R}}{\text{minimize}} \quad &-\mathbf{e}^T \bm{\alpha} + R \alpha_0 \\
\text{subject to} \quad &\frac{1}{2} \bm{\alpha}^T \big[\frac{1}{R_i} G_i(K_{i, tr})\big] \bm{\alpha} - \alpha_0 \leq 0, \quad i = 1, \dots, m, \qquad (\lambda_i) \\
&\sum_{j=1}^{n_{tr}} l_j \alpha_j = 0, \qquad (\gamma) \\
&0 \leq \alpha_j \leq C, \quad j = 1, \dots, n_{tr},
\end{aligned}
\end{equation}
\item \textbf{2-norm Soft Margin SVM} learns the coefficients through solving the following convex QCQP:
\begin{equation}\label{eq: SM2 MKL SVM Dual}
\begin{aligned}
\underset{\bm{\alpha} \in \mathbb{R}_+^{n_{tr}}, \alpha_0 \in \mathbb{R}}{\text{minimize}} \quad &\frac{1}{2} \bm{\alpha}^T \big[\frac{1}{C} I_{n_{tr}}\big] \bm{\alpha} - \mathbf{e}^T \bm{\alpha} + R \alpha_0 \\
\text{subject to} \quad &\frac{1}{2} \bm{\alpha}^T \big[\frac{1}{R_i} G_i(K_{i, tr})\big] \bm{\alpha} - \alpha_0 \leq 0, \quad i = 1, \dots, m, \qquad (\lambda_i) \\
&\sum_{j=1}^{n_{tr}} l_j \alpha_j = 0, \qquad (\gamma)
\end{aligned}
\end{equation}
\end{enumerate}
where the vector $\mathbf{e}$ denotes an $n_{tr}$-dimensional vector of all ones. Given a labeled training data set $\mathcal{S}_{tr} = \{(\mathbf{d}_j, l_j)\}_{j=1}^{n_{tr}}$ and an unlabeled test data set $\mathcal{S}_t = \{\mathbf{d}_j\}_{j=1}^{n_t}$, a matrix $K_i \in \mathbb{R}^{(n_{tr} + n_t) \times (n_{tr} + n_t)}$ can be defined on the entire data set $\mathcal{S}_{tr} \cup \mathcal{S}_t$ as
\begin{equation}\label{eq: K_matrix}
K_i \coloneqq \left(
\begin{array}{cc}
K_{i, tr} & K_{i, (tr,t)} \\
K_{i, (tr,t)}^T & K_{i, t}
\end{array}
\right).
\end{equation}
The submatrix $K_{i, tr} \in \mathbb{R}^{n_{tr} \times n_{tr}}$ is a square symmetric matrix, whose $j j^{\prime}$-th element is directly defined by a kernel function: $[K_{i, tr}]_{j j^{\prime}} \coloneqq k_i(\mathbf{d}_j, \mathbf{d}_{j^{\prime}})$ for any $\mathbf{d}_j, \mathbf{d}_{j^{\prime}}$ in $\mathcal{S}_{tr}$. The submatrices $K_{i, (tr, t)} \in \mathbb{R}^{n_{tr} \times n_t}$ and $K_{i, t} \in \mathbb{R}^{n_t \times n_t}$ are defined in the same way but with different input vectors. The matrix $G_i(K_{i, tr}) \in \mathbb{R}^{n_{tr} \times n_{tr}}$ in the quadratic constraint of \eqref{eq: SM1 MKL SVM Dual} and \eqref{eq: SM2 MKL SVM Dual} is a square symmetric matrix with its $j j^{\prime}$-th element being $[G_i(K_{i,tr})]_{j j^{\prime}} = l_j l_{j^{\prime}} [K_{i,tr}]_{j j^{\prime}}$. Note that each kernel matrix $K_{i, tr}$ is a symmetric PSD matrix (see Proposition 2 in \cite{lanckriet2004learning}), then each $G_i(K_{i,tr})$ is also a symmetric PSD matrix, since $G_i(K_{i,tr}) = L K_{i,tr} L$, where $L \coloneqq diag(l_1, \dots, l_{n_{tr}})$. Let $R_i$ denote $\text{trace}(K_i)$ for $i = 1, \dots, m$, and $R = \sum_{i=1}^m \lambda_i R_i$ can be fixed as a given number. The parameter $C$ is a fixed positive scalar from the soft margin criteria.
\par Once the optimal primal-dual solution $(\bm{\alpha}^*; \lambda_1^*, \dots, \lambda_m^*)$ is found from either \eqref{eq: SM1 MKL SVM Dual} or \eqref{eq: SM2 MKL SVM Dual}, combining with those pre-specified $k_i$'s, it can be used to label the test data set according to the following discriminant function $\mathcal{G}_{\text{MKL}}: \mathbb{R}^{n_d} \to \{-1, 1\}$:
\begin{equation}\label{eq: Induction_MKL}
\mathcal{G}_{\text{MKL}}(\mathbf{d}_{j^{\prime}}) = \text{sign}\Big(\sum_{j=1}^{n_{tr}} \alpha_j^* l_j \big[\sum_{i=1}^m \lambda_i^* k_i(\mathbf{d}_j, \mathbf{d}_{j^{\prime}})\big] + b\Big), \quad \forall \mathbf{d}_{j^{\prime}} \in \mathcal{S}_t.
\end{equation}
Compared with \eqref{eq: Induction_SKL}, the only difference is the replacement of a non-negative combination of $k_i$'s with coefficients $\lambda_1^*, \dots, \lambda_m^*$. The test set accuracy (TSA) can then be obtained by measuring the percentage of the test data points accurately labeled according to the function \eqref{eq: Induction_MKL}.
%
\par The formulation  \eqref{eq: SM1 MKL SVM Dual} and \eqref{eq: SM2 MKL SVM Dual} provide instances of convex QCQPs in the form of \eqref{eq: QCQP Problem Form}, and we apply the PC$^2$PM to solve them.
The first input data set we used is the \textit{Two-norm Problem} from \cite{breiman1998arcing}, which is also used in \cite{lanckriet2004learning}; however, our data set has a much larger size than in \cite{lanckriet2004learning}. We first generate $8,000$ data points, with each data point being a $20$-dimension vector, drawn from a multivariate normal distribution with a unit covariance matrix and the mean of $(a, \dots, a)$. These data points form the first class that are all labeled with $1$. Another $8,000$ points of $20$-dimension vectors are drawn from another multivariate normal distribution with also a unit covariance matrix but the mean of $(-a, \dots, -a)$. They form the second class that are all labeled with $-1$. The value of $a$ is set as $\frac{2}{\sqrt{20}}$, the same as in \cite{breiman1998arcing}. Together, these two classes of data points form our first input data set with the size of $8000 + 8000 = 16, 000$. The second input data set is the \textit{HEPMASS Data Set} from the UCI Repository\footnote{\href{https://archive.ics.uci.edu/ml/datasets/HEPMASS}{https://archive.ics.uci.edu/ml/datasets/HEPMASS}}. This data set is used in high-energy physics experiments for learning particle-producing collisions from a background source. Each data point is generated from Monte Carlo simulations of collisions, and has $28$ attributes. We randomly selected $16,000$ data points from the original $10,500,000$-sized data set as our inputs.
\par We use a set of pre-specified kernel functions $\{k_1, \dots, k_5\}$ that contains all Gaussian kernel functions defined in \eqref{eq: Gaussian_kernel} whose $\sigma^2$ equal to $0.01$, $0.1$, $1$, $10$ and $100$ respectively. Each matrix $K_i$ is normalized and $R_i = \text{trace}(K_i)$ is set to be $1.0$ for $i = 1, \dots, 5$. Then $R = \sum_{i=1}^5 \lambda_i R_i = \sum_{i=1}^5 \lambda_i$, is restricted to be $5.0$. The value of the parameter $C$ is fixed as $1.0$ for 2-norm soft margin SVMs, and is set as $3.0$ for Two-norm Problem and $5.0$ for HEPMASS Data Set when using 1-norm soft margin SVMs. Numerical results of both $1$-norm and $2$-norm soft margin SVMs using the above five kernel functions are summarized in Table~\ref{tab: MKL 1}.
\begin{table}[htb]
\renewcommand{\arraystretch}{1.2}
\centering
\footnotesize
\setlength\tabcolsep{2.3pt}
\begin{subtable}[c]{1.0\textwidth}
\begin{tabular}{c|c|cccccccc} \hline
\multicolumn{10}{c}{\textbf{Two-norm Problem}} \\ \hline
\textbf{SVM}&&\textbf{mem./node}&\textbf{time}&\multirow{2}{*}{$\lambda_1^*$}&\multirow{2}{*}{$\lambda_2^*$}&\multirow{2}{*}{$\lambda_3^*$}&\multirow{2}{*}{$\lambda_4^*$}&\multirow{2}{*}{$\lambda_5^*$}&\textbf{TSA} \\
\textbf{Criteria}&&(GB)&(hour)&&&&&&($\%$) \\ \hline
SM1&\textbf{PC$^2$PM}&\textcolor{blue}{$2.1$/node}&$6.06$&$0.000$&$0.000$&$0.000$&$6.543$&$0.000$&$97.84$ \\ \cdashline{2-10}
$C = 3.0$&\textbf{CPLEX 12.8.0}&\textcolor{red}{O.O.M. ($> 96$)}&N.A.&N.A.&N.A.&N.A.&N.A.&N.A.&N.A. \\ \hline
SM2&\textbf{PC$^2$PM}&\textcolor{blue}{$2.0$/node}&\textcolor{blue}{$0.72$}&$0.000$&$0.000$&$0.000$&$5.005$&$0.000$&$97.83$ \\ \cdashline{2-10}
$C = 1.0$&\textbf{CPLEX 12.8.0}&\textcolor{red}{$73.6$}&\textcolor{red}{$3.09$}&$0.001$&$0.000$&$0.000$&$4.997$&$0.002$&$97.83$ \\ \hline
\end{tabular}
\caption{For 1-norm soft margin SVM, we let PC$^2$PM converge with $res_1 < 0.015$ instead of $10^{-3}$, while still keep $res_2 < 10^{-3}$.}
\vspace{10pt}
\end{subtable}
\quad%
\begin{subtable}[c]{1.0\textwidth}
\begin{tabular}{c|c|cccccccc} \hline
\multicolumn{10}{c}{\textbf{HEPMASS Data Set}} \\ \hline
\textbf{SVM}&&\textbf{mem./node}&\textbf{time}&\multirow{2}{*}{$\lambda_1^*$}&\multirow{2}{*}{$\lambda_2^*$}&\multirow{2}{*}{$\lambda_3^*$}&\multirow{2}{*}{$\lambda_4^*$}&\multirow{2}{*}{$\lambda_5^*$}&\textbf{TSA} \\
\textbf{Criteria}&&(GB)&(hour)&&&&&&($\%$) \\ \hline
SM1&\textbf{PC$^2$PM}&\textcolor{blue}{$2.1$/node}&$6.51$&$0.000$&$0.000$&$0.000$&$0.000$&$6.992$&$76.77$ \\ \cdashline{2-10}
$C = 5.0$&\textbf{CPLEX 12.8.0}&\textcolor{red}{O.O.M. ($> 96$)}&N.A.&N.A.&N.A.&N.A.&N.A.&N.A.&N.A. \\ \hline
SM2&\textbf{PC$^2$PM}&\textcolor{blue}{$2.0$/node}&\textcolor{blue}{$0.17$}&$0.000$&$0.000$&$0.000$&$0.000$&$5.088$&$78.43$ \\ \cdashline{2-10}
$C = 1.0$&\textbf{CPLEX 12.8.0}&\textcolor{red}{$71.5$}&\textcolor{red}{$3.12$}&$0.001$&$0.000$&$0.000$&$0.013$&$4.985$&$78.33$ \\ \hline
\end{tabular}
\caption{For 1-norm soft margin SVM, we let PC$^2$PM converge with $res_1 < 0.02$ instead of $10^{-3}$, while still keep $res_2 < 10^{-3}$.}
\vspace{4pt}
\end{subtable}
\caption{Comparison of PC$^2$PM with CPLEX 12.8.0 for solving multiple kernel learning problems using $5$ Gaussian kernel functions.}
\label{tab: MKL 1}
\end{table}
Each data set of a total number of $16,000$ data points is randomly partitioned into $80\%$ for training and $20\%$ for testing. The reported values in each row of Table~\ref{tab: MKL 1} are averaged over five different random partitions.
\par We implement PC$^2$PM using $128$ cores for primal updates and $5$ cores for dual updates, which amount to a total of $6$ compute nodes on Purdue's Brown cluster. The average elapsed wall-clock time used by PC$^2$PM to converge with a tolerance $\tau^{\text{PC}^2\text{PM}} = 10^{-3}$ is presented in Table \ref{tab: MKL 1}, along with the averaged amount of memory used by each node. We also report in Table \ref{tab: MKL 1} the average learned non-negative coefficients $\lambda_1^*, \dots \lambda_5^*$, as well as the average TSA. The performance of CPLEX 12.8.0 with the same tolerance is also presented in Table \ref{tab: MKL 1} for comparison. As shown by the values of the coefficients learned, the Gaussian kernel function $k_4$ with $\sigma^2 = 10.0$ is selected by the models of both two soft margin SVMs for the Two-norm Problem; the HEPMASS Data Set selects the Gaussian kernel function $k_5$ with $\sigma^2 = 100.0$. For 2-norm soft margin SVMs, PC$^2$PM converges much faster than CPLEX, and also uses much less memory (as expected). For TSA, both PC$^2$PM and CPLEX obtain the same value, calculated using their own optimal solution point $(\bm{\alpha}^*, \lambda_1^*, \dots, \lambda_m^*)$. For 1-norm soft margin SVMs, CPLEX fails to provide a solution due to running out of memory, while PC$^2$PM  still solves the problem. 
\par In Table~\ref{tab: MKL 1 Large},
\begin{table}[htb]
\renewcommand{\arraystretch}{1.2}
\centering
\footnotesize
\setlength\tabcolsep{2.5pt}
\begin{tabular}{cc|cccccc} \hline
\multicolumn{8}{c}{\textbf{HEPMASS Data Set}} \\ \hline
\multicolumn{8}{c}{\textbf{PC$^2$PM} $\quad$ SM2 $\quad$ $C = 1.0$} \\ \hline
\textbf{$\bm{\sigma}^2$ search}&\multirow{2}{*}{\textbf{m}}&\multirow{2}{*}{\textbf{$\text{n}_{\text{node}}$}}&\multirow{2}{*}{\textbf{$\text{n}_{\text{core}}$}}&\textbf{mem./node}&\textbf{time}&\multirow{2}{*}{\textbf{non-zero $\bm{\lambda}$'s}}&TSA \\
\textbf{range}&&&&(GB)&(hour)&&($\%$) \\ \hline
\multirow{2}{*}{$[10^{-4}, 10^4]$}&$9$&$6$&$128+9$&$2.9$/node&$0.43$&$\lambda_7^* = 8.995 \quad (\sigma^2 = 10^2)$&$79.62$ \\
&$17$&$7$&$128+17$&$4.4$/node&$1.51$&$\lambda_{12}^* = 17.001 \quad (\sigma^2 = 10^{1.5})$&$80.60$ \\ \hline
\end{tabular}
\caption{Numerical results of applying PC$^2$PM to solve 2-norm soft margin SVMs using multiple (9, 17) Gaussian kernel functions for the HEPMASS Data Set.}
\label{tab: MKL 1 Large}
\end{table}
we also report the numerical results of applying PC$^2$PM to solve $2$-norm soft margin SVMs for the HEPMASS Data Set, using $9$ Gaussian kernel functions with $\sigma^2$ equal to $10^{-4}$, $10^{-3}$, $10^{-2}$, $10^{-1}$, $10^0$, $10^1$, $10^2$, $10^3$ and $10^4$ respectively. Though the number of constraints doubles, PC$^2$PM still converges within a reasonable amount of time, and remains memory efficient. The Gaussian kernel function $k_7$ with $\sigma^2 = 100.0$ is still selected by the model. We further search the range of $[10^{-4}, 10^4]$ using $17$ Gaussian kernel functions with $\sigma^2$ equal to $10^{-4}$, $10^{-3.5}$, $10^{-3}$, $\dots$, $10^3$, $10^{3.5}$, $10^4$. The Gaussian kernel function $k_{12}$ with $\sigma^2 = 10^{1.5}$ is selected instead, and we observe a slightly increased average TSA.
\par While the numerical experiments so far have demonstrated the scalability of the PC$^2$PM algorithm due to its distributed data storage and natural decomposition to facilitate parallel computing, in the following experiments, we show the benefits of the PC$^2$PM algorithm for not requiring any matrix decompositions. In this test, we use three kernel functions, instead of five, to solve \eqref{eq: SM1 MKL SVM Dual} and \eqref{eq: SM2 MKL SVM Dual}. The three kernel functions consist of $k_1$ -- the Gaussian kernel function with $\sigma^2 = 100.0$, $k_2$ -- a linear kernel function defined in \eqref{eq: Linear_kernel}, and $k_3$ -- a polynomial kernel function defined in \eqref{eq: Polynomial_kernel}. The value of the parameter $C$ is fixed as $1.0$, and is only changed to $2.0$ when using 1-norm soft margin SVM for HEPMASS Data Set. All the other settings remain the same as in the previous experiment (except for the value of $R$, which is set as $3.0$). The numerical results are reported in Table~\ref{tab: MKL 2}.
\begin{table}[htb]
\renewcommand{\arraystretch}{1.2}
\centering
\footnotesize
\setlength\tabcolsep{5.2pt}
\begin{subtable}[c]{1.0\textwidth}
\begin{tabular}{c|c|cccccc} \hline
\multicolumn{8}{c}{\textbf{Two-norm Problem}} \\ \hline
\textbf{SVM}&&\textbf{mem./node}&\textbf{time}&\multirow{2}{*}{$\lambda_1^*$}&\multirow{2}{*}{$\lambda_2^*$}&\multirow{2}{*}{$\lambda_3^*$}&\textbf{TSA} \\
\textbf{Criteria}&&(GB)&(hour)&&&&($\%$) \\ \hline
SM1&\textbf{PC$^2$PM}&\textcolor{blue}{$1.7$/node}&$3.16$&$0.000$&$3.029$&$0.000$&$91.29$ \\ \cdashline{2-8}
$C = 1.0$&\textbf{CPLEX 12.8.0}&\textcolor{red}{Non-Con. Error}&N.A.&N.A.&N.A.&N.A.&N.A. \\ \hline
SM2&\textbf{PC$^2$PM}&\textcolor{blue}{$1.5$/node}&$1.69$&$0.000$&$3.054$&$0.000$&$97.85$ \\ \cdashline{2-8}
$C = 1.0$&\textbf{CPLEX 12.8.0}&\textcolor{red}{Non-Con. Error}&N.A.&N.A.&N.A.&N.A.&N.A. \\ \hline
\end{tabular}
\vspace{10pt}
\end{subtable}
\quad%
\begin{subtable}[c]{1.0\textwidth}
\begin{tabular}{c|c|cccccc} \hline
\multicolumn{8}{c}{\textbf{HEPMASS Data Set}} \\ \hline
\textbf{SVM}&&\textbf{mem./node}&\textbf{time}&\multirow{2}{*}{$\lambda_1^*$}&\multirow{2}{*}{$\lambda_2^*$}&\multirow{2}{*}{$\lambda_3^*$}&\textbf{TSA} \\
\textbf{Criteria}&&(GB)&(hour)&&&&($\%$) \\ \hline
SM1&\textbf{PC$^2$PM}&\textcolor{blue}{$1.6$/node}&$6.43$&$0.000$&$3.021$&$0.000$&$72.81$ \\ \cdashline{2-8}
$C = 2.0$&\textbf{CPLEX 12.8.0}&\textcolor{red}{Non-Con. Error}&N.A.&N.A.&N.A.&N.A.&N.A. \\ \hline
SM2&\textbf{PC$^2$PM}&\textcolor{blue}{$1.4$/node}&$0.83$&$0.000$&$3.019$&$0.000$&$80.51$ \\ \cdashline{2-8}
$C = 1.0$&\textbf{CPLEX 12.8.0}&\textcolor{red}{Non-Con. Error}&N.A.&N.A.&N.A.&N.A.&N.A. \\ \hline
\end{tabular}
\vspace{4pt}
\end{subtable}
\caption{Comparison of PC$^2$PM with CPLEX 12.8.0 for solving multiple kernel learning problems using $3$ kernel functions.}
\label{tab: MKL 2}
\end{table}
For all groups of tests, CPLEX returns an error stating that the quadratic constraint containing $G_3(K_{3, tr})$ is not convex, which is theoretically impossible because each matrix $G_i(K_{i,tr})$ is at least a PSD matrix as we discussed previously; while PC$^2$PM solves all the instances without any issues. The error returned by CPLEX is created likely by the failure of matrix decomposition of a large-scale PSD matrix due to precision limit. Once we reduce the size of the matrices in \eqref{eq: SM1 MKL SVM Dual} and \eqref{eq: SM2 MKL SVM Dual}, CPLEX can then solve the instances without error messages. This numerical experiment illustrates that not requiring matrix decomposition in the PC$^2$PM is not just of computational convenience; it can indeed make the algorithm more robust to solve large-scale problems without facing potential issues caused by floating point arithmetic.
%
%
%
%
\section{Conclusion and Future Works}
In this paper, we propose a novel distributed algorithm, built upon the original idea of the PCPM algorithm, that can solve non-separable convex QCQPs in a Jacobi-fashion (that is, parallel updating). Numerical results show that our algorithm, termed as PC$^2$PM, exhibits much better scalability when compared to CPLEX, which uses the IPM to solve convex QCQPs. The scalability of the algorithm is attributed to the three key features of the algorithm design: first, 
the PC$^2$PM algorithm can decompose primal (and dual) variables down to the scalar level and update them in parallel, even when the quadratic constraints are non-separable. Second, when implementing the algorithm, only the related columns of all the Hessian matrices need to be stored locally, instead of the entire matrices on each of computing unit in a parallel computing setting. Third, our algorithm does not need any matrix decomposition (unlike any semi-definite-programming-based approach), which can improve the algorithm's robustness, especially when solving convex QCQPs with PSD matrices, as demonstrated in our numerical experiments summarized in Table \ref{tab: MKL 2}. The second and the third feature together make our algorithm particularly suitable to solve extreme-dimension QCQPs, which likely will cause memory issues for other algorithms.  

In addition to the scalability of the PC$^2$PM algorithm, its ability to solve non-separable, quadratically constrained problems in  Jacobi-fashion should also be emphasized, 
as in general it is very difficult to design distributed algorithms with Jacobi-style update (as opposed to the sequential Gauss-Seidel update) to solve optimization problems with non-separable constraints. Whether the algorithm idea from 
PC$^2$PM can be extended to solve more general convex problems is certainly worth exploring. There are several other lines of research that can be done to improve the current work. First, while we proved convergence of PC$^2$PM, we cannot prove its convergence rate as of now. Second, while the parallel updating of the primal variables is a nice property of PC$^2$PM, it is still a synchronous algorithm in the sense that the algorithm needs to wait for all primal and dual updates to be done before it can move to the next iteration. An asynchronous implementation of the algorithm will no doubt make it even more suitable for distributed computing, and we defer it to our future work. Third, there have been increasing works on solving large-scale non-convex QCQPs. As mentioned in the introduction section, one algorithm idea is to solve it with a sequence of convexified QCQPs, where our algorithm is then applicable. This naturally leads to an algorithm with nested loops, where the outer loop lays out sequential convexification, and the inner loop invokes our algorithm. It would be interesting to see how such a nested algorithm performs in practice, especially with high-dimension problems.


\begin{acknowledgements}
The authors would like to acknowledge the support of National Science Foundation grant CMMI-1832688 and the Emerging Frontiers grant from the School of Industrial Engineering at Purdue University. Specially, we wish to thank Professor Jong-Shi Pang of University of Southern California for the helpful comments and discussions. In addition, we would like to thank Purdue Rosen Center for Advanced Computing for providing the computing resources and technical support.
\end{acknowledgements}

%
%
%
%
%
%
\bibliographystyle{spmpsci}      
\bibliography{main}   
%
%
%
%
%
%
%
\newpage
\begin{appendices}
\section{Step-size Update Rule for $\rho^{k+1}$}\label{app:StepRule}
With a given scalar $0 \leq \epsilon_0 < 1$, and a series of positive scalars $\epsilon_1, \ldots, \epsilon_8 > 0$ that satisfy $\sum_{s=1}^8 \epsilon_s \leq 1 - \epsilon_0$, we define the following function $\rho: \mathbb{X} \times \mathbb{R}^{n_2} \times \mathbb{R}_+^{m_1} \times \mathbb{R}^{m_2} \to (0, +\infty)$  to update the adaptive step size $\rho^{k+1}$ in Algorithm~\ref{alg: PC^2PM} at each iteration $k$:
\begin{equation}\label{eq: adaptive step size}
\begin{array} {lcl}
\rho^{k+1} & = &\rho(\mathbf{x}^k, \mathbf{u}^k, \bm{\lambda}^k, \bm{\gamma}^k)  \\[10pt]
& := & \min\hspace*{-3pt}\left\{\rho_1, \rho_2(\mathbf{x}^k, \mathbf{u}^k, \bm{\lambda}^k), \rho_3(\mathbf{x}^k, \bm{\lambda}^k, \bm{\gamma}^k), \rho_4, \rho_5(\mathbf{x}^k), \rho_6, \rho_7, \rho_8\right\}\hspace*{-3pt},
\end{array}
\end{equation}
where
\begin{enumerate}[label=(\roman*)]
\item
$\rho_1 = \left\{
\begin{array}{ll}
\displaystyle\frac{\epsilon_1}{\lVert P_0 \rVert_F}, &\text{if }\lVert P_0 \rVert_F \not= 0 \\
\epsilon_1, &\text{if }\lVert P_0 \rVert_F = 0, 
\end{array}
\right.$
with $\Vert \cdot \rVert_F$ representing the Frobenius norm of a matrix;
\vspace{8pt}
\item $\rho_2(\mathbf{x}^k, \mathbf{u}^k, \bm{\lambda}^k) =  \min_i\{\rho_{2i}(\mathbf{x}^k, \mathbf{u}^k, \bm{\lambda}^k)\}$, where 
$$\rho_{2i}(\mathbf{x}^k, \mathbf{u}^k, \bm{\lambda}^k) \coloneqq \left\{
\begin{array}{ll}
\displaystyle\frac{-b_i + \sqrt{b_i^2 + 4 a_i c_i}}{2 a_i}, &\text{if }a_i > 0 \\
\displaystyle\frac{c_i}{b_i}, &\text{if }a_i = 0, b_i > 0 \\[8pt]
M, &\text{if }a_i = 0, b_i = 0,
\end{array}
\right.$$
for all $i = 1, \dots, m_1$, with $a_i = \lvert \frac{1}{2} (\mathbf{x}^k)^T P_i \mathbf{x}^k + \mathbf{q}_i^T \mathbf{x}^k + \mathbf{c}_i^T \mathbf{u}^k + r_i \rvert \geq 0$, and $b_i = \lambda_i^k \geq 0$. For $c_i$, if $\lVert P_i \rVert_F \not= 0$, $c_i = \frac{\epsilon_2}{m_1 \lVert P_i \rVert_F} > 0$; otherwise $c_i = \frac{\epsilon_2}{m_1} > 0$. The constant $M > 0$ can be any fixed, arbitrarily large scalar;
\vspace{8pt}
\item 
$\rho_{3}(\mathbf{x}^k, \bm{\lambda}^k, \bm{\gamma}^k) = $
$$\hspace*{45pt} \left\{
\begin{array}{ll}
\displaystyle\min\{2 \epsilon_3, \frac{-b + \sqrt{b^2 + 4 a c}}{2 a}\}, &\text{if }a > 0 \\ 
\displaystyle\min\{2 \epsilon_3, \frac{c}{b}\}, &\text{if }a = 0, b > 0 \\[6pt]
2 \epsilon_3, &\text{if }a = 0, b = 0,
\end{array}
\right.$$
where $a = \lVert P_0 \mathbf{x}^k + \mathbf{q}_0 + \sum_{i=1}^{m_1} \lambda_i^k (P_i \mathbf{x}^k + \mathbf{q}_i) + A^T \bm{\gamma}^k\rVert_2 \geq 0$, $b = 2 \lVert \mathbf{x}^k \rVert_2 \geq 0$ and $c = \frac{2 \epsilon_3}{\lVert P \rVert_F} > 0$ with $P \in \mathbb{R}^{m_1 n_1 \times n_1}$ denoting the stacked matrix $\left(\begin{array}{c}P_1 \\[-2pt] \vdots \\[-2pt] P_{m_1}\end{array}\right)$;
\vspace{8pt}
\item $\rho_4 =
\left\{
\begin{array}{ll}
\displaystyle\frac{\epsilon_4}{\lVert Q \rVert_F}, &\text{if }\lVert Q \rVert_F \not= 0 \\[8pt]
\epsilon_4, &\text{if }\lVert Q \rVert_F = 0
\end{array}
\right.$,
where $Q \in \mathbb{R}^{m_1 \times n_1}$ denotes matrix $\left(\begin{array}{c}\mathbf{q}_1^T \\[-2pt] \vdots \\ \mathbf{q}_{m_1}^T\end{array}\right)$, with the $\mathbf{q}_i$'s being the vectors in the linear terms of $\mathbf{x}$ in the QCQP \eqref{eq: QCQP Problem Form};
\vspace{8pt}
\item $\rho_5(\mathbf{x}^k) = \left\{
\begin{array}{ll}
\displaystyle\frac{\epsilon_5}{\lVert \mathbf{x}^k \rVert_2 \lVert P \rVert_F}, &\text{if }\lVert \mathbf{x}^k \rVert_2 \not= 0 \\[8pt]
\epsilon_5, &\text{if }\lVert \mathbf{x}^k \rVert_2 = 0
\end{array}
\right.$;
\vspace{8pt}
\item $\rho_6 = \left\{
\begin{array}{ll}
\displaystyle\frac{\epsilon_6}{\lVert C \rVert_F}, &\text{if }\lVert C \rVert_F \not= 0 \\[8pt]
\epsilon_6, &\text{if }\lVert C \rVert_F = 0 
\end{array}
\right.$,
where $C \in \mathbb{R}^{m_2 \times n_2}$ denotes matrix $\left(\begin{array}{c}\mathbf{c}_1^T \\[-2pt] \vdots \\ \mathbf{c}_{m_2}^T\end{array}\right)$, with the $\mathbf{c}_j$'s being the vectors in the linear terms of $\mathbf{u}$ in the QCQP \eqref{eq: QCQP Problem Form};
\vspace{8pt}
\item $\rho_7 = \left\{
\begin{array}{ll}
\displaystyle\frac{\epsilon_7}{\lVert A \rVert_F}, &\text{if }\lVert A \rVert_F \not= 0 \\[8pt]
\epsilon_7, &\text{if }\lVert A \rVert_F = 0
\end{array}
\right.$, 
where $A$ is the matrix in the linear constraint $A \mathbf{x} + B \mathbf{u} = \mathbf{b}$ in \eqref{eq: QCQP Problem Form};
\vspace{8pt}
\item $\rho_8 = \left\{
\begin{array}{ll}
\displaystyle\frac{\epsilon_8}{\lVert B \rVert_F}, &\text{if }\lVert B \rVert_F \not= 0 \\[8pt]
\epsilon_8, &\text{if }\lVert B \rVert_F = 0
\end{array}
\right.$, where $B$ is the matrix in the linear constraint $A \mathbf{x} + B \mathbf{u} = \mathbf{b}$ in \eqref{eq: QCQP Problem Form}.
\end{enumerate}
$\hfill\Box$

\par While the rules to update the step-size $\rho^{k+1}$ may appear to be very cumbersome, the calculations are actually quite straightforward. Since the Frobenius norm of all matrices can be obtained in advance, the values of $\rho_1$, $\rho_4$, $\rho_6$, $\rho_7$ and $\rho_8$ are pre-determined. Given a current solution $(\mathbf{x}^k, \mathbf{u}^k, \bm{\lambda}^k, \bm{\gamma}^k)$, $\rho_2$, $\rho_{3}$ and $\rho_5$ can also be easily calculated. The minimum of all the $\rho_s$'s then determines the value of the adaptive step size $\rho^{k+1}$. 

\section{Proofs in Section 3}\label{app: Proof_Sec3}
\subsection{Proof of Proposition~\ref{prp: Distance}}
We first prove the inequality \eqref{eq: primal distance}. Consider the linear approximation of the Lagrangian function of a QCQP, as defined in \eqref{eq: R_func}, with a given point $\zeta^k \equiv (\mathbf{x}^k, \lambda^k, \gamma^k)$. Let $\widehat{\mathbf{z}} = (\mathbf{y}^{k+1}, \mathbf{v}^{k+1})$, the $(k+1)$-th iteration of the primal predictor of $\mathbf{x}^k$ and $\mathbf{u}^k$ in the PC$^2$PM algorithm, as given in \eqref{eq: x primal predictor update} and \eqref{eq: u primal predictor update}, respectively. By Lemma \ref{lem: Lemma2}, we know that $\widehat{\mathbf{z}}$ is the unique minimizer of the corresponding proximal minimization problem in \eqref{eq: y_k+1}. By defining $\bar{\mathbf{z}} = (\mathbf{x}^k, \mathbf{u}^k)$ and $\mathbf{z} = (\mathbf{x}^{k+1}, \mathbf{u}^{k+1})$, and using Lemma \ref{lem: Lemma1}, we have that 
\begin{equation}
2\rho^{k+1}\Bigg[\mathcal{R}(\widehat{\mathbf{z}}; \zeta^k) - \mathcal{R}(\mathbf{z};\zeta^k)\Bigg] \leq \lVert \bar{\mathbf{z}} - \mathbf{z} \rVert_2^2 - \lVert \widehat{\mathbf{z}} - \mathbf{z} \rVert_2^2 - \lVert \widehat{\mathbf{z}} - \bar{\mathbf{z}} \rVert_2^2, 
\end{equation}
which leads to the following expanded inequality
\begingroup
\begin{align}
&2 \rho^{k+1}\Bigg\{(P_0 \mathbf{x}^k + \mathbf{q}_0)^T \mathbf{y}^{k+1} + \mathbf{c}_0^T \mathbf{v}^{k+1} + r_0 \nonumber \\
&\hspace*{32pt} + \sum_{i=1}^{m_1} \lambda_i^k \big[(P_i \mathbf{x}^k + \mathbf{q}_i)^T \mathbf{y}^{k+1} + \mathbf{c}_i^T \mathbf{v}^{k+1} + r_i\big] \nonumber \\
&\hspace*{32pt} + (\bm{\gamma}^k)^T (A \mathbf{y}^{k+1} + B \mathbf{v}^{k+1} - \mathbf{b})\Bigg\} \nonumber \\
- &2 \rho^{k+1}\Bigg\{(P_0 \mathbf{x}^k + \mathbf{q}_0)^T \mathbf{x}^{k+1} + \mathbf{c}_0^T \mathbf{u}^{k+1} + r_0 \nonumber \\
&\hspace*{32pt} + \sum_{i=1}^{m_1} \lambda_i^k \big[(P_i \mathbf{x}^k + \mathbf{q}_i)^T \mathbf{x}^{k+1} + \mathbf{c}_i^T \mathbf{u}^{k+1} + r_i\big] \nonumber \\
&\hspace*{32pt} + (\bm{\gamma}^k)^T (A \mathbf{x}^{k+1} + B \mathbf{u}^{k+1} - \mathbf{b})\Bigg\} \nonumber \\[8pt]
\leq &\ \Big(\lVert \mathbf{x}^k - \mathbf{x}^{k+1} \rVert_2^2  + \lVert \mathbf{u}^k - \mathbf{u}^{k+1} \rVert_2^2 \Big) \nonumber \\
- &\ \Big(\lVert \mathbf{y}^{k+1} + \mathbf{x}^{k+1} \rVert_2^2 + \lVert \mathbf{v}^{k+1} - \mathbf{u}^{k+1} \rVert_2^2 \Big) \nonumber \\
- &\ \Big(\lVert \mathbf{y}^{k+1} - \mathbf{x}^k \rVert_2^2 + \lVert \mathbf{v}^{k+1} - \mathbf{u}^k \rVert_2^2\Big). \label{eq: Ineq1}
\end{align}
\endgroup
Now consider the $\mathcal{R}$ function at a different given point $\zeta^{k+1} \hspace*{-3pt}\equiv\hspace*{-3pt} (\mathbf{y}^{k+1}\hspace*{-1pt}, \hspace*{-1pt}\mu^{k+1}\hspace*{-1pt}, \hspace*{-1pt}\bm{\nu}^{k+1})$. 
With a slight abuse of notation, we now let $\widehat{\mathbf{z}} = (\mathbf{x}^{k+1}, \mathbf{u}^{k+1})$, the primal correctors at the $(k+1)$-th iteration of the PC$^2$PM algorithm. Also letting $\mathbf{z} = (\mathbf{x}^*, \mathbf{u}^*)$, but keeping $\bar{\mathbf{z}} = (\mathbf{x}^k, \mathbf{u}^k)$, by \eqref{eq: x_k+1} in Lemma \ref{lem: Lemma2} and Lemma \ref{lem: Lemma1}, we have that:
$$
2\rho^{k+1}\Bigg[\mathcal{R}(\widehat{\mathbf{z}}; \zeta^{k+1}) - \mathcal{R}(\mathbf{z};\zeta^{k+1})\Bigg] \leq \lVert \bar{\mathbf{z}} - \mathbf{z} \rVert_2^2 - \lVert \widehat{\mathbf{z}} - \mathbf{z} \rVert_2^2 - \lVert \widehat{\mathbf{z}} - \bar{\mathbf{z}} \rVert_2^2, 
$$
which leads to the following expanded inequality 
\begingroup
\begin{align}
&2 \rho^{k+1}\Bigg\{(P_0 \mathbf{y}^{k+1} + \mathbf{q}_0)^T \mathbf{x}^{k+1} + \mathbf{c}_0^T \mathbf{u}^{k+1} + r_0 \nonumber \\
&\hspace*{32pt} +\sum_{i=1}^{m_1} \mu_i^{k+1} \big[(P_i \mathbf{y}^{k+1} + \mathbf{q}_i)^T \mathbf{x}^{k+1} + \mathbf{c}_i^T \mathbf{u}^{k+1} + r_i\big] \nonumber \\
&\hspace*{32pt} + (\bm{\nu}^{k+1})^T (A \mathbf{x}^{k+1} + B \mathbf{u}^{k+1} - \mathbf{b})\Bigg\} \nonumber \\
- &2 \rho^{k+1}\Bigg\{(P_0 \mathbf{y}^{k+1} + \mathbf{q}_0)^T \mathbf{x}^* + \mathbf{c}_0^T \mathbf{u}^* + r_0 \nonumber \\
&\hspace*{32pt} +\sum_{i=1}^{m_1} \mu_i^{k+1} \big[(P_i \mathbf{y}^{k+1} + \mathbf{q}_i)^T \mathbf{x}^* + \mathbf{c}_i^T \mathbf{u}^* + r_i\big] \nonumber \\
&\hspace*{32pt} + (\bm{\nu}^{k+1})^T (A \mathbf{x}^* + B \mathbf{u}^* - \mathbf{b})\Bigg\} \nonumber \\[8pt]
\leq &\ \Big(\lVert \mathbf{x}^k - \mathbf{x}^* \rVert_2^2 + \lVert \mathbf{u}^k - \mathbf{u}^* \rVert_2^2 \Big) \nonumber \\
- &\ \Big(\lVert \mathbf{x}^{k+1} - \mathbf{x}^* \rVert_2^2 + \lVert \mathbf{u}^{k+1} - \mathbf{u}^* \rVert_2^2 \Big) \nonumber \\
- &\ \Big(\lVert \mathbf{x}^{k+1} - \mathbf{x}^k \rVert_2^2 + \lVert \mathbf{u}^{k+1} - \mathbf{u}^k \rVert_2^2\Big). \label{eq: Ineq2}
\end{align}
\endgroup
The final piece to derive inequality \eqref{eq: primal distance} is to utilize Lemma \ref{lem: Lemma3}. Let $(\mathbf{x}^*\hspace*{-1pt}, \hspace*{-1pt}\mathbf{u}^*\hspace*{-1pt}, \hspace*{-1pt}\bm{\lambda}^*\hspace*{-1pt}, \hspace*{-1pt}\bm{\gamma}^*\hspace*{-1pt})$ be a saddle point of QCQP \eqref{eq: QCQP Problem Form}, and again, $\zeta^{k+1} = (\mathbf{y}^{k+1}, \mu^{k+1}, \bm{\nu}^{k+1})$. By Lemma \ref{lem: Lemma3}, we have that
\begin{equation}
\begin{aligned}
&\mathcal{R}(\mathbf{x}^*, \mathbf{u}^*; \zeta^{k+1}) - 
\mathcal{R}(\mathbf{y}^{k+1}, \mathbf{v}^{k+1} ;\zeta^{k+1}) \\[8pt]
\leq & \displaystyle \sum_{i=1}^m \bigg[ (\lambda_i^* - \mu_i^{k+1}) \bigg(\frac{1}{2} \mathbf{y}^{{k+1}^T} P_i \mathbf{y}^{k+1} + \mathbf{q}_i^T \mathbf{y}^{k+1} + \mathbf{c}_i^T \mathbf{v}^{k+1} + r_i\bigg)\bigg] \\[15pt]
&+ (\bm{\gamma}^* - \bm{\nu}^{k+1})^T (A \mathbf{y}^{k+1} + B \mathbf{v}^{k+1} - \mathbf{b}).
\end{aligned}
\end{equation}
Multiplying both sides by $2 \rho^{k+1}$ and expanding the $\mathcal{R}$ function, we have that
\begingroup
\begin{align}
&2 \rho^{k+1} \Bigg\{(P_0 \mathbf{y}^{k+1} + \mathbf{q}_0)^T \mathbf{x}^* + \mathbf{c}_0^T \mathbf{u}^* + r_0 \nonumber \\
&\hspace*{32pt} +\sum_{i=1}^m \mu_i^{k+1} \big[(P_i \mathbf{y}^{k+1} + \mathbf{q}_i)^T \mathbf{x}^* + \mathbf{c}_i^T \mathbf{u}^* + r_i\big] \nonumber \\
&\hspace*{32pt} + (\bm{\nu}^{k+1})^T (A \mathbf{x}^* + B \mathbf{u}^* - \mathbf{b})\Bigg\} \nonumber \\
- &2 \rho^{k+1} \Bigg\{(P_0 \mathbf{y}^{k+1} + \mathbf{q}_0)^T \mathbf{y}^{k+1} + \mathbf{c}_0^T \mathbf{v}^{k+1} + r_0 \nonumber \\
&\hspace*{32pt} + \sum_{i=1}^m \mu_i^{k+1} \big[(P_i \mathbf{y}^{k+1} + \mathbf{q}_i)^T \mathbf{y}^{k+1} + \mathbf{c}_i^T \mathbf{v}^{k+1} + r_i\big] \nonumber \\
& \hspace*{32pt} + (\bm{\nu}^{k+1})^T (A \mathbf{y}^{k+1} + B \mathbf{v}^{k+1} - \mathbf{b})\Bigg\} \nonumber \\[8pt]
\leq &\ 2 \rho^{k+1} \Bigg\{\sum_{i=1}^m (\lambda_i^* - \mu_i^{k+1}) \big[\frac{1}{2} (\mathbf{y}^{k+1})^T P_i \mathbf{y}^{k+1} + \mathbf{q}_i^T \mathbf{y}^{k+1} + \mathbf{c}_i^T \mathbf{v}^{k+1} + r_i\big] \nonumber \\ 
&\hspace*{35pt} + (\bm{\gamma}^* - \bm{\nu}^{k+1})^T (A \mathbf{y}^{k+1} + B \mathbf{v}^{k+1} - \mathbf{b})\Bigg\}.  \label{eq: Ineq3}
\end{align}
\endgroup
Adding the three inequalities \eqref{eq: Ineq1}, \eqref{eq: Ineq2} and \eqref{eq: Ineq3} yields the inequality \eqref{eq: primal distance} in Proposition~\ref{prp: Distance}. 
\par To prove the second inequality, \eqref{eq: dual distance}, in Proposition \ref{prp: Distance}, we use a similar approach as above, just replacing the linear approximation function $\mathcal{R}$ with the original Lagrangian function $\mathcal{L}$. More specifically, let $\widehat{\mathbf{z}} = (\bm{\mu}^{k+1}, \bm{\nu}^{k+1})$. By \eqref{eq: mu_k+1} in Lemma \ref{lem: Lemma2}, we know that 
\begin{equation}
\begin{aligned}
&\widehat{\mathbf{z}} \coloneqq (\bm{\mu}^{k+1},\ \bm{\nu}^{k+1}) \\
= &\underset{\bm{\lambda} \in \mathbb{R}_+^{m_1},\ \bm{\gamma} \in \mathbb{R}^{m_2}}{\argmin} -\mathcal{L}(\mathbf{x}^k, \mathbf{u}^k, \bm{\lambda}, \bm{\gamma}) + \frac{1}{2 \rho^{k+1}} \lVert \bm{\lambda} - \bm{\lambda}^k \rVert_2^2 + \frac{1}{2 \rho^{k+1}} \lVert \bm{\gamma} - \bm{\gamma}^k \rVert_2^2.
\end{aligned}
\end{equation}
Letting $\bar{\mathbf{z}} = (\bm{\lambda}^k, \bm{\gamma}^k)$ and choosing a specific $\mathbf{z} = (\bm{\lambda}^{k+1}, \bm{\gamma}^{k+1})$, we use Lemma \ref{lem: Lemma1} to obtain that
\begin{equation}
2\rho^{k+1}\Bigg[ \bigg(-\mathcal{L} (\mathbf{x}^k,\mathbf{u}^k;\widehat{\mathbf{z}}) \bigg) - \Bigg(-\mathcal{L} (\mathbf{x}^k,\mathbf{u}^k; \mathbf{z}) \Bigg) \Bigg] \leq \lVert \bar{\mathbf{z}} - \mathbf{z} \rVert_2^2 - \lVert \widehat{\mathbf{z}} - \mathbf{z} \rVert_2^2 - \lVert \widehat{\mathbf{z}} - \bar{\mathbf{z}} \rVert_2^2, 
\end{equation}
which yields the following expanded inequality: 
\begingroup
\begin{align}
&2 \rho^{k+1}\Bigg\{\sum_{i=1}^{m_1} (\lambda_i^{k+1} - \mu_i^{k+1}) \bigg[\frac{1}{2} (\mathbf{x}^k)^T P_i \mathbf{x}^k + \mathbf{q}_0^T \mathbf{x}^k + \mathbf{c}_0^T \mathbf{u}^k + r_i\bigg] \nonumber \\
&\hspace*{32pt} + (\bm{\gamma}^{k+1} - \bm{\nu}^{k+1})^T (A \mathbf{x}^k + B \mathbf{u}^k - \mathbf{b})\Bigg\} \nonumber \\
\leq &\Big(\lVert \bm{\lambda}^k - \bm{\lambda}^{k+1} \rVert_2^2 + \lVert \bm{\gamma}^k - \bm{\gamma}^{k+1} \rVert_2^2\Big) \nonumber \\
- &\Big(\lVert \bm{\mu}^{k+1} - \bm{\lambda}^{k+1} \rVert_2^2 + \lVert \bm{\nu}^{k+1} - \bm{\gamma}^{k+1} \rVert_2^2\Big) - \Big(\lVert \bm{\mu}^{k+1} - \bm{\lambda}^k \rVert_2^2 + \lVert \bm{\nu}^{k+1} - \bm{\gamma}^k \rVert_2^2\Big). \label{eq: primal_dist_bound}
\end{align}
\endgroup
Similarly, again with some abuse of notation, letting  $\widehat{\mathbf{z}} = (\bm{\lambda}^{k+1}, \bm{\gamma}^{k+1})$, by \eqref{eq: lambda_k+1} in Lemma \ref{lem: Lemma2}, we have that 
\begin{equation}
\begin{aligned}
&\widehat{\mathbf{z}} \coloneqq (\bm{\lambda}^{k+1}, \bm{\gamma}^{k+1}) \\
= &\underset{\bm{\lambda} \in \mathbb{R}_+^{m_1},\ \bm{\gamma} \in \mathbb{R}^{m_2}}{\argmin} \hspace*{-4pt}-\mathcal{L}(\mathbf{y}^{k+1}, \mathbf{v}^{k+1}, \bm{\lambda}, \bm{\gamma}) + \frac{1}{2 \rho^{k+1}} \lVert \bm{\lambda} - \bm{\lambda}^k \rVert_2^2 + \frac{1}{2 \rho^{k+1}} \lVert \bm{\gamma} - \bm{\gamma}^k \rVert_2^2.
\end{aligned}
\end{equation}
By choosing $\mathbf{z}$ to be $(\bm{\lambda}^*, \bm{\gamma}^*)$, while keeping $\bar{\mathbf{z}}$ at $(\bm{\lambda}^k, \bm{\gamma}^k)$, we have from Lemma~\ref{lem: Lemma1} that
\begin{equation}
\begin{aligned}
&2\rho^{k+1}\Bigg[ \bigg(-\mathcal{L} (\mathbf{y}^{k+1},\mathbf{v}^{k+1};\widehat{\mathbf{z}}) \bigg) - 
\Bigg(-\mathcal{L} (\mathbf{y}^{k+1},\mathbf{v}^{k+1}; \mathbf{z}) \Bigg) \Bigg] \\[10pt]
\leq &\lVert \bar{\mathbf{z}} - \mathbf{z} \rVert_2^2 - \lVert \widehat{\mathbf{z}} - \mathbf{z} \rVert_2^2 - \lVert \widehat{\mathbf{z}} - \bar{\mathbf{z}} \rVert_2^2,
\end{aligned}
\end{equation}
which yields the following expanded inequality:
\begin{align}
&2 \rho^{k+1}\Bigg\{\sum_{i=1}^{m_1} (\lambda_i^* - \lambda_i^{k+1}) \bigg[\frac{1}{2} (\mathbf{y}^{k+1})^T P_i \mathbf{y}^{k+1} + \mathbf{q}_0^T \mathbf{y}^{k+1} + \mathbf{c}_0^T \mathbf{v}^{k+1} + r_i\bigg] \nonumber \\
&\hspace*{32pt} + (\bm{\gamma}^* - \bm{\gamma}^{k+1})^T (A \mathbf{y}^{k+1} + B \mathbf{v}^{k+1} - \mathbf{b})\Bigg\} \nonumber \\
\leq &\ \Big(\lVert \bm{\lambda}^k - \bm{\lambda}^* \rVert_2^2 + \lVert \bm{\gamma}^k - \bm{\gamma}^* \rVert_2^2\Big) \nonumber \\
- &\ \Big(\lVert \bm{\lambda}^{k+1} - \bm{\lambda}^* \rVert_2^2 + \lVert \bm{\gamma}^{k+1} - \bm{\gamma}^* \rVert_2^2\Big) - \Big(\lVert \bm{\lambda}^{k+1} - \bm{\lambda}^k \rVert_2^2 + \lVert \bm{\gamma}^{k+1} - \bm{\gamma}^k \rVert_2^2\Big). \label{eq: dual_dist_bound} 
\end{align}
Adding the two inequalities \eqref{eq: primal_dist_bound} and \eqref{eq: dual_dist_bound} leads to the second inequality, \eqref{eq: dual distance}, in Proposition \ref{prp: Distance}.
\par \hfill$\Box$
\subsection{Proof of Theorem \ref{thm: Converge}}
By adding the two inequalities \eqref{eq: primal distance} and \eqref{eq: dual distance} in Proposition \ref{prp: Distance}, we have that
\begingroup
\begin{align}
&\lVert \mathbf{x}^{k+1} - \mathbf{x}^* \rVert_2^2 + \lVert \mathbf{u}^{k+1} - \mathbf{u}^* \rVert_2^2 + \lVert \bm{\lambda}^{k+1} - \bm{\lambda}^* \rVert_2^2 + \lVert \bm{\gamma}^{k+1} - \bm{\gamma}^* \rVert_2^2 \nonumber \\
\leq &\ \lVert \mathbf{x}^k - \mathbf{x}^* \rVert_2^2 + \lVert \mathbf{u}^k - \mathbf{u}^* \rVert_2^2 + \lVert \bm{\lambda}^k - \bm{\lambda}^* \rVert_2^2 + \lVert \bm{\gamma}^k - \bm{\gamma}^* \rVert_2^2 \nonumber \\
- &\ \Big(\lVert \mathbf{y}^{k+1} - \mathbf{x}^{k+1} \rVert_2^2 + \lVert \mathbf{v}^{k+1} - \mathbf{u}^{k+1} \rVert_2^2 + \lVert \mathbf{y}^{k+1} - \mathbf{x}^k \rVert_2^2 + \lVert \mathbf{v}^{k+1} - \mathbf{u}^k \rVert_2^2\Big) \nonumber \\
- &\ \Big(\lVert \bm{\mu}^{k+1} - \bm{\lambda}^{k+1} \rVert_2^2 + \lVert \bm{\nu}^{k+1} - \bm{\gamma}^{k+1} \rVert_2^2 + \lVert \bm{\mu}^{k+1} - \bm{\lambda}^k \rVert_2^2 + \lVert \bm{\nu}^{k+1} - \bm{\gamma}^k \rVert_2^2\Big) \nonumber \\
+ &\underbrace{2 \rho^{k+1} (\mathbf{y}^{k+1} - \mathbf{x}^{k+1})^T P_0 (\mathbf{y}^{k+1} - \mathbf{x}^k)}_\text{(a)} \nonumber \\
+ &\sum_{i=1}^{m_1} \underbrace{2 \rho^{k+1} \mu_i^{k+1} (\mathbf{y}^{k+1} - \mathbf{x}^{k+1})^T P_i (\mathbf{y}^{k+1} - \mathbf{x}^k)}_{\text{(b)}_i} \nonumber \\[-4pt]
+ &\underbrace{2\rho^{k+1} \sum_{i=1}^{m_1} (\lambda_i^{k+1} - \mu_i^{k+1}) \big[\frac{1}{2} (\mathbf{y}^{k+1})^T P_i \mathbf{y}^{k+1} - \frac{1}{2} (\mathbf{x}^k)^T P_i \mathbf{x}^k\big]}_\text{(c)} \nonumber \\[-6pt]
+ &\underbrace{2 \rho^{k+1} \sum_{i=1}^{m_1} (\lambda_i^{k+1} - \mu_i^{k+1}) \mathbf{q}_i^T (\mathbf{y}^{k+1} - \mathbf{x}^k)}_\text{(d)} \nonumber \\[-6pt]
+ &\underbrace{2 \rho^{k+1} \sum_{i=1}^{m_1} (\mu_i^{k+1} - \lambda_i^k) \mathbf{q}_i^T (\mathbf{y}^{k+1} - \mathbf{x}^{k+1})}_\text{(e)} \nonumber \\[-6pt]
+ &\underbrace{2 \rho^{k+1} \sum_{i=1}^{m_1} (\mu_i^{k+1} - \lambda_i^k) (P_i \mathbf{x}^k)^T (\mathbf{y}^{k+1} - \mathbf{x}^{k+1})}_\text{(f)} \nonumber \\[-6pt]
+ &\underbrace{2 \rho^{k+1} \sum_{i=1}^{m_1} (\lambda_i^{k+1} - \mu_i^{k+1}) \mathbf{c}_i^T (\mathbf{v}^{k+1} - \mathbf{u}^k)}_\text{(g)} \nonumber \\[-6pt]
+ &\underbrace{2 \rho^{k+1} \sum_{i=1}^{m_1} (\mu_i^{k+1} - \lambda_i^k) \mathbf{c}_i^T (\mathbf{v}^{k+1} - \mathbf{u}^{k+1})}_\text{(h)} \nonumber\\
+ &\underbrace{2 \rho^{k+1} (\bm{\gamma}^{k+1} - \bm{\nu}^{k+1})^T A (\mathbf{y}^{k+1} - \mathbf{x}^k)}_\text{(i)} + \underbrace{2 \rho^{k+1} (\bm{\nu}^{k+1} - \bm{\gamma}^k)^T A (\mathbf{y}^{k+1} - \mathbf{x}^{k+1})}_\text{(j)} \nonumber \\
+ &\underbrace{2 \rho^{k+1} (\bm{\gamma}^{k+1} - \bm{\nu}^{k+1})^T B (\mathbf{v}^{k+1} - \mathbf{u}^k)}_\text{(k)} + \underbrace{2 \rho^{k+1} (\bm{\nu}^{k+1} - \bm{\gamma}^k)^T B (\mathbf{v}^{k+1} - \mathbf{u}^{k+1})}_{\text{(l)}}. \label{eq: adding two inequalities}
\end{align}
\endgroup
Next, we establish an upper bound for each term of the term from (a) to (l) in \eqref{eq: adding two inequalities} using the adaptive step size $\rho^{k+1} = \rho(\mathbf{x}^k, \mathbf{u}^k, \bm{\lambda}^k, \bm{\gamma}^k)$, as defined in \eqref{eq: adaptive step size}.
\begin{enumerate}[label=(\alph*)]
\item First, we want to show that 
\begin{equation}\label{eq: (a)}
\text{(a)} \leq \epsilon_1 \Big(\lVert \mathbf{y}^{k+1} - \mathbf{x}^{k+1} \rVert_2^2 + \lVert \mathbf{y}^{k+1} - \mathbf{x}^k \rVert_2^2\Big).
\end{equation}
To prove this (and several inequalities below), we first show an extension of the Young's inequality\footnote{Young's inequality states that if $a$ and $b$ are two non-negative real numbers, and $p$ and $q$ are real numbers greater than 1 such that $\frac{1}{p} + \frac{1}{q} = 1$, then $ab < \frac{a^p}{p} + \frac{b^q}{q}$.} on vector products that will play a key role in the following proof.

\parindent=20pt Given any two vectors $\mathbf{z}_1, \mathbf{z}_2 \in \mathbb{R}^n$, we have that
\begin{equation}
\mathbf{z}_1^T \mathbf{z}_2 = \sum_{j=1}^n z_{1j} z_{2j} = \sum_{j=1}^n \Big(\frac{1}{\delta} z_{1j}\Big) \Big(\delta z_{2j}\Big) \leq \sum_{j=1}^n \Big\lvert \frac{1}{\delta} z_{1j} \Big\rvert \Big\lvert \delta z_{2j} \Big\rvert,
\end{equation}
where $\delta$ is a non-zero real number. Applying Young's inequality on each summation term with $p = q = 2$, we obtain that 
\begin{equation}\label{eq: Young's Inequality}
\mathbf{z}_1^T \mathbf{z}_2 \leq \sum_{j=1}^n \left[\frac{1}{2} \Big(\frac{1}{\delta} z_{1j}\Big)^2 + \frac{1}{2} \Big(\delta z_{2j}\Big)^2\right] = \frac{1}{2 \delta^2} \lVert \mathbf{z}_1 \rVert_2^2 + \frac{\delta^2}{2} \lVert \mathbf{z}_2 \rVert_2^2.
\end{equation}
Applying \eqref{eq: Young's Inequality} on (a) yields
\begingroup
\begin{align}
\text{(a)} \leq &2 \rho^{k+1} \Big(\frac{1}{2 \delta^2} \lVert \mathbf{y}^{k+1} - \mathbf{x}^{k+1} \rVert_2^2 + \frac{\delta^2}{2} \lVert P_0 (\mathbf{y}^{k+1} - \mathbf{x}^k) \rVert_2^2\Big) \nonumber \\
\leq &2 \rho^{k+1} \Big(\frac{1}{2 \delta^2} \lVert \mathbf{y}^{k+1} - \mathbf{x}^{k+1} \rVert_2^2 + \frac{\delta^2}{2} \vertiii{P_0}_2^2 \lVert \mathbf{y}^{k+1} - \mathbf{x}^k \rVert_2^2\Big) \nonumber \\
\leq &2 \rho^{k+1} \Big(\frac{1}{2 \delta^2} \lVert \mathbf{y}^{k+1} - \mathbf{x}^{k+1} \rVert_2^2 + \frac{\delta^2}{2} \lVert P_0 \rVert_F^2 \lVert \mathbf{y}^{k+1} - \mathbf{x}^k \rVert_2^2\Big). \label{eq:(a)_inter}
\end{align}
\endgroup
The second inequality holds due to the property that given a matrix $A \in \mathbb{R}^{m \times n}$ and a vector $\mathbf{z} \in \mathbb{R}^n$, $\lVert A \mathbf{z} \rVert_2 \leq \vertiii{A}_2 \lVert \mathbf{z} \rVert_2$ (see Theorem~5.6.2 in \cite{horn2012matrix}), where we use the notation $\vertiii{\cdot}_2$ to denote the matrix norm $\vertiii{A}_2 \coloneqq \underset{\mathbf{z} \not= \mathbf{0}}{\sup} \frac{\lVert A \mathbf{z} \rVert_2}{\lVert \mathbf{z} \rVert_2}$. The last inequality holds due to the property $\vertiii{A}_2 \leq \lVert A \rVert_F$ \cite{golub2013matrix}, where $\lVert A \rVert_F \coloneqq \big(\sum_{i=1}^m \sum_{j=1}^n \lvert A_{ij} \rvert^2\big)^{\frac{1}{2}}$ denotes the Frobenius norm.

\vspace{4pt}
\parindent=20pt From \eqref{eq:(a)_inter}, if $\lVert P_0 \rVert_F \not= 0$, then letting $\delta^2 = \frac{1}{\lVert P_0 \rVert_F}$ yields 
\begin{equation}
\text{(a)} \leq \rho^{k+1} \lVert P_0 \rVert_F \Big(\lVert \mathbf{y}^{k+1} - \mathbf{x}^{k+1} \rVert_2^2 + \lVert \mathbf{y}^{k+1} - \mathbf{x}^k \rVert_2^2\Big).
\end{equation}
Since $\rho^{k+1} \leq \rho_1 = \frac{\epsilon_1}{\lVert P_0 \rVert_F}$, we obtain \eqref{eq: (a)}.
If, on the other hand, $\lVert P_0 \rVert_F = 0$, then letting $\delta^2 = 1$ yields
\begin{equation}
\text{(a)} \leq \rho^{k+1} \Big(\lVert \mathbf{y}^{k+1} - \mathbf{x}^{k+1} \rVert_2^2 + \lVert \mathbf{y}^{k+1} - \mathbf{x}^k \rVert_2^2\Big).
\end{equation}
Since $\rho^{k+1} \leq \rho_1 = \epsilon_1$, \eqref{eq: (a)} is also obtained.
\vspace{10pt}
\item Here we want to show that 
\begin{equation}\label{eq: (b)}
\sum_{i=1}^{m_1} \text{(b)}_i \leq \epsilon_2 \Big(\lVert \mathbf{y}^{k+1} - \mathbf{x}^{k+1} \rVert_2^2 + \lVert \mathbf{y}^{k+1} - \mathbf{x}^k \rVert_2^2\Big).
\end{equation}
Applying \eqref{eq: Young's Inequality} on each term $\text{(b)}_i$ yields
\begin{equation}
\begin{aligned} 
\text{(b)}_i \leq &2 \rho^{k+1} \mu_i^{k+1} \Big(\frac{1}{2 \delta_i^2} \lVert \mathbf{y}^{k+1} - \mathbf{x}^{k+1} \rVert_2^2 + \frac{\delta_i^2}{2} \vertiii{P_i}_2^2 \lVert \mathbf{y}^{k+1} - \mathbf{x}^k \rVert_2^2\Big) \\
\leq &2 \rho^{k+1} \mu_i^{k+1} \Big(\frac{1}{2 \delta_i^2} \lVert \mathbf{y}^{k+1} - \mathbf{x}^{k+1} \rVert_2^2 + \frac{\delta_i^2}{2} \lVert P_i \rVert_F^2 \lVert \mathbf{y}^{k+1} - \mathbf{x}^k \rVert_2^2\Big).
\end{aligned}
\end{equation}
\begin{itemize}[label=$\bullet$]
\vspace{4pt}
\item If $\lVert P_i \rVert_F \not= 0$, then letting $\delta_i^2 = \frac{1}{\lVert P_i \rVert_F}$ yields 
\begin{equation}
\begin{aligned}
\text{(b)}_i \leq &\rho^{k+1} \mu_i^{k+1} \lVert P_i \rVert_F \Big(\lVert \mathbf{y}^{k+1} - \mathbf{x}^{k+1} \rVert_2^2 + \lVert \mathbf{y}^{k+1} - \mathbf{x}^k \rVert_2^2\Big) \\
\leq &\rho^{k+1} \tilde{\mu}_i^{k+1} \lVert P_i \rVert_F \Big(\lVert \mathbf{y}^{k+1} - \mathbf{x}^{k+1} \rVert_2^2 + \lVert \mathbf{y}^{k+1} - \mathbf{x}^k \rVert_2^2\Big),
\end{aligned}
\end{equation}
where $\tilde{\mu}_i^{k+1} \coloneqq \lambda_i^k + \rho^{k+1} \lvert \frac{1}{2} (\mathbf{x}^k)^T P_i \mathbf{x}^k + \mathbf{q}_i^T \mathbf{x}^k + \mathbf{c}_i^T \mathbf{u}^k + r_i \rvert \geq \mu_i^{k+1}$. If we can bound $\rho^{k+1} \tilde{\mu}_i^{k+1} \leq \frac{\epsilon_2}{m_1 \lVert P_i \rVert_F}$, then we can achieve 
\begin{equation}\label{eq: (b)_i}
\text{(b)}_i \leq \frac{\epsilon_2}{m_1} \Big(\lVert \mathbf{y}^{k+1} - \mathbf{x}^{k+1} \rVert_2^2 + \lVert \mathbf{y}^{k+1} - \mathbf{x}^k \rVert_2^2\Big).
\end{equation}
By substituting $a_i = \lvert \frac{1}{2} (\mathbf{x}^k)^T P_i \mathbf{x}^k + \mathbf{q}_i^T \mathbf{x}^k + \mathbf{c}_i^T \mathbf{u}^k + r_i \rvert \geq 0$, $b_i = \lambda_i^k \geq 0$ and $c_i = \frac{\epsilon_2}{m_1 \lVert P_i \rVert_F} > 0$, we can rewrite $\rho^{k+1} \tilde{\mu}_i^{k+1} - \frac{\epsilon_2}{m_1 \lVert P_i \rVert_F}$ as $a_i(\rho^{k+1})^2 + b_i \rho^{k+1} - c_i$, which is simply a quadratic function of $\rho^{k+1}$ with parameters $a_i$, $b_i$ and $c_i$. To bound $\rho^{k+1} \tilde{\mu}_i^{k+1} \leq \frac{\epsilon_2}{m_1 \lVert P_i \rVert_F}$ is equivalent to find proper values of $\rho^{k+1}$ that keep the quadratic function stay below zero.
\begin{itemize}
\item If $a_i = 0$ and $b_i = 0$, then $\rho^{k+1} \in (0, +\infty)$.
\vspace{2pt}
\item If $a_i = 0$ and $b_i > 0$, then $\rho^{k+1} \in (0, \frac{c_i}{b_i}]$.
\vspace{2pt}
\item If $a_i > 0$, then $\rho^{k+1} \in (0, \frac{-b_i + \sqrt{b_i^2 + 4 a_i c_i}}{2 a_i}]$.
\end{itemize}
\vspace{2pt}
Since $\rho^{k+1} \leq \rho_2(\mathbf{x}^k, \mathbf{u}^k, \bm{\lambda}^k) \leq \rho_{2i}(\mathbf{x}^k, \mathbf{u}^k, \bm{\lambda}^k)$, it satisfies all the above three conditions, we then obtain \eqref{eq: (b)_i}, and hence \eqref{eq: (b)}.
\vspace{4pt}
\item If $\lVert P_i \rVert_F = 0$, then letting $\delta_i^2 = 1$ yields 
\begin{equation}
\begin{aligned}
\text{(b)}_i \leq &\rho^{k+1} \mu_i^{k+1} \Big(\lVert \mathbf{y}^{k+1} - \mathbf{x}^{k+1} \rVert_2^2 + \lVert \mathbf{y}^{k+1} - \mathbf{x}^k \rVert_2^2\Big) \\
\leq &\rho^{k+1} \tilde{\mu}_i^{k+1} \Big(\lVert \mathbf{y}^{k+1} - \mathbf{x}^{k+1} \rVert_2^2 + \lVert \mathbf{y}^{k+1} - \mathbf{x}^k \rVert_2^2\Big).
\end{aligned}
\end{equation}
Similarly, if we can bound $\rho^{k+1} \tilde{\mu}_i^{k+1} \leq \frac{\epsilon_2}{m_1}$, then we can also achieve \eqref{eq: (b)_i}. By substituting $a_i = \lvert \frac{1}{2} (\mathbf{x}^k)^T P_i \mathbf{x}^k + \mathbf{q}_i^T \mathbf{x}^k + \mathbf{c}_i^T \mathbf{u}^k + r_i \rvert \geq 0$, $b_i = \lambda_i^k \geq 0$ and $c_i = \frac{\epsilon_2}{m_1} > 0$, we can rewrite $\rho^{k+1} \tilde{\mu}_i^{k+1} - \frac{\epsilon_2}{m_1}$ as $a_i(\rho^{k+1})^2 + b_i \rho^{k+1} - c_i$. The same analysis can be followed as discussed in the case of $\lVert P_i \rVert_F \not= 0$.
\end{itemize}
\vspace{10pt}
\item Next, we want to show that 
\begin{equation}\label{eq: (c)}
\text{(c)} \leq \epsilon_3 \Big(\lVert \bm{\lambda}^{k+1} - \bm{\mu}^{k+1} \rVert_2^2 + \lVert \mathbf{y}^{k+1} - \mathbf{x}^k \rVert_2^2\Big).
\end{equation}
By using $P$ to denote $\left(\begin{array}{c}P_1 \\[-2pt] \vdots \\[-2pt] P_{m_1}\end{array}\right)$, we can rewrite 
\begin{equation}\label{eq:(c)_rewrite}
\text{(c)} = \rho^{k+1} \Big\{(\bm{\lambda}^{k+1} - \bm{\mu}^{k+1})^T \Big[I_{m_1 \times m_1} \otimes (\mathbf{x}^k + \mathbf{y}^{k+1})^T\Big] P (\mathbf{y}^{k+1} - \mathbf{x}^k)\Big\},
\end{equation}
where $\otimes$ denotes the Kronecker product; that is, given a matrix $A \in \mathbb{R}^{m_1 \times n_1}$ and a matrix $B \in \mathbb{R}^{m_2 \times n_2}$, $A \otimes B \coloneqq \left(\begin{array}{ccc}a_{11} B&\cdots&a_{1 m_1} B \\[-2pt] \vdots&&\vdots \\[-2pt] a_{m_1 1} B&\cdots&a_{m_1 n_1} B\end{array}\right)$.
Applying \eqref{eq: Young's Inequality} to \eqref{eq:(c)_rewrite} yields 
\begin{equation}
\begin{aligned}
\text{(c)} \leq &\ \rho^{k+1} \Big(\frac{1}{2 \delta^2} \lVert \bm{\lambda}^{k+1} - \bm{\mu}^{k+1} \rVert_2^2 \\
&\hspace*{28pt} + \frac{\delta^2}{2} \vertiii{I_{m_1 \times m_1} \otimes (\mathbf{x}^k + \mathbf{y}^{k+1})^T}_2^2 \vertiii{P}_2^2 \lVert \mathbf{y}^{k+1} - \mathbf{x}^k \rVert_2^2\Big) \\
\leq &\ \rho^{k+1} \Big(\frac{1}{2 \delta^2} \lVert \bm{\lambda}^{k+1} - \bm{\mu}^{k+1} \rVert_2^2 + \frac{\delta^2}{2} \lVert \mathbf{x}^k + \mathbf{y}^{k+1} \rVert_2^2 \lVert P \rVert_F^2 \lVert \mathbf{y}^{k+1} - \mathbf{x}^k \rVert_2^2\Big).
\end{aligned}
\end{equation}
Since we have the property that $\vertiii{A \otimes B}_2 = \vertiii{A}_2 \vertiii{B}_2$ (see Theorem~8 in \cite{lancaster1972norms}), the last inequality holds due to 
\begin{equation}
\vertiii{I_{m_1 \times m_1} \otimes (\mathbf{x}^k + \mathbf{y}^{k+1})^T}_2^2 =  \vertiii{I_{m_1 \times m_1}}_2^2 \vertiii{(\mathbf{x}^k + \mathbf{y}^{k+1})^T}_2^2,
\end{equation}
together with $\vertiii{I_{m_1 \times m_1}}_2 = 1$ and $\vertiii{(\mathbf{x}^k + \mathbf{y}^{k+1})^T}_2 \leq \lVert (\mathbf{x}^k + \mathbf{y}^{k+1})^T \rVert_F = \lVert \mathbf{x}^k + \mathbf{y}^{k+1} \rVert_2$. Note that $\lVert P \rVert_F \not= 0$, otherwise the QCQP is simply a QP.
\begin{itemize}[label=$\bullet$]
\vspace{4pt}
\item If $\lVert \mathbf{x}^k + \mathbf{y}^{k+1} \rVert_2 \not= 0$, then letting $\delta^2 = \frac{1}{\lVert \mathbf{x}^k + \mathbf{y}^{k+1} \rVert_2 \lVert P \rVert_F}$ yields
\begin{equation}
\begin{aligned}
\text{(c)} \leq &\frac{1}{2} \rho^{k+1} \lVert \mathbf{x}^k + \mathbf{y}^{k+1} \rVert_2 \lVert P \rVert_F \Big(\lVert \bm{\lambda}^{k+1} - \bm{\mu}^{k+1} \rVert_2^2 + \lVert \mathbf{y}^{k+1} - \mathbf{x}^k \rVert_2^2\Big) \\
\leq &\frac{1}{2} \rho^{k+1} \lVert \mathbf{x}^k + \tilde{\mathbf{y}}^{k+1} \rVert_2 \lVert P \rVert_F \Big(\lVert \bm{\lambda}^{k+1} - \bm{\mu}^{k+1} \rVert_2^2 + \lVert \mathbf{y}^{k+1} - \mathbf{x}^k \rVert_2^2\Big),
\end{aligned}
\end{equation}
where $\tilde{y}_j^{k+1} \coloneqq x_j^k + \rho \Big\lvert \big[P_0 \mathbf{x}^k + \mathbf{q}_0 + \sum_{i=1}^{m_1} \lambda_i^k \big(P_i \mathbf{x}^k + \mathbf{q}_i\big) + A^T \bm{\gamma}^k\big]_j \Big\rvert \geq y_j^{k+1}$. If we can bound $\rho^{k+1} \lVert \mathbf{x}^k + \tilde{\mathbf{y}}^{k+1} \rVert_2 \leq \frac{2 \epsilon_3}{\lVert P^T \rVert_F}$, then \eqref{eq: (c)} can be obtained. We first bound
\begingroup
\begin{align}
&\rho^{k+1} \lVert \mathbf{x}^k + \tilde{\mathbf{y}}^{k+1} \rVert_2 - \frac{2 \epsilon_3}{\lVert P \rVert_F} \nonumber \\[-4pt]
\leq &\rho^{k+1} \Big[2 \lVert \mathbf{x}^k \rVert_2  + \rho^{k+1} \lVert P_0 \mathbf{x}^k + \mathbf{q}_0 + \sum_{i=1}^{m_1} \lambda_i^k \big(P_i \mathbf{x}^k + \mathbf{q}_i\big) + A^T \bm{\gamma}^k \rVert_2\Big] \nonumber \\[-4pt]
- &\frac{2 \epsilon_3}{\lVert P \rVert_F}.
\end{align}
\endgroup
By substituting $a = \lVert P_0 \mathbf{x}^k + \mathbf{q}_0 + \sum_{i=1}^{m_1} \lambda_i^k \big(P_i \mathbf{x}^k + \mathbf{q}_i\big) + A^T \bm{\gamma}^k \rVert_2 \geq 0$, $b = 2 \lVert \mathbf{x}^k \rVert_2 \geq 0$ and $c = \frac{2 \epsilon_3}{\lVert P \rVert_F} > 0$, we can bound $\rho^{k+1} \lVert \mathbf{x}^k + \tilde{\mathbf{y}}^{k+1} \rVert_2 - \frac{2 \epsilon_3}{\lVert P \rVert_F}$ using $a (\rho^{k+1})^2 + b \rho^{k+1} - c$, which is simply a quadratic function of $\rho^{k+1}$ with parameters $a$, $b$ and $c$. Bounding $\rho^{k+1} \lVert \mathbf{x}^k + \tilde{\mathbf{y}}^{k+1} \rVert_2 \leq \frac{2 \epsilon_3}{\lVert P \rVert_F}$ can be guaranteed by finding the proper values of $\rho^{k+1}$ that keep the quadratic function stay below zero.
\begin{itemize}
\vspace{2pt}
\item If $a = 0$ and $b = 0$, then $\rho^{k+1} \in (0, +\infty)$.
\vspace{2pt}
\item If $a = 0$ and $b > 0$, then $\rho^{k+1} \in (0, \frac{c}{b}]$.
\vspace{2pt}
\item If $a > 0$, then $\rho^{k+1} \in (0, \frac{-b + \sqrt{b^2 + 4 a c}}{2 a}]$.
\end{itemize}
\vspace{2pt}
Since $\rho^{k+1} \leq \rho_3(\mathbf{x}^k, \bm{\lambda}^k, \bm{\gamma}^k)$, it satisfies all the above three conditions, we obtain \eqref{eq: (c)}.
\vspace{4pt}
\item If $\lVert \mathbf{x}^k + \mathbf{y}^{k+1} \rVert_2 = 0$, then letting $\delta^2 = 1$ yields
\begin{equation}
(c) \leq \frac{1}{2} \rho^{k+1} \Big(\lVert \bm{\lambda}^{k+1} - \bm{\mu}^{k+1} \rVert_2^2 + \lVert \mathbf{y}^{k+1} - \mathbf{x}^k \rVert_2^2\Big)
\end{equation}
Since $\rho^{k+1} \leq \rho_3(\mathbf{x}^k, \bm{\lambda}^k, \bm{\gamma}^k) \leq 2 \epsilon_3$, \eqref{eq: (c)} is also obtained.
\end{itemize}
\vspace{10pt}
\item To show that 
\begin{equation}\label{eq: (d)}
\text{(d)} \leq \epsilon_4 \Big(\lVert \bm{\lambda}^{k+1} - \bm{\mu}^{k+1} \rVert_2^2 + \lVert \mathbf{y}^{k+1} - \mathbf{x}^k \rVert_2^2\Big),
\end{equation}
by letting $Q = \left(\begin{array}{c}\mathbf{q}_1^T \\[-2pt] \vdots \\ \mathbf{q}_{m_1}^T\end{array}\right)$, we can rewrite that 
\begin{equation}\label{eq: (d)_rewrite}
\text{(d)} = 2 \rho^{k+1} \Big[(\bm{\lambda}^{k+1} - \bm{\mu}^{k+1})^T Q^T (\mathbf{y}^{k+1} - \mathbf{x}^k)\Big].
\end{equation}
Applying \eqref{eq: Young's Inequality} to \eqref{eq: (d)_rewrite} yields
\begin{equation}
\begin{aligned}
\text{(d)} \leq &2 \rho^{k+1} \Big(\frac{1}{2 \delta^2} \lVert \bm{\lambda}^{k+1} - \bm{\mu}^{k+1} \rVert_2^2 + \frac{\delta^2}{2} \vertiii{Q}_2^2 \lVert \mathbf{y}^{k+1} - \mathbf{x}^k \rVert_2^2\Big) \\
\leq &2 \rho^{k+1} \big(\frac{1}{2 \delta^2} \lVert \bm{\lambda}^{k+1} - \bm{\mu}^{k+1} \rVert_2^2 + \frac{\delta^2}{2} \lVert Q \rVert_F^2 \lVert \mathbf{y}^{k+1} - \mathbf{x}^k \rVert_2^2\big).
\end{aligned}
\end{equation}
\begin{itemize}[label=$\bullet$]
\item If $\lVert Q \rVert_F \not= 0$, then letting $\delta^2 = \frac{1}{\lVert Q \rVert_F}$ yields
\begin{equation}
\text{(d)} \leq \rho^{k+1} \lVert Q \rVert_F \Big(\lVert \bm{\lambda}^{k+1} - \bm{\mu}^{k+1} \rVert_2^2 + \lVert \mathbf{y}^{k+1} - \mathbf{x}^k \rVert_2^2\Big).
\end{equation}
Since $\rho^{k+1} \leq \rho_4 = \frac{\epsilon_4}{\lVert Q \rVert_F}$, we obtain \eqref{eq: (d)}.
\vspace{4pt}
\item If $\lVert Q \rVert_F = 0$, then letting $\delta^2 = 1$ yields
\begin{equation}
\text{(d)} \leq \rho^{k+1} \Big(\lVert \bm{\lambda}^{k+1} - \bm{\mu}^{k+1} \rVert_2^2 + \lVert \mathbf{y}^{k+1} - \mathbf{x}^k \rVert_2^2\Big).  
\end{equation}
Since $\rho^{k+1} \leq \rho_4 = \epsilon_4$, \eqref{eq: (d)} is also obtained.
\end{itemize}
\vspace{10pt}
\item Similarly, to show 
\begin{equation}\label{eq: (e)}
\text{(e)} \leq \epsilon_4 \Big(\lVert \bm{\mu}^{k+1} - \bm{\lambda}^k \rVert_2^2 + \lVert \mathbf{y}^{k+1} - \mathbf{x}^{k+1} \rVert_2^2\Big),
\end{equation}
we can rewrite that 
\begin{equation}\label{eq:(e)_rewrite}
\text{(e)} = 2 \rho^{k+1} \Big[(\bm{\mu}^{k+1} - \bm{\lambda}^k)^T Q (\mathbf{y}^{k+1} - \mathbf{x}^{k+1})\Big].
\end{equation}
Applying \eqref{eq: Young's Inequality} to \eqref{eq:(e)_rewrite} yields that 
\begin{equation}
\begin{aligned}
\text{(e)} \leq &2 \rho^{k+1} \Big(\frac{1}{2 \delta^2} \lVert \bm{\mu}^{k+1} - \bm{\lambda}^k \rVert_2^2 + \frac{\delta^2}{2} \vertiii{Q}_2^2 \lVert \mathbf{y}^{k+1} - \mathbf{x}^{k+1} \rVert_2^2\Big) \\
\leq &2 \rho^{k+1} \Big(\frac{1}{2 \delta^2} \lVert \bm{\mu}^{k+1} - \bm{\lambda}^k \rVert_2^2 + \frac{\delta^2}{2} \lVert Q \rVert_F^2 \lVert \mathbf{y}^{k+1} - \mathbf{x}^{k+1} \rVert_2^2\Big).
\end{aligned}
\end{equation}
\begin{itemize}[label=$\bullet$]
\item If $\lVert Q \rVert_F \not= 0$, then letting $\delta^2 = \frac{1}{\lVert Q \rVert_F}$ yields
\begin{equation}
\text{(e)} \leq \rho^{k+1} \lVert Q \rVert_F \Big(\lVert \bm{\mu}^{k+1} - \bm{\lambda}^k \rVert_2^2 + \lVert \mathbf{y}^{k+1} - \mathbf{x}^{k+1} \rVert_2^2\Big).  
\end{equation}
Since $\rho^{k+1} \leq \rho_4 = \frac{\epsilon_4}{\lVert Q \rVert_F}$, we obtain \eqref{eq: (e)}.
\vspace{4pt}
\item If $\lVert Q \rVert_F = 0$, then letting $\delta^2 = 1$ yields
\begin{equation}
\text{(e)} \leq \rho^{k+1} \Big(\lVert \bm{\mu}^{k+1} - \bm{\lambda}^k \rVert_2^2 + \lVert \mathbf{y}^{k+1} - \mathbf{x}^{k+1} \rVert_2^2\Big).  
\end{equation}
Since $\rho^{k+1} \leq \rho_4 = \epsilon_4$, \eqref{eq: (e)} is also obtained.
\end{itemize}
\vspace{10pt}
\item To show 
\begin{equation}\label{eq: (f)}
\text{(f)} \leq \epsilon_5 \Big(\lVert \bm{\mu}^{k+1} - \bm{\lambda}^k \rVert_2^2 + \lVert \mathbf{y}^{k+1} - \mathbf{x}^{k+1} \rVert_2^2\Big),
\end{equation}
 we can rewrite
\begin{equation}
\text{(f)} = 2 \rho^{k+1} \Big\{(\bm{\mu}^{k+1} - \bm{\lambda}^k)^T \Big[I_{m_1 \times m_1} \otimes (\mathbf{x}^k)^T\Big] P (\mathbf{y}^{k+1} - \mathbf{x}^{k+1})\Big\}.
\end{equation}
Applying \eqref{eq: Young's Inequality}, we have that 
\begin{equation}
\begin{aligned}
\text{(f)} \leq &2 \rho^{k+1} \Big(\frac{1}{2 \delta^2} \lVert \bm{\mu}^{k+1} - \bm{\lambda}^k \rVert_2^2 \\
&\hspace*{30pt} + \frac{\delta^2}{2} \vertiii{I_{m_1 \times m_1} \otimes (\mathbf{x}^k)^T}_2^2 \vertiii{P}_2^2 \lVert \mathbf{y}^{k+1} - \mathbf{x}^{k+1} \rVert_2^2\Big) \\
\leq &2 \rho^{k+1} \Big(\frac{1}{2 \delta^2} \lVert \bm{\mu}^{k+1} - \bm{\lambda}^k \rVert_2^2 + \frac{\delta^2}{2} \lVert \mathbf{x}^k \rVert_2^2 \lVert P \rVert_F^2 \lVert \mathbf{y}^{k+1} - \mathbf{x}^{k+1} \rVert_2^2\Big).
\end{aligned}
\end{equation}
Similarly, the last inequality holds due to 
$$
\vertiii{I_{m_1 \times m_1} \otimes (\mathbf{x}^k)^T}_2^2 = \vertiii{I_{m_1 \times m_1}}_2^2 \vertiii{(\mathbf{x}^k)^T}_2^2.
$$
\begin{itemize}[label=$\bullet$]
\item If $\lVert \mathbf{x}^k \rVert_2 \not= 0$, then letting $\delta^2 = \frac{1}{\lVert \mathbf{x}^k \rVert_2 \lVert P \rVert_F}$ yields
\begin{equation}
\text{(f)} \leq \rho^{k+1} \lVert \mathbf{x}^k \rVert_2 \lVert P \rVert_F \Big(\lVert \bm{\mu}^{k+1} - \bm{\lambda}^k \rVert_2^2 + \lVert \mathbf{y}^{k+1} - \mathbf{x}^{k+1} \rVert_2^2\Big).
\end{equation}
Since $\rho^{k+1} \leq \rho_5(\mathbf{x}^k) = \frac{\epsilon_5}{\lVert \mathbf{x}^k \rVert_2 \lVert P \rVert_F}$, we obtain \eqref{eq: (f)}.
\vspace{4pt}
\item If $\lVert \mathbf{x}^k \rVert_2 = 0$, then letting $\delta^2 = 1$ yields
\begin{equation}
\text{(f)} \leq \rho^{k+1} \Big(\lVert \bm{\mu}^{k+1} - \bm{\lambda}^k \rVert_2^2 + \lVert \mathbf{y}^{k+1} - \mathbf{x}^{k+1} \rVert_2^2\Big).
\end{equation}
Since $\rho^{k+1} \leq \rho_5(\mathbf{x}^k) = \epsilon_5$, \eqref{eq: (f)} is also obtained.
\end{itemize}
\vspace{10pt}
\item To show
\begin{equation}\label{eq: (g)}
\text{(g)} \leq \epsilon_6 \Big(\lVert \bm{\lambda}^{k+1} - \bm{\mu}^{k+1} \rVert_2^2 + \lVert \mathbf{v}^{k+1} - \mathbf{u}^k \rVert_2^2\Big), 
\end{equation}
By letting $C = \left(\begin{array}{c}\mathbf{c}_1^T \\[-2pt] \vdots \\ \mathbf{c}_{m_2}^T\end{array}\right)$, we can rewrite
\begin{equation}
\text{(g)} = 2 \rho^{k+1} \Big[(\bm{\lambda}^{k+1} - \bm{\mu}^{k+1})^T C (\mathbf{v}^{k+1} - \mathbf{u}^k)\Big].
\end{equation}
Applying \eqref{eq: Young's Inequality}, we have that 
\begin{equation}
\begin{aligned}
\text{(g)} \leq &2 \rho^{k+1} \Big(\frac{1}{2 \delta^2} \lVert \bm{\lambda}^{k+1} - \bm{\mu}^{k+1} \rVert_2^2 + \frac{\delta^2}{2} \vertiii{C}_2^2 \lVert \mathbf{v}^{k+1} - \mathbf{u}^k \rVert_2^2\Big) \\
\leq &2 \rho^{k+1} \Big(\frac{1}{2 \delta^2} \lVert \bm{\lambda}^{k+1} - \bm{\mu}^{k+1} \rVert_2^2 + \frac{\delta^2}{2} \lVert C \rVert_F^2 \lVert \mathbf{v}^{k+1} - \mathbf{u}^k \rVert_2^2\Big).
\end{aligned}
\end{equation}
\begin{itemize}[label=$\bullet$]
\item If $\lVert C \rVert_F \not= 0$, then letting $\delta^2 = \frac{1}{\lVert C \rVert_F}$ yields
\begin{equation}
\text{(g)} \leq \rho^{k+1} \lVert C \rVert_F \Big(\lVert \bm{\lambda}^{k+1} - \bm{\mu}^{k+1} \rVert_2^2 + \lVert \mathbf{v}^{k+1} - \mathbf{u}^k \rVert_2^2\Big).
\end{equation}
Since $\rho^{k+1} \leq \rho_6 = \frac{\epsilon_6}{\lVert C \rVert_F}$, we obtain \eqref{eq: (g)}.
\vspace{4pt}
\item If $\lVert C \rVert_F = 0$, then letting $\delta^2 = 1$ yields
\begin{equation}
\text{(g)} \leq \rho^{k+1} \Big(\lVert \bm{\lambda}^{k+1} - \bm{\mu}^{k+1} \rVert_2^2 + \lVert \mathbf{v}^{k+1} - \mathbf{u}^k \rVert_2^2\Big).  
\end{equation}
Since $\rho^{k+1} \leq \rho_6 = \epsilon_6$, \eqref{eq: (g)} is also obtained.
\end{itemize}
\vspace{10pt}
\item Next, we want to show that 
\begin{equation}\label{eq: (h)}
\text{(h)} \leq \epsilon_6 \Big(\lVert \bm{\mu}^{k+1} - \bm{\lambda}^k \rVert_2^2 + \lVert \mathbf{v}^{k+1} - \mathbf{u}^{k+1} \rVert_2^2\Big).
\end{equation}
Similarly, we can rewrite
\begin{equation}
\text{(h)} = 2 \rho^{k+1} \Big[(\bm{\mu}^{k+1} - \bm{\lambda}^k)^T C (\mathbf{v}^{k+1} - \mathbf{u}^{k+1})\Big].
\end{equation}
Applying \eqref{eq: Young's Inequality} on the above equality leads to  
\begin{equation}
\begin{aligned}
\text{(h)} \leq &2 \rho^{k+1} \Big(\frac{1}{2 \delta^2} \lVert \bm{\mu}^{k+1} - \bm{\lambda}^k \rVert_2^2 + \frac{\delta^2}{2} \vertiii{C}_2^2 \lVert \mathbf{v}^{k+1} - \mathbf{u}^{k+1} \rVert_2^2\Big) \\
\leq &2 \rho^{k+1} \Big(\frac{1}{2 \delta^2} \lVert \bm{\mu}^{k+1} - \bm{\lambda}^k \rVert_2^2 + \frac{\delta^2}{2} \lVert C \rVert_F^2 \lVert \mathbf{v}^{k+1} - \mathbf{u}^{k+1} \rVert_2^2\Big).
\end{aligned}
\end{equation}
\begin{itemize}[label=$\bullet$]
\item If $\lVert C \rVert_F \not= 0$, then letting $\delta^2 = \frac{1}{\lVert C \rVert_F}$ yields
\begin{equation}
\text{(h)} \leq \rho^{k+1} \lVert C \rVert_F \Big(\lVert \bm{\mu}^{k+1} - \bm{\lambda}^k \rVert_2^2 + \lVert \mathbf{v}^{k+1} - \mathbf{u}^{k+1} \rVert_2^2\Big).  
\end{equation}
Since $\rho^{k+1} \leq \rho_6 = \frac{\epsilon_6}{\lVert C \rVert_F}$, we obtain \eqref{eq: (h)}.
\item If $\lVert C \rVert_F = 0$, then letting $\delta^2 = 1$ yields
\begin{equation}
\text{(h)} \leq \rho^{k+1} \Big(\lVert \bm{\mu}^{k+1} - \bm{\lambda}^k \rVert_2^2 + \lVert \mathbf{v}^{k+1} - \mathbf{u}^{k+1} \rVert_2^2\Big).  
\end{equation}
Since $\rho^{k+1} \leq \rho_6 = \epsilon_6$, \eqref{eq: (h)} is also obtained.
\end{itemize}
\vspace{10pt}
\item To show 
\begin{equation}\label{eq: (i)}
\text{(i)} \leq \epsilon_7 \Big(\lVert \bm{\gamma}^{k+1} - \bm{\nu}^{k+1} \rVert_2^2 + \lVert \mathbf{y}^{k+1} - \mathbf{x}^k \rVert_2^2\Big),
\end{equation}
we apply \eqref{eq: Young's Inequality} on the rewriting of (i), which leads to 
\begin{equation}
\begin{aligned}
\text{(i)} \leq &2 \rho^{k+1} \Big(\frac{1}{2 \delta^2} \lVert \bm{\gamma}^{k+1} - \bm{\nu}^{k+1} \rVert_2^2 + \frac{\delta^2}{2} \vertiii{A}_2^2 \lVert \mathbf{y}^{k+1} - \mathbf{x}^k \rVert_2^2\Big) \\
\leq &2 \rho^{k+1} \Big(\frac{1}{2 \delta^2} \lVert \bm{\gamma}^{k+1} - \bm{\nu}^{k+1} \rVert_2^2 + \frac{\delta^2}{2} \lVert A \rVert_F^2 \lVert \mathbf{y}^{k+1} - \mathbf{x}^k \rVert_2^2\Big).
\end{aligned}
\end{equation}
\begin{itemize}[label=$\bullet$]
\item If $\lVert A \rVert_F \not= 0$, then letting $\delta^2 = \frac{1}{\lVert A \rVert_F}$ yields
\begin{equation}
\text{(i)} \leq \rho^{k+1} \lVert A \rVert_F \Big(\lVert \bm{\gamma}^{k+1} - \bm{\nu}^{k+1} \rVert_2^2 + \lVert \mathbf{y}^{k+1} - \mathbf{x}^k \rVert_2^2\Big).
\end{equation}
Since $\rho^{k+1} \leq \rho_7 = \frac{\epsilon_7}{\lVert A \rVert_F}$, we obtain \eqref{eq: (i)}.
\vspace{4pt}
\item If $\lVert A \rVert_F = 0$, then letting $\delta^2 = 1$ yields
\begin{equation}
\text{(i)} \leq \rho^{k+1} \Big(\lVert \bm{\gamma}^{k+1} - \bm{\nu}^{k+1} \rVert_2^2 + \lVert \mathbf{y}^{k+1} - \mathbf{x}^k \rVert_2^2\Big).  
\end{equation}
Since $\rho^{k+1} \leq \rho_7 = \epsilon_7$, \eqref{eq: (i)} is also obtained.
\end{itemize}
\vspace{10pt}
\item Similarly, to show 
\begin{equation}\label{eq: (j)}
\text{(j)} \leq \epsilon_7 \Big(\lVert \bm{\nu}^{k+1} - \bm{\gamma}^k \rVert_2^2 + \lVert \mathbf{y}^{k+1} - \mathbf{x}^{k+1} \rVert_2^2\Big).
\end{equation}
we apply \eqref{eq: Young's Inequality} on the rewriting of (j), which yields 
\begin{equation}
\begin{aligned}
\text{(j)} \leq &2 \rho^{k+1} \Big(\frac{1}{2 \delta^2} \lVert \bm{\nu}^{k+1} - \bm{\gamma}^k \rVert_2^2 + \frac{\delta^2}{2} \vertiii{A}_2^2 \lVert \mathbf{y}^{k+1} - \mathbf{x}^{k+1} \rVert_2^2\Big) \\
\leq &2 \rho^{k+1} \Big(\frac{1}{2 \delta^2} \lVert \bm{\nu}^{k+1} - \bm{\gamma}^k \rVert_2^2 + \frac{\delta^2}{2} \lVert A \rVert_F^2 \lVert \mathbf{y}^{k+1} - \mathbf{x}^{k+1} \rVert_2^2\Big).
\end{aligned}
\end{equation}
\begin{itemize}[label=$\bullet$]
\item If $\lVert A \rVert_F \not= 0$, then letting $\delta^2 = \frac{1}{\lVert A \rVert_F}$ yields
\begin{equation}
\text{(j)} \leq \rho^{k+1} \lVert A \rVert_F \Big(\lVert \bm{\nu}^{k+1} - \bm{\gamma}^k \rVert_2^2 + \lVert \mathbf{y}^{k+1} - \mathbf{x}^{k+1} \rVert_2^2\Big).  
\end{equation}
Since $\rho^{k+1} \leq \rho_7 = \frac{\epsilon_7}{\lVert A \rVert_F}$, we obtain \eqref{eq: (j)}.
\vspace{4pt}
\item If $\lVert A \rVert_F = 0$, then letting $\delta^2 = 1$ yields
\begin{equation}
\text{(j)} \leq \rho^{k+1} \Big(\lVert \bm{\nu}^{k+1} - \bm{\gamma}^k \rVert_2^2 + \lVert \mathbf{y}^{k+1} - \mathbf{x}^{k+1} \rVert_2^2\Big).  
\end{equation}
Since $\rho^{k+1} \leq \rho_7 = \epsilon_7$, \eqref{eq: (j)} is also obtained.
\end{itemize}
\vspace{10pt}
\item Next, to show 
\begin{equation}\label{eq: (k)}
\text{(k)} \leq \epsilon_8 \Big(\lVert \bm{\gamma}^{k+1} - \bm{\nu}^{k+1} \rVert_2^2 + \lVert \mathbf{v}^{k+1} - \mathbf{u}^k \rVert_2^2\Big),
\end{equation}
we apply \eqref{eq: Young's Inequality} on the rewriting of (k): 
\begin{equation}
\begin{aligned}
\text{(k)} \leq &2 \rho^{k+1} \Big(\frac{1}{2 \delta^2} \lVert \bm{\gamma}^{k+1} - \bm{\nu}^{k+1} \rVert_2^2 + \frac{\delta^2}{2} \vertiii{B}_2^2 \lVert \mathbf{v}^{k+1} - \mathbf{u}^k \rVert_2^2\Big) \\
\leq &2 \rho^{k+1} \Big(\frac{1}{2 \delta^2} \lVert \bm{\gamma}^{k+1} - \bm{\nu}^{k+1} \rVert_2^2 + \frac{\delta^2}{2} \lVert B \rVert_F^2 \lVert \mathbf{v}^{k+1} - \mathbf{u}^k \rVert_2^2\Big).
\end{aligned}
\end{equation}
\begin{itemize}[label=$\bullet$]
\item If $\lVert B \rVert_F \not= 0$, then letting $\delta^2 = \frac{1}{\lVert B \rVert_F}$ yields
\begin{equation}
\text{(k)} \leq \rho^{k+1} \lVert B \rVert_F \Big(\lVert \bm{\gamma}^{k+1} - \bm{\nu}^{k+1} \rVert_2^2 + \lVert \mathbf{v}^{k+1} - \mathbf{u}^k \rVert_2^2\Big).
\end{equation}
Since $\rho^{k+1} \leq \rho_8 = \frac{\epsilon_8}{\lVert B \rVert_F}$, we obtain \eqref{eq: (k)}.
\vspace{4pt}
\item If $\lVert B \rVert_F = 0$, then letting $\delta^2 = 1$ yields
\begin{equation}
\text{(k)} \leq \rho^{k+1} \Big(\lVert \bm{\gamma}^{k+1} - \bm{\nu}^{k+1} \rVert_2^2 + \lVert \mathbf{v}^{k+1} - \mathbf{u}^k \rVert_2^2\Big).  
\end{equation}
Since $\rho^{k+1} \leq \rho_8 = \epsilon_8$, \eqref{eq: (k)} is also obtained.
\end{itemize}
\vspace{10pt}
\item Last, to show 
\begin{equation}\label{eq: (l)}
\text{(l)} \leq \epsilon_8 \Big(\lVert \bm{\nu}^{k+1} - \bm{\gamma}^k \rVert_2^2 + \lVert \mathbf{v}^{k+1} - \mathbf{u}^{k+1} \rVert_2^2\Big), 
\end{equation}
we apply \eqref{eq: Young's Inequality} on the rewriting of (l): 
\begin{equation}
\begin{aligned}
\text{(l)} \leq &2 \rho^{k+1} \Big(\frac{1}{2 \delta^2} \lVert \bm{\nu}^{k+1} - \bm{\gamma}^k \rVert_2^2 + \frac{\delta^2}{2} \vertiii{B}_2^2 \lVert \mathbf{v}^{k+1} - \mathbf{u}^{k+1} \rVert_2^2\Big) \\
\leq &2 \rho^{k+1} \Big(\frac{1}{2 \delta^2} \lVert \bm{\nu}^{k+1} - \bm{\gamma}^k \rVert_2^2 + \frac{\delta^2}{2} \lVert B \rVert_F^2 \lVert \mathbf{v}^{k+1} - \mathbf{u}^{k+1} \rVert_2^2\Big).
\end{aligned}
\end{equation}
\begin{itemize}[label=$\bullet$]
\item If $\lVert B \rVert_F \not= 0$, then letting $\delta^2 = \frac{1}{\lVert B \rVert_F}$ yields
\begin{equation}
\text{(l)} \leq \rho^{k+1} \lVert B \rVert_F \Big(\lVert \bm{\nu}^{k+1} - \bm{\gamma}^k \rVert_2^2 + \lVert \mathbf{v}^{k+1} - \mathbf{u}^{k+1} \rVert_2^2\Big).  
\end{equation}
Since $\rho^{k+1} \leq \rho_8 = \frac{\epsilon_8}{\lVert B \rVert_F}$, we obtain \eqref{eq: (l)}.
\vspace{4pt}
\item If $\lVert B \rVert_F = 0$, then letting $\delta^2 = 1$ yields
\begin{equation}
\text{(l)} \leq \rho^{k+1} \Big(\lVert \bm{\nu}^{k+1} - \bm{\gamma}^k \rVert_2^2 + \lVert \mathbf{v}^{k+1} - \mathbf{u}^{k+1} \rVert_2^2\Big).  
\end{equation}
Since $\rho^{k+1} \leq \rho_8 = \epsilon_8$, \eqref{eq: (l)} is also obtained.
\end{itemize}
\end{enumerate}
The summation of terms $\text{(a)}$ to $\text{(l)}$ can now be bounded as: 
\begin{equation}
\begin{aligned}
&\text{(a)} + \sum_{i=1}^{m_1} \text{(b)}_i + \text{(c)} + \text{(d)} + \text{(e)} + \text{(f)} + \text{(g)} + \text{(h)} + \text{(i)} + \text{(j)} + \text{(k)} + \text{(l)} \\
\leq &(\epsilon_1 + \epsilon_2 + \epsilon_4 + \epsilon_5 + \epsilon_7) \lVert \mathbf{y}^{k+1} - \mathbf{x}^{k+1} \rVert_2^2 \\
+ &(\epsilon_1 + \epsilon_2 + \epsilon_3 + \epsilon_4 + \epsilon_7) \lVert \mathbf{y}^{k+1} - \mathbf{x}^k \rVert_2^2 \\
+ &(\epsilon_6 + \epsilon_8) \lVert \mathbf{v}^{k+1} - \mathbf{u}^{k+1} \rVert_2^2 + (\epsilon_6 + \epsilon_8) \lVert \mathbf{v}^{k+1} - \mathbf{u}^k \rVert_2^2 \\
+ &(\epsilon_3 + \epsilon_4 + \epsilon_6) \lVert \bm{\mu}^{k+1} - \bm{\lambda}^{k+1} \rVert_2^2 + (\epsilon_4 + \epsilon_5 + \epsilon_6) \lVert \bm{\mu}^{k+1} - \bm{\lambda}^k \rVert_2^2 \\
+ &(\epsilon_7 + \epsilon_8) \lVert \bm{\nu}^{k+1} - \bm{\gamma}^{k+1} \rVert_2^2 + (\epsilon_7 + \epsilon_8) \lVert \bm{\nu}^{k+1} - \bm{\gamma}^k \rVert_2^2 \\
\leq &(\sum_{s=1}^8 \epsilon_s) \Big[\lVert \mathbf{y}^{k+1} - \mathbf{x}^{k+1} \rVert_2^2 + \lVert \mathbf{y}^{k+1} - \mathbf{x}^k \rVert_2^2 \\
&\hspace*{36pt} + \lVert \mathbf{v}^{k+1} - \mathbf{u}^{k+1} \rVert_2^2 + \lVert \mathbf{v}^{k+1} - \mathbf{u}^k \rVert_2^2 \\
&\hspace*{36pt} + \lVert \bm{\mu}^{k+1} - \bm{\lambda}^{k+1} \rVert_2^2 + \lVert \bm{\mu}^{k+1} - \bm{\lambda}^k \rVert_2^2 \\
&\hspace*{36pt} + \lVert \bm{\nu}^{k+1} - \bm{\gamma}^{k+1} \rVert_2^2 + \lVert \bm{\nu}^{k+1} - \bm{\gamma}^k \rVert_2^2\Big] \\
\leq &(1 - \epsilon_0) \Big[\lVert \mathbf{y}^{k+1} - \mathbf{x}^{k+1} \rVert_2^2 + \lVert \mathbf{y}^{k+1} - \mathbf{x}^k \rVert_2^2 \\
&\hspace*{36pt} + \lVert \mathbf{v}^{k+1} - \mathbf{u}^{k+1} \rVert_2^2 + \lVert \mathbf{v}^{k+1} - \mathbf{u}^k \rVert_2^2 \\
&\hspace*{36pt} + \lVert \bm{\mu}^{k+1} - \bm{\lambda}^{k+1} \rVert_2^2 + \lVert \bm{\mu}^{k+1} - \bm{\lambda}^k \rVert_2^2 \\
&\hspace*{36pt} + \lVert \bm{\nu}^{k+1} - \bm{\gamma}^{k+1} \rVert_2^2 + \lVert \bm{\nu}^{k+1} - \bm{\gamma}^k \rVert_2^2\Big].
\end{aligned}
\end{equation}
Substituting it back into \eqref{eq: adding two inequalities}, we have that for all $k \geq 0$, 
\begingroup
\begin{align}
&\lVert \mathbf{x}^{k+1} - \mathbf{x}^* \rVert_2^2 + \lVert \mathbf{u}^{k+1} - \mathbf{u}^* \rVert_2^2 + \lVert \bm{\lambda}^{k+1} - \bm{\lambda}^* \rVert_2^2 + \lVert \bm{\gamma}^{k+1} - \bm{\gamma}^* \rVert_2^2 \nonumber \\
\leq &\lVert \mathbf{x}^k - \mathbf{x}^* \rVert_2^2 + \lVert \mathbf{u}^k - \mathbf{u}^* \rVert_2^2 + \lVert \bm{\lambda}^k - \bm{\lambda}^* \rVert_2^2 + \lVert \bm{\gamma}^k - \bm{\gamma}^* \rVert_2^2 \nonumber \\
- &\epsilon_0 \Big[\lVert \mathbf{y}^{k+1} - \mathbf{x}^{k+1} \rVert_2^2 + \lVert \mathbf{y}^{k+1} - \mathbf{x}^k \rVert_2^2 + \lVert \mathbf{v}^{k+1} - \mathbf{u}^{k+1} \rVert_2^2 + \lVert \mathbf{v}^{k+1} - \mathbf{u}^k \rVert_2^2 \nonumber \\
+ &\lVert \bm{\mu}^{k+1} - \bm{\lambda}^{k+1} \rVert_2^2 + \lVert \bm{\mu}^{k+1} - \bm{\lambda}^k \rVert_2^2 + \lVert \bm{\nu}^{k+1} - \bm{\gamma}^{k+1} \rVert_2^2 + \lVert \bm{\nu}^{k+1} - \bm{\gamma}^k \rVert_2^2\Big], \label{eq: sequence inequality}
\end{align}    
\endgroup
which implies for all $k \geq 0$:
\begingroup
\begin{align}
0 \leq &\lVert \mathbf{x}^{k+1} - \mathbf{x}^* \rVert_2^2 + \lVert \mathbf{u}^{k+1} - \mathbf{u}^* \rVert_2^2 + \lVert \bm{\lambda}^{k+1} - \bm{\lambda}^* \rVert_2^2 + \lVert \bm{\gamma}^{k+1} - \bm{\gamma}^* \rVert_2^2 \nonumber \\
\leq &\lVert \mathbf{x}^k - \mathbf{x}^* \rVert_2^2 + \lVert \mathbf{u}^k - \mathbf{u}^* \rVert_2^2 + \lVert \bm{\lambda}^k - \bm{\lambda}^* \rVert_2^2 + \lVert \bm{\gamma}^k - \bm{\gamma}^* \rVert_2^2 \nonumber \\
\leq &\lVert \mathbf{x}^{k-1} - \mathbf{x}^* \rVert_2^2 + \lVert \mathbf{u}^{k-1} - \mathbf{u}^* \rVert_2^2 + \lVert \bm{\lambda}^{k-1} - \bm{\lambda}^* \rVert_2^2 + \lVert \bm{\gamma}^{k-1} - \bm{\gamma}^* \rVert_2^2 \nonumber \\
\leq &\cdots \leq \lVert \mathbf{x}^0 - \mathbf{x}^* \rVert_2^2 + \lVert \mathbf{u}^0 - \mathbf{u}^* \rVert_2^2 + \lVert \bm{\lambda}^0 - \bm{\lambda}^* \rVert_2^2 + \lVert \bm{\gamma}^0 - \bm{\gamma}^* \rVert_2^2.
\end{align}
\endgroup
It further implies that the sequence $\{\lVert \mathbf{x}^k - \mathbf{x}^* \rVert_2^2 + \lVert \mathbf{u}^k - \mathbf{u}^* \rVert_2^2 + \lVert \bm{\lambda}^k - \bm{\lambda}^* \rVert_2^2 + \lVert \bm{\gamma}^k - \bm{\gamma}^* \rVert_2^2\}$ is monotonically decreasing and bounded below by $0$; hence the sequence must be convergent to a limit, denoted by $\xi$:
\begin{equation}\label{eq: bounded below sequence convergence}
\lim_{k \to +\infty} \lVert \mathbf{x}^k - \mathbf{x}^* \rVert_2^2 + \lVert \mathbf{u}^k - \mathbf{u}^* \rVert_2^2 + \lVert \bm{\lambda}^k - \bm{\lambda}^* \rVert_2^2 + \lVert \bm{\gamma}^k - \bm{\gamma}^* \rVert_2^2 = \xi.
\end{equation}
Taking the limit on both sides of \eqref{eq: sequence inequality} yields:
\begin{equation}\label{eq: 8 sequence convergence}
\begin{aligned}
&\lim_{k \to +\infty} \lVert \mathbf{y}^{k+1} - \mathbf{x}^{k+1} \rVert_2^2 = 0, \quad &\lim_{k \to +\infty} \lVert \mathbf{y}^{k+1} - \mathbf{x}^k \rVert_2^2 = 0, \\
&\lim_{k \to +\infty} \lVert \mathbf{v}^{k+1} - \mathbf{u}^{k+1} \rVert_2^2 = 0, \quad &\lim_{k \to +\infty} \lVert \mathbf{v}^{k+1} - \mathbf{u}^k \rVert_2^2 = 0, \\
&\lim_{k \to +\infty} \lVert \bm{\mu}^{k+1} - \bm{\lambda}^{k+1} \rVert_2^2 = 0, \quad &\lim_{k \to +\infty} \lVert \bm{\mu}^{k+1} - \bm{\lambda}^k \rVert_2^2 = 0, \\
&\lim_{k \to +\infty} \lVert \bm{\nu}^{k+1} - \bm{\gamma}^{k+1} \rVert_2^2 = 0, \quad &\lim_{k \to +\infty} \lVert \bm{\nu}^{k+1} - \bm{\gamma}^k \rVert_2^2 = 0.
\end{aligned}
\end{equation}
Additionally, \eqref{eq: bounded below sequence convergence} also implies that $\{(\mathbf{x}^k, \mathbf{u}^k, \bm{\lambda}^k, \bm{\gamma}^k)\}$ is a bounded sequence, and there exists a sub-sequence $\{(\mathbf{x}^{k_j}, \mathbf{u}^{k_j}, \bm{\lambda}^{k_j}, \bm{\gamma}^{k_j})\}$ that converges to a limit point $(\mathbf{x}^{\infty}, \mathbf{u}^{\infty}, \bm{\lambda}^{\infty}, \bm{\gamma}^{\infty})$. We next show that the limit point is indeed a saddle point and is also the unique limit point of $\{(\mathbf{x}^k, \mathbf{u}^k, \bm{\lambda}^k, \bm{\gamma}^k)\}$. Given any $\mathbf{x} \in \mathbb{X}$ and $\mathbf{u} \in \mathbb{R}^{n_2}$, we have:
\begin{equation}
\begin{aligned}
&2 \rho^{k+1} \big[\mathcal{L}(\mathbf{x}^{k+1}, \mathbf{u}^{k+1}, \bm{\mu}^{k+1}, \bm{\nu}^{k+1}) - \mathcal{L}(\mathbf{x}, \mathbf{u}, \bm{\mu}^{k+1}, \bm{\nu}^{k+1})\big] \\
= &2 \rho^{k+1} \Big\{\big[\frac{1}{2} (\mathbf{x}^{k+1})^T P_0 \mathbf{x}^{k+1} - \frac{1}{2} \mathbf{x}^T P_0 \mathbf{x}\big] + \mathbf{q}_0^T (\mathbf{x}^{k+1} - \mathbf{x}) + \mathbf{c}_0^T (\mathbf{u}^{k+1} - \mathbf{u}) \\
+ &\sum_{i=1}^{m_1} \mu_i^{k+1} \big[\frac{1}{2} (\mathbf{x}^{k+1})^T P_i \mathbf{x}^{k+1} - \frac{1}{2} \mathbf{x}^T P_i \mathbf{x}\big] + \sum_{i=1}^{m_1} \mu_i^{k+1} \mathbf{q}_i^T (\mathbf{x}^{k+1} - \mathbf{x}) \\
+ &\sum_{i=1}^{m_1} \mu_i^{k+1} \mathbf{c}_i^T (\mathbf{u}^{k+1} - \mathbf{u})\\
+ &\bm{\nu}^{k+1} A(\mathbf{x}^{k+1} - \mathbf{x}) + \bm{\nu}^{k+1} B(\mathbf{u}^{k+1} - \mathbf{u})\Big\} \\
= &\underbrace{2 \rho^{k+1}\Big[\hspace*{-3pt}- \frac{1}{2}(\mathbf{x}^{k+1} - \mathbf{x})^T P_0 (\mathbf{x}^{k+1} - \mathbf{x}) - \hspace*{-3pt}\sum_{i=1}^{m_1} \mu_i^{k+1} \frac{1}{2}(\mathbf{x}^{k+1} - \mathbf{x})^T P_i (\mathbf{x}^{k+1} - \mathbf{x})\Big]}_{(\Delta)} \\
+ &2 \rho^{k+1} \Big[(P_0 \mathbf{x}^{k+1} + \mathbf{q}_0)^T (\mathbf{x}^{k+1} - \mathbf{x}) + \mathbf{c}_0^T (\mathbf{u}^{k+1} - \mathbf{u}) \\
+ &\sum_{i=1}^{m_1} \mu_i^{k+1} (P_i \mathbf{x}^{k+1} + \mathbf{q}_i)^T (\mathbf{x}^{k+1} - \mathbf{x}) + \sum_{i=1}^{m_1} \mu_i^{k+1} \mathbf{c}_i^T (\mathbf{u}^{k+1} - \mathbf{u}) \\
+ &\bm{\nu}^{k+1} A(\mathbf{x}^{k+1} - \mathbf{x}) + \bm{\nu}^{k+1} B(\mathbf{u}^{k+1} - \mathbf{u})\Big] \\
\leq &2 \rho^{k+1} \Big[(P_0 \mathbf{x}^{k+1} + \mathbf{q}_0)^T (\mathbf{x}^{k+1} - \mathbf{x}) + \mathbf{c}_0^T (\mathbf{u}^{k+1} - \mathbf{u}) \\
+ &\sum_{i=1}^{m_1} \mu_i^{k+1} (P_i \mathbf{x}^{k+1} + \mathbf{q}_i)^T (\mathbf{x}^{k+1} - \mathbf{x}) + \sum_{i=1}^{m_1} \mu_i^{k+1} \mathbf{c}_i^T (\mathbf{u}^{k+1} - \mathbf{u}) \\
+ &\bm{\nu}^{k+1} A(\mathbf{x}^{k+1} - \mathbf{x}) + \bm{\nu}^{k+1} B(\mathbf{u}^{k+1} - \mathbf{u})\Big].
\end{aligned}
\end{equation}
The positive semi-definiteness of each $P_i$ for all $i = 0, 1, \dots, m$ guarantees the non-positiveness of $(\Delta)$, which makes the last inequality hold. Applying Lemma~\ref{lem: Lemma1} on \eqref{eq: x_k+1} with $\widehat{\mathbf{z}} = (\mathbf{x}^{k+1}, \mathbf{u}^{k+1})$, $\bar{\mathbf{z}} = (\mathbf{x}^k, \mathbf{u}^k)$ and $\mathbf{z} = (\mathbf{x}, \mathbf{u})$ yields:
\begingroup
\begin{align}
&2 \rho^{k+1}\Big\{(P_0 \mathbf{y}^{k+1} + \mathbf{q}_0)^T \mathbf{x}^{k+1} + \mathbf{c}_0^T \mathbf{u}^{k+1} + r_0 \nonumber \\
&\hspace*{30pt} + \sum_{i=1}^{m_1} \mu_i^{k+1} \Big[(P_i \mathbf{y}^{k+1} + \mathbf{q}_i)^T \mathbf{x}^{k+1} + \mathbf{c}_i^T \mathbf{u}^{k+1} + r_i\Big] \nonumber \\
&\hspace*{30pt} + \bm{\nu}^{k+1} (A \mathbf{x}^{k+1} + B \mathbf{u}^{k+1} - \mathbf{b})\Big\} \nonumber \\
- &2 \rho^{k+1}\Big\{(P_0 \mathbf{y}^{k+1} + \mathbf{q}_0)^T \mathbf{x} + \mathbf{c}_0^T \mathbf{u} + r_0 \nonumber \\
&\hspace*{30pt} + \sum_{i=1}^m \mu_i^{k+1} \Big[(P_i \mathbf{y}^{k+1} + \mathbf{q}_i)^T \mathbf{x} + \mathbf{c}_i^T \mathbf{u} + r_i\Big] \nonumber \\
&\hspace*{30pt} + \bm{\nu}^{k+1} (A \mathbf{x} + B \mathbf{u} - \mathbf{b})\Big\} \nonumber \\
\leq &\ \lVert \mathbf{x}^k - \mathbf{x} \rVert_2^2 - \lVert \mathbf{x}^{k+1} - \mathbf{x} \rVert_2^2 - \lVert \mathbf{x}^{k+1} - \mathbf{x}^k \rVert_2^2 \nonumber \\
+ &\ \lVert \mathbf{u}^k - \mathbf{u} \rVert_2^2 - \lVert \mathbf{u}^{k+1} - \mathbf{u} \rVert_2^2 - \lVert \mathbf{u}^{k+1} - \mathbf{u}^k \rVert_2^2 \nonumber \\
\leq &\ \big(\lVert \mathbf{x}^k - \mathbf{x}^{k+1} \rVert_2^2 + \lVert \mathbf{x}^{k+1} - \mathbf{x} \rVert_2^2\big) - \lVert \mathbf{x}^{k+1} - \mathbf{x} \rVert_2^2 - \lVert \mathbf{x}^{k+1} - \mathbf{x}^k \rVert_2^2 \nonumber \\
+ &\ \big(\lVert \mathbf{u}^k - \mathbf{u}^{k+1} \rVert_2^2 + \lVert \mathbf{u}^{k+1} - \mathbf{u} \rVert_2^2\big) - \lVert \mathbf{u}^{k+1} - \mathbf{u} \rVert_2^2 - \lVert \mathbf{u}^{k+1} - \mathbf{u}^k \rVert_2^2 = 0.
\end{align}
\endgroup
Adding the above two inequalities yields
\begingroup
\begin{align}
&2 \rho^{k+1} \Big[\mathcal{L}(\mathbf{x}^{k+1}, \mathbf{u}^{k+1}, \bm{\mu}^{k+1}, \bm{\nu}^{k+1}) - \mathcal{L}(\mathbf{x}, \mathbf{u}, \bm{\mu}^{k+1}, \bm{\nu}^{k+1})\Big] \nonumber \\
+ &2 \rho^{k+1} \Big\{(\mathbf{y}^{k+1} - \mathbf{x}^{k+1})^T P_0 (\mathbf{x}^{k+1} - \mathbf{x}) \nonumber \\
&\hspace*{30pt} + \sum_{i=1}^m \mu_i^{k+1} \big[(\mathbf{y}^{k+1} - \mathbf{x}^{k+1})^T P_i (\mathbf{x}^{k+1} - \mathbf{x})\big]\Big\} \leq 0.
\end{align}
\endgroup
Taking the limits over an appropriate sub-sequence $\{k_j\}$ on both sides and using \eqref{eq: 8 sequence convergence}, we have:
\begin{equation}
\mathcal{L}(\mathbf{x}^{\infty}, \mathbf{u}^{\infty}, \bm{\lambda}^{\infty}, \bm{\gamma}^{\infty}) \leq \mathcal{L}(\mathbf{x}, \mathbf{u}, \bm{\lambda}^{\infty}, \bm{\gamma}^{\infty}), \quad \forall \mathbf{x} \in \mathbb{X}, \forall \mathbf{u} \in \mathbb{R}^{n_2}.
\end{equation}
Similarly, given any $\bm{\lambda} \in \mathbb{R}_+^{m_1}$ and $\bm{\gamma} \in \mathbb{R}^{m_2}$, applying Lemma~\ref{lem: Lemma1} on \eqref{eq: lambda_k+1} with $\widehat{\mathbf{z}} = (\bm{\lambda}^{k+1}, \bm{\gamma}^{k+1})$, $\bar{\mathbf{z}} = (\bm{\lambda}^k, \bm{\gamma}^k)$ and $\mathbf{z} = (\bm{\lambda}, \bm{\gamma})$ yields
\begingroup
\begin{align}
&2 \rho^{k+1}\big[\mathcal{L}(\mathbf{y}^{k+1}, \mathbf{v}^{k+1}, \bm{\lambda}, \bm{\gamma}) - \mathcal{L}(\mathbf{y}^{k+1}, \mathbf{v}^{k+1}, \bm{\lambda}^{k+1}, \bm{\gamma}^{k+1})\big] \nonumber \\
\leq &\ \lVert \bm{\lambda}^k - \bm{\lambda} \rVert_2^2 - \lVert \bm{\lambda}^{k+1} - \bm{\lambda} \rVert_2^2 - \lVert \bm{\lambda}^{k+1} - \bm{\lambda}^k \rVert_2^2 \nonumber \\
+ &\ \lVert \bm{\gamma}^k - \bm{\gamma} \rVert_2^2 - \lVert \bm{\gamma}^{k+1} - \bm{\gamma} \rVert_2^2 - \lVert \bm{\gamma}^{k+1} - \bm{\gamma}^k \rVert_2^2 \nonumber \\
\leq &\ \big(\lVert \bm{\lambda}^k - \bm{\lambda}^{k+1} \rVert_2^2 - \lVert \bm{\lambda}^{k+1} + \bm{\lambda} \rVert_2^2\big) - \lVert \bm{\lambda}^{k+1} - \bm{\lambda} \rVert_2^2 - \lVert \bm{\lambda}^{k+1} - \bm{\lambda}^k \rVert_2^2 \nonumber \\
+ &\ \big(\lVert \bm{\gamma}^k - \bm{\gamma}^{k+1} \rVert_2^2 - \lVert \bm{\gamma}^{k+1} + \bm{\gamma} \rVert_2^2\big) - \lVert \bm{\gamma}^{k+1} - \bm{\gamma} \rVert_2^2 - \lVert \bm{\gamma}^{k+1} - \bm{\gamma}^k \rVert_2^2 = 0.
\end{align}
\endgroup
Taking the limits over an appropriate sub-sequence $\{k_j\}$ on both sides and using \eqref{eq: 8 sequence convergence}, we have:
\begin{equation}
\mathcal{L}(\mathbf{x}^{\infty}, \mathbf{u}^{\infty}, \bm{\lambda}, \bm{\gamma}) \leq \mathcal{L}(\mathbf{x}^{\infty}, \mathbf{u}^{\infty}, \bm{\lambda}^{\infty}, \bm{\gamma}^{\infty}), \quad \forall \bm{\lambda} \in \mathbb{R}_+^{m_1}, \forall \bm{\gamma} \in \mathbb{R}^{m_2}.
\end{equation}
Therefore, we show that $(\mathbf{x}^{\infty}, \mathbf{u}^{\infty}, \bm{\lambda}^{\infty}, \bm{\gamma}^{\infty})$ is indeed a saddle point of the Lagrangian function $\mathcal{L}(\mathbf{x}, \mathbf{u}, \bm{\lambda}, \bm{\gamma})$. Then \eqref{eq: bounded below sequence convergence} implies that
\begin{equation}
\lim_{k \to +\infty} \lVert \mathbf{x}^k - \mathbf{x}^{\infty} \rVert_2^2 + \lVert \mathbf{u}^k - \mathbf{u}^{\infty} \rVert_2^2 + \lVert \bm{\lambda}^k - \bm{\lambda}^{\infty} \rVert_2^2 + \lVert \bm{\gamma}^k - \bm{\gamma}^{\infty} \rVert_2^2 = \xi.
\end{equation}
Since we have argued (after Eq. \eqref{eq: 8 sequence convergence}) that there exists a bounded sequence of $\{(\mathbf{x}^k, \mathbf{u}^k, \bm{\lambda}^k, \bm{\gamma}^k)\}$ that converges to $(\mathbf{x}^{\infty}, \mathbf{u}^{\infty}, \bm{\lambda}^{\infty}, \bm{\gamma}^{\infty})$; that is, there exists $\{k_j\}$ such that $\lim_{k_j \to +\infty} \lVert \mathbf{x}^{k_j} - \mathbf{x}^{\infty} \rVert_2^2 + \lVert \mathbf{u}^{k_j} - \mathbf{u}^{\infty} \rVert_2^2 + \lVert \bm{\lambda}^{k_j} - \bm{\lambda}^{\infty} \rVert_2^2 + \lVert \bm{\gamma}^{k_j} - \bm{\gamma}^{\infty} \rVert_2^2 = 0$, which then  implies that $\xi = 0$. Therefore, we show that $\{(\mathbf{x}^k, \mathbf{u}^k, \bm{\lambda}^k, \bm{\gamma}^k)\}$ converges globally to a saddle point $(\mathbf{x}^{\infty}, \mathbf{u}^{\infty}, \bm{\lambda}^{\infty}, \bm{\gamma}^{\infty})$. 
\par \hfill$\Box$
\end{appendices}
\end{document}